\begin{document}



\title{A Counterpart to Nagata Idealization}


\author{Bruce Olberding}

\address{Department of Mathematical Sciences, New Mexico State University,
Las Cruces, NM 88003-8001}

\begin{abstract}
Idealization of a module $K$ over a commutative ring $S$  produces a ring having $K$ as an ideal, all of whose elements are nilpotent.  We develop a method that under suitable field-theoretic conditions produces from an $S$-module $K$ and derivation $D:S\rightarrow K$ a subring $R$ of $S$ that behaves like the idealization of $K$ but is such that when $S$ is a domain, so  is $R$.  The ring $S$ is contained in the normalization of $R$ but is finite over $R$ only when $R = S$.  We determine conditions under which $R$ is Noetherian, Cohen-Macaulay, Gorenstein, a complete intersection or a hypersurface.  When $R$ is local, then its ${\bf m}$-adic completion is the idealization of the ${\bf m}$-adic completions of $S$ and $K$.         
\end{abstract}



\maketitle

\newtheorem{thm}{Theorem}[section]
\newtheorem{lem}[thm]{Lemma}
\newtheorem{prop}[thm]{Proposition}
\newtheorem{cor}[thm]{Corollary}
\newtheorem{exmp}[thm]{Example}
\newtheorem{rem}[thm]{Remark}
\newtheorem{ques}[thm]{Question}
\newtheorem{defn}[thm]{Definition}

\input amssym.def

\newcommand{\ilim}{\mathop{\varinjlim}\limits}

\def\Jac{\mbox{\rm Jac$\:$}}
\def\Nil{\mbox{\rm Nil$\:$}}
\def\wt{\widetilde}
\def\Q{\mathcal{Q}}
\def\O{\mathcal{O}}
\def\ff{\frak}
\def\Spec{\mbox{\rm Spec}}
\def\ZS{\mbox{\rm ZS}}
\def\wB{\mbox{\rm wB}}
\def\type{\mbox{ type}}
\def\Hom{\mbox{ Hom}}
\def\rank{\mbox{ rank}}
\def\Ext{\mbox{ Ext}}
\def\Ker{\mbox{ Ker }}
\def\Max{\mbox{\rm Max}}
\def\End{\mbox{\rm End}}
\def\ord{\mbox{\rm ord}}
\def\l{\langle\:}
\def\r{\:\rangle}
\def\Rad{\mbox{\rm Rad}}
\def\Zar{\mbox{\rm Zar}}
\def\Supp{\mbox{\rm Supp}}
\def\Rep{\mbox{\rm Rep}}
\def\cal{\mathcal}
\def\p{{\rm{p}}}

\def\gen{\mbox{\rm gen$\:$}}
\def\Jac{\mbox{\rm Jac$\:$}}
\def\Nil{\mbox{\rm Nil$\:$}}
\def\ord{\mbox{\rm ord}}
\def\wt{\widetilde}
\def\Q{\mathcal{Q}}
\def\ff{\frak}
\def\Spec{\mbox{\rm Spec}}
\def\ZS{\mbox{\rm ZS}}
\def\wB{\mbox{\rm wB}}
\def\type{\mbox{ type}}
\def\Hom{{\rm Hom}}
\def\depth{{\rm depth}}
\def\Gen{{\rm Gen}}
\def\rank{\mbox{\rm rank}}
\def\Ext{{\rm Ext}}
\def\embdim{{\mbox{\rm emb.dim } }}
\def\length{{\mbox{\rm length }}}
\def\Ker{\mbox{\rm Ker }}
\def\pd{{\rm pd}}
\def\gr{{\rm gr}}
\def\Max{\mbox{\rm Max}}
\def\End{\mbox{\rm End}}
\def\l{\langle\:}
\def\r{\:\rangle}
\def\Rad{\mbox{\rm Rad}}
\def\Zar{\mbox{\rm Zar}}
\def\Supp{\mbox{\rm Supp}}
\def\Rep{\mbox{\rm Rep}}
\def\cal{\mathcal}
\def\p{{\rm{p}}}
 



\section{Introduction}


All rings in this article are commutative and have an identity.  Let $A$ be a ring, and let $L$ be an $A$-module.  The {\it idealization}, or {\it trivialization}, of the module $L$ is the ring $A \star L$, which is defined as an abelian group  
 to be $A \oplus L$, and whose ring multiplication is given by $(a_1,\ell_1)\cdot(a_2,\ell_2) = (a_1a_2,a_1\ell_2 +a_2\ell_1)$ for all $a_1,a_2 \in A$, $\ell_1,\ell_2 \in L$.  In particular, $(0,\ell_1) \cdot (0, \ell_2) = (0,0)$, and hence the $A$-module $L$ is encoded into $A \star L$ as an ideal whose square is zero.  
 Nagata introduced idealization in \cite{Na} to deduce primary decomposition of Noetherian modules from the primary decomposition of ideals in Noetherian rings.  Among other positive uses of idealization are arguments for smoothness and the determination of when a Cohen-Macaulay ring admits a  canonical module (\cite[Section 25]{Ma} and \cite{Reiten}, respectively). But idealization also serves as a flexible source of examples in commutative ring theory, since it allows one to create a ring having an ideal which reflects the structure of a well-chosen module.  However, as is clear from the construction, idealization introduces nilpotent elements, and hence the construction does not produce domains.  
 
In this article we develop a construction of rings that behaves analytically like the idealization of a module.  The structure of these rings $R$ is determined entirely by an overring $S$ of $R$ and an $S$-module $K$, and when $S$ is a domain, so is $R$.  The   extension $R \subseteq S$ is integral, but  is finite only if $R = S$.     Moreover, for certain elements $r$ in the ring $R$, $R/rR$ is  isomorphic to $S/rS \star K/rK$, so that in sufficiently small neighborhoods, $R$ is  actually an idealization of $S$ and $K$.  There are enough such elements $r$ so that if $R$ and $S$ are quasilocal with finitely generated maximal ideals, then $\widehat{R}$ is isomorphic to $\widehat{S} \star \widehat{K}$, where $\widehat{R}$, $\widehat{S}$ and $\widehat{K}$ are completions  in relevant ${\bf m}$-adic topologies.  Thus when the ring $R$ produced by the construction is a local Noetherian ring, it is analytically ramified, with ramification given in a clear way.

The construction of the ring $R$ from $S$ and $K$ is as a ring of ``anti-derivatives'' for a special sort of derivation from a localization of $S$ to a corresponding localization of $K$:  

\begin{defn} {\em Let $S$ be a ring, let $K$ be an $S$-module, and let $C$ be a multiplicatively closed subset of nonzerodivisors of $S$ that are also nonzerodivisors on $K$.  Then a subring $R$ of $S$ is {\it twisted by $K$ along $C$} if there is a $C$-linear derivation $D:S_C \rightarrow K_C$ such that:
(a)  $R = S \cap D^{-1}(K)$,
(b) $D(S_C)$ generates $K_C$ as an $S_C$-module, and
(c)  $S \subseteq  \Ker D + cS$ for all $c \in C$.
We  say that {\it $D$ twists $R$ by $K$ along $C$.} }
\end{defn}
 
 Note that for any derivation $D$ from a ring into a module $L$, the preimage $D^{-1}(K)$, with $K$ a submodule of $L$, is a ring. In particular, $\Ker D$ is a ring.  
We use the term ``twisted'' to  draw a loose analogy with the  notion of the twist of a graded module or ring.  In our case, however, rather than twist, or shift, a grading by an index, we twist the ring $S$ by a module $K$ to create $R$.   This shifting is illustrated, to name just a few instances, by how $K$ shifts the Hilbert function of ideals of $R$ (see \cite{OlbSub}), and also how $K$ ramifies the completion of $R$.
Given $S$, $K$ and $C$, whether such a subring $R$ and derivation  $D$ exist is in general not easy to determine, and we address this in Section 3.  It is condition (c) that proves hard to satisfy.  By contrast, condition (b) can be arranged by choosing $K$ so that $K_C$ is the $S_C$-submodule of the target of the derivation generated by $D(S_C)$, while (a) can be satisfied simply by assigning $R$ to be $S \cap D^{-1}(K)$.

We are specifically interested in when this construction produces Noetherian rings, and a stronger, absolute version of the notion is useful for this: 

\begin{defn} {\em Let $S$ be a domain with quotient field $F$, and let $R$ be a subring of $S$.  Let $K$ be a torsion-free $S$-module, and let $FK$ denote the divisible hull $F \otimes_S K$ of $K$.  We say that
{\it $R$ is  strongly twisted by  $K$} if there is a derivation $D:F \rightarrow FK$ such that:
(a)  $R = S \cap D^{-1}(K)$,
(b) 
 $D(F)$ generates $FK$ as an $F$-vector space,
and
(c) $S \subseteq  \Ker D + sS$ for all $0\ne s \in S$.
We say that {\it $D$ strongly twists $R$ by $K$.}}
 \end{defn}
 
  It is straightforward to check that strongly twisted implies twisted along the multiplicatively closed set  
 $C=( S \cap \Ker D) \setminus \{0\}$.
 In Theorem~\ref{existence strongly twisted cor}, we prove that when $F$ is a field  of positive characteristic that is separably generated of infinite transcendence degree over a countable subfield, then for any domain $S$ having quotient field $F$ and torsion-free $S$-module $K$ of at most countable rank, there 
 exists a subring $R$ of $S$ that is strongly twisted by $K$.
Granted existence, we show in Theorem~\ref{Twisted Noetherian subrings} that if $R$ is a subring of a domain $S$ that is strongly twisted by a torsion-free $S$-module $K$, then $R$ is Noetherian if and only if $S$ is Noetherian and certain homomorphic images of $K$ are finitely generated.  In particular, if $S$ is a Noetherian ring and $K$ is a finitely generated torsion-free $S$-module, then $R$ is Noetherian.  Along with the above existence result, this guarantees there are interesting  examples which reflect in various ways  the natures of $S$ and $K$.    

The original idea of using pullbacks of derivations to construct Noetherian rings  is due to Ferrand and Raynaud, who used it to produce three important examples:  a one-dimensional local Noetherian domain $D$ whose
completion when tensored with the quotient field of $D$ is not a
Gorenstein ring  (in other words, its generic formal fiber  is not Gorenstein); a
 two-dimensional local Noetherian domain whose completion has embedded
 primes; and
a three-dimensional local Noetherian domain $R$ such that  the set
of prime ideals $P$ of $R$ with $R_P$ a Cohen-Macaulay ring  is not
an open subset of Spec$(R)$ \cite{FR}.  This last example was in fact constructed using the two-dimensional ring obtained in the second example, so   the construction is known only to produce examples in Krull dimension 1 and 2.  The method of Ferrand and Raynaud was further abstracted and improved on by 
Goodearl and Lenagan in the article \cite{GL}, but again, only examples of dimension 1 and 2 were produced.  
Our variation on these ideas  produces Noetherian rings  without restriction on dimension.    By developing the construction  generally without much concern for the Noetherian case, we  create more theoretical space for examples than the
arguments of Ferrand and Raynaud permit.  (For
example, the construction of Ferrand and Raynaud requires {\it a priori}
that the pullback $R$ of the derivation is Noetherian, a
condition that can be hard to verify and one which seems to be the main obstacle to producing more examples with their method.)  For more applications of some of these ideas to the case of dimension $1$, see \cite{OlbAR}.


In later sections we use some elementary facts about derivations to prove many of our results.  Let $S$ be a ring, and let $L$ be an $S$-module.  A mapping   $D:S \rightarrow L$ is a {\it  derivation} if for all $s,t \in S$,  $D(s+t) = D(s) + D(t)$ and 
 $D(st) = sD(t) + tD(s)$.  If also $A$ is a subset of $S$ with $D(A) = 0$, then $D$ is an {\it $A$-linear derivation.} 
  The main properties of derivations we need are collected in (1.3).

\smallskip

{\bf (1.3)}  {\it The module $\Omega_{S/A}$ of K\"ahler differentials}.  Let $S$ be a ring and let $A$ be a subring of $S$.  There exists an $S$-module
 $\Omega_{S/A}$   and an $A$-linear derivation $d_{S/A}:S \rightarrow \Omega_{S/A}$, such that for every
  derivation $D:S \rightarrow L$, there exists a unique $S$-module
  homomorphism $\alpha:\Omega_{S/A} \rightarrow L$ such that  $D =
  \alpha \circ d_{S/A}$; see for example, \cite[pp.~191-192]{Ma}.  
  The actual construction of $\Omega_{S/A}$ is not needed here, but we do use the fact that 
   the image of $d_{S/A}$ in
  $\Omega_{S/A}$ generates $\Omega_{S/A}$ as an $S$-module \cite[Remark 1.21]{Kunz}.
The $S$-module $\Omega_{S/A}$  is the {\it module of K\"ahler differentials}
 of the
 ring extension $A \subseteq S$, and the derivation  $d_{S/A}:S \rightarrow \Omega_{S/A}$ is the {\it  exterior differential} \index{exterior differential} of $A \subseteq S$.
 
\smallskip


\smallskip

We see in Theorem~\ref{pre-construction c}
that when $R$ is a twisted subring of $S$, then $R \subseteq S$ is a special sort of integral extension, which in \cite{OlbGFF} is termed  a ``quadratic'' extension:

\smallskip

{\bf (1.4)}  {\it Quadratic extensions.}
An extension $R \subseteq S$ is {\it quadratic} if every $R$-submodule of $S$ containing $R$ is a ring; equivalently, $st \in sR + tR + R$ for all $s,t \in S$.  In \cite[Lemma 3.2]{OlbGFF}, the following characterization is given for quadratic extensions in the sort of context we consider in this article.  Let 
 $R \subseteq S$ be an extension of rings,  and suppose there is  a multiplicatively closed subset $C$ of $R$ consisting of nonzerodivisors in $S$ such that every element of $S/R$ is annihilated by some element of $C$ and $S = R + cS$ for all $c \in C$.  (In the next section we will express these two properties by saying that $S/R$ is $C$-torsion and $C$-divisible.)    Then  $R \subseteq S$ is a quadratic extension if and only  there exists an
$S$-module $T$ and a  derivation $D:S \rightarrow T$ with  $R =
\Ker D$; if and only if $S/R$ admits an $S$-module structure extending the $R$-module structure on $S/R$.

\section{Analytic extensions}

In this section we introduce the notion of an analytic extension and show that twisted subrings are couched in such extensions, a fact that we rely heavily on  in later sections.  In framing the definition, and throughout this article, we use the following terminology.  
Let $S$ be a ring, let $L$ be an $S$-module and let $C$ be a multiplicatively closed subset of $S$ consisting of nonzerodivisors of $S$.  The module $L$ is {\it $C$-torsion} 
 provided that for each $\ell \in L$, there exists $c \in C$ with $c\ell =0$; it is {\it $C$-torsion-free} if the only $C$-torsion element is $0$.  The module $L$ is {\it $C$-divisible} if for each $c \in C$ and $\ell \in L$, there exists $\ell' \in L$ such that $\ell = c\ell'$.

  Following Weibel in \cite{Wei}, and as developed in \cite{OlbAR}, we use the following notion:
  
  \begin{defn} {\em Let  $\alpha:A \rightarrow S$ be a homomorphism of $A$-algebras, and let $C$ be a multiplicatively closed subset of $A$ such that the elements of $\alpha(C)$ are nonzerodivisors of $S$.  
 Then $\alpha$ is an {\it analytic isomorphism along $C$} if for each $c \in C$, the induced mapping $\alpha_c:A/cA \rightarrow S/cS:a \mapsto \alpha(a) + cS$ is an isomorphism.  When $A$ is a subring of $S$ and the mapping $\alpha$ is the inclusion mapping, we say that $A \subseteq S$ is a {\it $C$-analytic extension}.}
 \end{defn}

   It follows that the mapping $\alpha$ is analytic along $C$ if and only if $S/\alpha(A)$ is $C$-torsion-free and $C$-divisible.    For example, if $A$ is a ring and $X$ is an indeterminate for $A$, then $A[X] \subseteq A[[X]]$ is $C$-analytic with respect to $C = \{X^i:i>0\}$.  Similarly, if $A$ is a DVR with completion $\widehat{A}$, then $A[X] \subseteq \widehat{A}[X]$ is $C$-analytic for $C = \{t^i:i>0\}$, where $t$ is a generator of the maximal ideal of $A$.  
  
  We also consider a stronger condition:
  
  \begin{defn} {\em When $A$ and $S$ are domains,   the map $\alpha$ is a {\it strongly analytic isomorphism} if $sS \cap \alpha(A) \ne 0$ for all $0 \ne s \in S$ and $\alpha$ is an analytic isomorphism along $C = A \setminus \{0\}$.   When $A \subseteq S$ and the inclusion mapping is a strongly analytic isomorphism, then $A \subseteq S$ is a {\it strongly analytic extension}.}
  \end{defn}

 It follows that $A \subseteq S$ is a strongly analytic extension if and only if $S/A$ is a torsion-free divisible $A$-module and $P \cap A \ne 0$ for all nonzero prime ideals $P$ of $S$.  The latter condition asserts that the generic fiber of $\Spec(S) \rightarrow \Spec(A)$ is trivial.  Thus, following Heinzer, Rotthaus and Wiegand in \cite{HRW}, we say that the extension $A \subseteq S$ has {\it trivial generic fiber (TGF)}.    An immediate extension of DVRs is easily seen to be strongly analytic, but  examples of strongly analytic extensions of Noetherian rings in higher dimensions are harder to find.  One of the main goals of Section 3 is to give existence results for such extensions.

\begin{rem} \label{old remark} {\em It is straightforward to verify that an extension of rings $A \subseteq S$ is $C$-analytic, where $C$ is a multiplicatively closed subset of $A$ consisting of nonzerodivisors of  $S$, if and only if $S_C = A_C +S$ and $A = S \cap A_C$.  Moreover, if $A \subseteq S$ is an extension of domains with quotient fields $Q$ and $F$, respectively, then $A \subseteq S$ is strongly analytic if and only if $F = Q +S$ and $A = S \cap Q$.}  
\end{rem}


The following basic proposition  shows that for a $C$-analytic
extension $A \subseteq S$,  the ideals of $A$ and $S$ meeting $C$
are related in a  transparent way.

\begin{prop} \label{basic cor}  Let $A \subseteq S$ be an extension of rings, and let $C$ be a multiplicatively closed subset of $A$ consisting of nonzerodivisors of $S$. Suppose that $A \subseteq S$ is a  ${{\it C}}$-analytic extension.  Then:

\begin{itemize}

\index{Canalytic@$C$-analytic extension!and ideals}

\item[{(1)}]  The mappings $I \mapsto IS$ and $J \mapsto J \cap A$ yield a
one-to-one correspondence between ideals $I$ of $A$ meeting ${{\it
C}}$ and ideals $J$ of $S$ meeting ${{\it C}}$. Prime ideals of $A$
meeting ${{\it C}}$ correspond to prime ideals of $S$ meeting ${{\it
C}}$, and maximal ideals of $A$ meeting $C$ correspond to maximal
ideals of $S$ meeting $C$.

\item[{(2)}]  If $J$ is a finitely generated ideal of $S$ meeting ${{\it C}}$ that can be generated
by $n$ elements, then $J \cap A$ can be generated by $n+1$ elements.
If also $A$  is quasilocal, then $J \cap A$ can be
generated by $n$ elements.

\end{itemize}

\end{prop}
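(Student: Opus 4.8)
The plan is to exploit the characterization of $C$-analytic extensions from Remark~\ref{old remark}, namely that $S_C = A_C + S$ and $A = S \cap A_C$, together with the fact that $S_C = A_C$ (since $S \subseteq A_C$ forces $S_C \subseteq A_C$, and the reverse is automatic). So $A \subseteq S$ sits inside the common localization $A_C = S_C$, and localizing at $C$ identifies the two rings. For part (1), given an ideal $I$ of $A$ meeting $C$, I would first check $IS \cap A = I$: the inclusion $\supseteq$ is clear, and for $\subseteq$, if $a \in IS \cap A$ then $a \in IS_C = IA_C$, so $a = i/c$ for some $i \in I$, $c \in C$; picking $c \in I \cap C$ shows $a \in S \cap IA_C$. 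More cleanly, since $I$ meets $C$ we have $IS_C = I_C$, and $IS \cap A = (IS)_C \cap A = I_C \cap A = I$ using $A = S \cap A_C$ applied to the ideal $I$ in place of $A$ (an $A_C$-submodule argument). Conversely, for an ideal $J$ of $S$ meeting $C$, I must show $(J \cap A)S = J$: again $J$ meeting $C$ gives $J = JS_C \cap S$, and $JS_C = J_C = (J \cap A)A_C$ because $J \cap A$ also meets $C$ (it contains $J \cap C \supseteq$ some power relation), whence $(J\cap A)S$ localizes to $J_C$ and is recovered as $J$. The prime and maximal correspondences then follow from the standard fact that $\Spec$ of a localization embeds as the primes meeting the complement, transported through the isomorphisms $A/cA \cong S/cS$: a prime $P$ of $A$ meeting $C$ has $A/P$ with a unit image of some $c$, and $P \mapsto PS$ preserves the quotient structure via $\alpha_c$, so primality and maximality are preserved in both directions.

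For part (2), suppose $J = (s_1, \dots, s_n)S$ meets $C$, say $c \in J \cap C$. Write $c = \sum_{i=1}^n t_i s_i$ with $t_i \in S$. The claim is that $J \cap A = (c, a_1, \dots, a_n)A$ where each $a_i \in A$ is chosen so that $a_i \equiv s_i \bmod cS$ — such $a_i$ exist precisely because $\alpha_c : A/cA \to S/cS$ is surjective. I would verify: the $a_i$ and $c$ lie in $J \cap A$ (each $a_i = s_i + c x_i$ for some $x_i \in S$, and $s_i \in J$, $c \in J$, so $a_i \in J$; and $a_i \in A$ by construction); conversely, given $b \in J \cap A$, reduce mod $c$: in $S/cS$, $b \equiv \sum_i u_i s_i \equiv \sum_i u_i a_i$ for some $u_i \in S$, and because $\alpha_c$ is an isomorphism we may take $u_i \in A$, so $b - \sum_i u_i a_i \in cS \cap A = cA$ (using $A = S \cap A_C$ again, or directly $cS \cap A = cA$ from $C$-analyticity), giving $b \in (c, a_1, \dots, a_n)A$. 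For the quasilocal refinement: if $A$ is quasilocal with maximal ideal $\mathbf{m}$, then either $c$ is a unit — in which case $J \cap A = A$ is generated by $1 \le n$ elements (and $n \ge 1$ since $J \ne 0$) — or $c \in \mathbf{m}$, and then in the generating set $(c, a_1, \dots, a_n)$ one uses that $c = \sum t_i s_i$ lifts to a relation mod $\mathbf{m}$-nothing; more carefully, since $c \equiv \sum_i \bar t_i a_i$ can be arranged modulo a suitable power, one shows $c$ is redundant: $c \in (a_1, \dots, a_n)A + cJ'$-type Nakayama argument. The cleanest route is to observe $c \in (a_1,\dots,a_n)A + c\mathbf{m}$ would suffice, but this needs the lift of the relation $c = \sum t_i s_i$; instead I would argue that $J \cap A$ is generated by $n$ elements because $(J \cap A)/c(J\cap A)$, or rather the minimal number of generators, is controlled by $J/cJ \cong$ image under $\alpha_c^{-1}$, which needs $n$ generators, and then Nakayama over the quasilocal $A$ lifts a minimal generating set of $n$ elements.

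The main obstacle I expect is the quasilocal refinement in part (2): showing the extra generator $c$ can be dropped. The non-local bound $n+1$ is a clean lifting argument through $\alpha_c$, but eliminating $c$ requires knowing that the relation expressing $c$ in terms of the $s_i$ can itself be pulled back — or, equivalently, applying Nakayama's lemma to conclude that $J \cap A$, whose reduction mod its Jacobson radical matches that of $J$ mod $cS$ (an $n$-generated $S/cS \cong A/cA$-module), is itself $n$-generated. I would set this up by first establishing that $\mu_A(J \cap A) = \mu_{S/cS}(J/cJ) \le n$ when $c \in \mathbf{m}$ — where $\mu$ denotes minimal number of generators — using that $(J \cap A)/\mathbf{m}(J\cap A) \cong (J \cap A + cS)/(\mathbf{m}(J\cap A) + cS)$ can be compared with $J/(\mathbf{m}S)J$ through the analytic isomorphism, and then invoke Nakayama. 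Everything else is a routine transfer of ideal-theoretic data across the isomorphisms $A/cA \cong S/cS$ and the localization identity $A_C = S_C$, so I would keep those verifications brief.
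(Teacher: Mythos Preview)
Your approach to part (1) rests on a false premise: you claim $S_C = A_C$ because ``$S \subseteq A_C$,'' but $C$-analyticity gives that $S/A$ is $C$-torsion-\emph{free}, not $C$-torsion. In the paper's own example $A = k[X] \subseteq S = k[[X]]$ with $C = \{X^i\}$, one has $A_C = k[X,X^{-1}]$ while $S_C = k((X))$. The localization argument is further broken: once $I$ meets $C$, the localization $I_C$ is all of $A_C$, so ``$I_C \cap A = I$'' would force $I = A$. The paper instead argues directly from the two defining properties $S = A + cS$ and $cS \cap A = cA$: for $c \in I \cap C$ one checks $IS \cap A = I$ elementwise, and for $c \in J \cap C$ one gets $J = (J\cap A) + cS = (J\cap A)S$. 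Primality of $PS$ is then read off from $S/PS \cong A/P$.

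Your part (2) argument for the bound $n+1$ is correct and essentially the paper's: lift each generator $s_i$ to $a_i \in A$ modulo $cS$ using surjectivity of $\alpha_c$, then verify $J \cap A = (c,a_1,\dots,a_n)A$ via the isomorphism $\alpha_c$ and $cS \cap A = cA$. Where you get stuck---dropping $c$ in the quasilocal case---the paper resolves with a small trick you are circling but not landing on: lift modulo $c^2S$ rather than $cS$ (the map $\alpha_{c^2}$ is equally an isomorphism). One then obtains $J \cap A = (a_1,\dots,a_n,c^2)A$, and now $c^2 = c\cdot c \in {\ff m}\,(J\cap A)$ immediately (since $c \in {\ff m}$ and $c \in J\cap A$), so Nakayama kills the extra generator in one line. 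Your proposed route through $\mu_A(J\cap A) = \mu_{S/cS}(J/cJ)$ would also need justification that is at least as long as this.
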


\begin{proof}  (1)  Let $I$ be an ideal of $A$ meeting ${{\it C}}$, and let $c \in I \cap S$.  Since $S = A+cS$ and $cS \cap A = cA$, it follows  that $I = IS \cap A$.
%
Similarly, if  $J$ is an ideal of $S$ meeting ${{\it C}}$ and $c \in J \cap C$, then from $S = A + cS$, 
 we deduce that
$J = (J \cap A)S$.
This proves that the mappings $I \mapsto IS$ and $J \mapsto J\cap A$
form a one-to-one correspondence.  The second assertion regarding
prime ideals is now clear, with one possible exception: If $P$ is a
prime ideal of $A$ meeting ${{\it C}}$, then since $S/A$ is
$C$-divisible, $S = A + PS$, so that $S/PS \cong A/(PS \cap A) =
A/P$. Hence $S/PS$ is a domain, and $PS$ is prime ideal of $S$
meeting ${{\it C}}$. This same argument shows also that if $P$ is a
maximal ideal of $A$, then $PS$ is a maximal ideal of $S$.  And
conversely, if $M$ is a maximal ideal of $S$ meeting $C$ and $P$ is  a prime
ideal of $A$ containing $M \cap A$, then the maximality of $M$
implies $M = (M \cap A)S = PS$, and hence by the correspondence, $M
\cap A = P$, so that maximal ideals of $S$ contract to maximal
ideals of $A$.

(2)  Suppose that  $J = (x_1,\ldots,x_n)S$ is a finitely generated
ideal of $S$ such that $c \in J \cap C$. By (1), $J = (J \cap A)S$,
so since $S = A + c^2S$, it follows that $J = (J \cap A)S= (J\cap A) + c^2S$, and hence  for each $i$, there exist $a_i \in J \cap
A$ and $s_i \in S$ such that $x_i = a_i + c^2s_i$. Thus $J =
(x_1,\ldots,x_n)S = (a_1,\ldots,a_n,c^2)S$, and by (1), $J \cap A =
(a_1,\ldots,a_n,c^2)A$.  To prove the last assertion, suppose that $A$ is quasilocal with maximal ideal ${\ff m}$,  and let $I = J \cap A$. Then since $c^2 \in {\ff m}I$, Nakayama's Lemma implies that $I = (a_1,\ldots,a_n)A$.
Hence $I$ can be generated by $n$ elements.
\end{proof}

Some relevant technical properties of analytic extensions were studied in  \cite{OlbAR}.  We quote these as needed throughout the article, beginning with the proof of the next theorem, which shows that twisted subrings occur within analytic extensions, and more interestingly, that a converse is also true.




\begin{thm} \label{C connection} Let $R \subseteq S$ be an extension of rings, and  let $C$ be a multiplicatively closed set of $R$ consisting of  nonzerodivisors of $S$.
\begin{itemize}

\item[{(1)}]  Suppose  $R$ is twisted along $C$ by an $S$-module $K$ that is $C$-torsion-free.  If $D$ is the derivation that twists $R$, then with $A = S \cap \Ker D$, the extension $A \subseteq S$ is  $C$-analytic, $R \subseteq S$ is quadratic and $S/R$ is $C$-torsion.

\item[{(2)}]  Conversely,  if there exists a subring $A$ of $R$ containing $C$ such that $A  \subseteq S$ is $C$-analytic, $R \subseteq S$ is quadratic, and $S/R$ is $C$-torsion, 
 then there is a unique $S$-submodule $K$ of $\Omega_{S_C/A_C}$ such that  $R$ is twisted  along $C$ by $K$, $\Omega_{S_C/A_C}/K$ is $C$-torsion and $d_{S_C/A_C}$ is the derivation that twists $R$.

\end{itemize}
\end{thm}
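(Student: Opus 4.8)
The plan for part~(1) is to read the three conclusions off the characterization of $C$-analytic extensions in Remark~\ref{old remark} and the characterization of quadratic extensions in~(1.4). Write $A = S \cap \Ker D$, and note $A \subseteq R$ since $\Ker D \subseteq D^{-1}(K)$. First I would check that $S = A + cS$ for every $c \in C$: by condition~(c), $s = k + ct$ for some $k \in \Ker D$ and $t \in S$, and then $k = s - ct \in S$, so $k \in S \cap \Ker D = A$. From $S = A + cS$ one gets $S_C = A_C + S$, while $A = S \cap A_C$ holds because $D$ is $C$-linear and $K$ is $C$-torsion-free (if $cx \in A$ with $c \in C$ and $x \in S$, then $cD(x) = D(cx) = 0$, so $D(x) = 0$ in $K_C$); hence $A \subseteq S$ is $C$-analytic by Remark~\ref{old remark}. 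That $S/R$ is $C$-torsion is immediate: for $s \in S$ write $D(s) = k/c$ with $k \in K$ and $c \in C$; then $D(cs) = cD(s) = k \in K$, so $cs \in S \cap D^{-1}(K) = R$. Finally, the composite of $D|_S$ with the quotient map $K_C \to K_C/K$ is a derivation of $S$ into the $S$-module $K_C/K$ whose kernel is $S \cap D^{-1}(K) = R$; since $C \subseteq A \subseteq R$, $S/R$ is $C$-torsion, and $S = A + cS \subseteq R + cS$ for all $c \in C$, the criterion in~(1.4) shows that $R \subseteq S$ is quadratic.

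For part~(2), put $\Omega := \Omega_{S_C/A_C}$ and $D := d_{S_C/A_C}$. Since $C \subseteq A \subseteq A_C$, $D$ is $C$-linear, and condition~(c) is automatic because $C$-analyticity of $A \subseteq S$ gives $S = A + cS \subseteq A_C + cS \subseteq \Ker D + cS$ for all $c \in C$. As $S_C = A_C + S$ and $D$ annihilates $A_C$, we have $D(S_C) = D(S)$, and by~(1.3) this set generates $\Omega$ as an $S_C$-module. Consequently, for any $S$-submodule $K$ of $\Omega$ with $D(R) \subseteq K$, the hypothesis that $S/R$ is $C$-torsion forces $D(S) \subseteq K_C$ (from $cs \in R$ one gets $cD(s) = D(cs) \in K$), so that the condition $K_C = \Omega$ and condition~(b) are equivalent and both hold. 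Thus the assertion reduces to producing a unique $S$-submodule $K$ of $\Omega$ with $D(R) \subseteq K$ and $R = S \cap D^{-1}(K)$.

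To build $K$, I would invoke~(1.4) for the quadratic extension $R \subseteq S$ --- its hypotheses hold, since $C \subseteq R$, $S/R$ is $C$-torsion, and $S = R + cS$ because $A \subseteq R$ --- to obtain an $S$-module $T$ and a derivation $\delta \colon S \to T$ with $\Ker\delta = R$; as $A \subseteq R$, $\delta$ is $A$-linear, so $\delta = \beta \circ d_{S/A}$ for a unique $S$-linear $\beta \colon \Omega_{S/A} \to T$, and $R = d_{S/A}^{-1}(\Ker\beta)$. Via the canonical isomorphism $\Omega \cong (\Omega_{S/A})_C$ and the resulting map $\psi \colon \Omega_{S/A} \to \Omega$ (so that $D|_S = \psi \circ d_{S/A}$), the module $K$ should be carved out of $\Ker\beta$ by transport along $\psi$. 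Here lies what I expect to be the main obstacle: $\Omega_{S/A}$ can carry nonzero $C$-torsion, so the crude image $\psi(\Ker\beta)$ is in general too large and $S \cap D^{-1}(\psi(\Ker\beta))$ may strictly contain $R$; one must choose $K$ carefully so that the $C$-torsion of $\Omega_{S/A}$ is not swept into $D^{-1}(K)$, and this is exactly where the structure theory of $C$-analytic extensions from \cite{OlbAR} together with the quadraticity of $R \subseteq S$ has to be used. The same analysis should give uniqueness, since $D(R) \subseteq K$ is forced and $\Omega/K$ being $C$-torsion pins down the rest; and once $K$ is correctly identified, verification of conditions~(a)--(c) and of the $C$-torsion of $\Omega/K$ is routine.
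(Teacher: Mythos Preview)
Your treatment of part~(1) is correct and essentially identical to the paper's: both verify $C$-divisibility of $S/A$ from condition~(c), $C$-torsion-freeness of $S/A$ from the embedding into $K_C$, $C$-torsion of $S/R$ from $K_C/K$ being $C$-torsion, and quadraticity via the derivation $S \to K_C/K$ and the criterion in~(1.4).

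For part~(2) your reduction is right, but you then take a detour that the paper avoids entirely. The paper simply \emph{defines} $K$ to be the $S$-submodule of $\Omega = \Omega_{S_C/A_C}$ generated by $d(R)$. With this choice, $K_C = \Omega$ follows immediately from $R_C = S_C$ (since $S/R$ is $C$-torsion), and condition~(c) is clear from $C$-analyticity, exactly as you observed. The only nontrivial point is $R = S \cap d^{-1}(K)$ and the uniqueness of $K$; the paper delegates both to \cite[Proposition~3.3]{OlbAR}, just as you anticipated.

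Your route through~(1.4), the module $\Omega_{S/A}$, the map $\beta$, and the localization map $\psi \colon \Omega_{S/A} \to \Omega$ is unnecessary. The ``main obstacle'' you describe --- that $\psi(\Ker\beta)$ may be too large because of $C$-torsion in $\Omega_{S/A}$ --- is an artifact of this detour; it disappears once you work directly in $\Omega_{S_C/A_C}$ and take the $S$-span of $d(R)$ as your candidate for $K$. So there is no genuine gap, but you have made the construction harder than it needs to be: the minimal $S$-submodule containing $D(R)$ is already the correct and unique choice.
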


\begin{proof}
(1)    Since $D(S_C) \subseteq K_C$ and $D$ is an $A$-linear map, it follows that $S/A$ embeds in $K_C$ as an $A$-module.  Consequently, $S/A$ is $C$-torsion-free, since $K_C$ is $C$-torsion free.  Moreover, since $R$ is twisted by $D$, for each $c \in C$ we have $S \subseteq \Ker D + cS$, which in turn implies that $S = A + cS$.  Thus $S/A$ is $C$-divisible, and $A \subseteq S$ is a $C$-analytic extension.  Finally, we show that $S/R$ is a $C$-torsion
module and $R \subseteq S$ is a quadratic extension.  The former is the case, since if $s \in S$, then since $K_C/K$ is
$C$-torsion, there exists $c \in C$ such that $D(cs) = cD(s) \in K$,
and hence $cs \in S \cap D^{-1}(K)=R$.
  To see that $R \subseteq S$ is quadratic, we use (1.4).
Consider the derivation $$D':S \rightarrow K_C/K:s \mapsto D(s) + K,$$ where $s\in S$.  Then $\Ker D' = \{s \in S:D(s) \in K\} = S \cap D^{-1}(K) =R$, so that since $S/R$ is $C$-torsion (as we have verified) and $C$-divisible (it is the image of the $C$-divisible $A$-module $S/A$), we may apply (1.4) to conclude  that $R \subseteq S$ is a quadratic extension.

 (2) Write $d=d_{S_C/A_C}$ and $\Omega = \Omega_{S_C/A_C}$.  Define $K$ to be the $S$-submodule of $\Omega$ generated by $d(R)$.  
We claim  $R$ is twisted along $C$ by  $K$, and   
 the derivation that twists $R$ is  $d$.
Since $d$ is $C$-linear and $R_C = S_C$, we have:
  $$K_C = \sum_{r \in R}S_Cd(r) = \sum_{x \in R_C}S_Cd(x) = \Omega.$$
 Hence $K_C = \Omega$ and $K_C$ is generated as an $S_C$-module by $d(S)$.
Moreover,  $d:S_C \rightarrow K_C$ is a
$C$-linear derivation.  It is shown in \cite[Proposition 3.3]{OlbAR} that since $A \subseteq S$ is $C$-analytic, $R \subseteq S$ is quadratic and $S/R$ is $C$-torsion, then with $K$ defined as above, $R = S \cap  
 d^{-1}(K)$ and $\Omega/K$ is $C$-torsion, and  $K$ is the unique $S$-submodule of $\Omega$ satisfying these last two  properties.    
  Finally, since $A \subseteq S$ is $C$-analytic, $S = A + cS$ for all $c \in C$, and hence, since $A \subseteq \Ker d$, we have $S \subseteq \Ker d + cS$ for all $c \in C$.
Thus $R$ is twisted along $C$ by the $S$-module $K$.
\end{proof}

There is also a version of the  theorem for strongly analytic extensions.  Recall that if $S$ is a domain and $L$ is a torsion-free $S$-module, then a submodule $K$ of $L$ is {\it full} if $L/K$ is a torsion $S$-module.

\begin{cor} \label{connection}  Let $R \subseteq S$ be an extension of domains, and let $F$ denote the quotient field of $S$.
\begin{itemize}

\index{strongly twisted subring!as a strongly analytic sandwich}

\item[{(1)}]  Suppose  that $R$ is strongly twisted by a torsion-free $S$-module $K$.  If $D$ is the derivation that strongly twists $R$, then with $A = S \cap \Ker D$, the extension  $A  \subseteq S$ is strongly analytic, $R \subseteq S$ is 
quadratic and $R$ has quotient field $F$.

\index{analytic sandwich!as a strongly twisted subring}

\item[{(2)}]  Conversely,  if there exists a subring $A$ of $R$  such that $A  \subseteq S$ is strongly analytic, $R\subseteq S$ is quadratic and $R$ has quotient field $F$, then there is a unique 
 full $S$-submodule $K$ of $\Omega_{F/Q}$ such that  $R$ is strongly twisted   by $K$ and $d_{F/Q}$ is the derivation that twists $R$.

\end{itemize}
\end{cor}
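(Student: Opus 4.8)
The plan is to deduce both parts from Theorem~\ref{C connection}, applied with the multiplicatively closed set $C = A\setminus\{0\}$ --- in part~(1) with $A = S\cap\Ker D$, in part~(2) with $A$ the given subring --- writing $Q$ for the quotient field of $A$. In either case $S$ is a domain, so $C$ consists of nonzerodivisors of $S$; and $A\subseteq R$ (in~(1) because $\Ker D\subseteq D^{-1}(K)$, in~(2) by hypothesis), so $C\subseteq R$.

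For part~(1): as observed just after the definition of \emph{strongly twisted}, $R$ is twisted along $C$ by $K$, with twisting derivation induced by $D$, and $K$, being a torsion-free $S$-module, is $C$-torsion-free. So Theorem~\ref{C connection}(1) gives that $A\subseteq S$ is $C$-analytic, $R\subseteq S$ is quadratic, and $S/R$ is $C$-torsion. Since $A\subseteq R$, the last fact forces $R$ to have quotient field $F$: for $s\in S$ choose $c\in C$ with $cs\in R$; then $s=(cs)/c$ lies in the quotient field of $R$, so $F$ (the quotient field of $S$) lies in that of $R$, and conversely, whence they coincide. It remains to show $A\subseteq S$ is \emph{strongly} analytic. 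Beyond $C$-analyticity (which is the ``analytic along $A\setminus\{0\}$'' half of that notion), this needs $sS\cap A\ne 0$ for every $0\ne s\in S$, and here condition~(c) of \emph{strongly twisted} is used at full strength: for $x\in S$, writing $x=k+s\sigma$ with $k\in\Ker D$ and $\sigma\in S$ gives $k=x-s\sigma\in S\cap\Ker D=A$, so $S=A+sS$ for \emph{every} $0\ne s\in S$. Applying this to $s$ and to $s^2$ and using the modular law, $sS=(sS\cap A)+s^2S$; were $sS\cap A=0$, this would give $sS=s^2S$, hence $S=sS$ (cancel $s$ in the domain $S$), so $s$ would be a unit and $1\in sS\cap A=0$, absurd. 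Thus $A\subseteq S$ is strongly analytic.

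For part~(2): take $C=A\setminus\{0\}$. Strong analyticity of $A\subseteq S$ gives $S_C=F$ (for instance $S_C=A_C+S=Q+S=F$ by Remark~\ref{old remark}) and $A_C=Q$; in particular each $0\ne s\in S$ divides an element of $C$ in $S$, so $S=A+sS$ for every such $s$. The substantive point is that $S/R$ is $C$-torsion, i.e.\ $R_C=F$. Since $R\subseteq S$ is quadratic it is integral; given $0\ne\rho\in R$, strong analyticity provides $0\ne a=\rho\sigma\in\rho S\cap A$ with $\sigma\in S$, and multiplying a monic equation $\sigma^n+r_{n-1}\sigma^{n-1}+\cdots+r_0=0$ of $\sigma$ over $R$ by $\rho^n$ yields $a^n+r_{n-1}\rho\,a^{n-1}+\cdots+r_0\rho^n=0$, so $a^n\in\rho R$; hence $\rho$ is a unit of $R_C$. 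Therefore $R_C$ is a field containing $Q$ with quotient field $F$, so $R_C=F$, as wanted. Now Theorem~\ref{C connection}(2) applies (its hypotheses have been checked: $A$ is a subring of $R$ containing $C$, $A\subseteq S$ is $C$-analytic, $R\subseteq S$ is quadratic, $S/R$ is $C$-torsion) and yields a unique $S$-submodule $K$ of $\Omega_{S_C/A_C}=\Omega_{F/Q}$ with $R$ twisted along $C$ by $K$, with $\Omega_{F/Q}/K$ $C$-torsion, and with $d_{F/Q}$ the twisting derivation. Because $S_C=F$, this twisting derivation \emph{is} $d_{F/Q}\colon F\to\Omega_{F/Q}$ and $FK=S_C\otimes_S K=K_C=\Omega_{F/Q}$, so $K$ is full; conditions~(a),(b) of ``strongly twisted by $K$'' then read exactly as for ``twisted along $C$'', while~(c) holds since $A\subseteq Q\subseteq\Ker d_{F/Q}$ gives $S=A+sS\subseteq\Ker d_{F/Q}+sS$ for all $0\ne s\in S$. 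Finally, any full $S$-submodule $K'$ of $\Omega_{F/Q}$ that strongly twists $R$ via $d_{F/Q}$ has $R=S\cap d_{F/Q}^{-1}(K')$ and $\Omega_{F/Q}/K'$ torsion, hence $C$-torsion, so $K'=K$ by the uniqueness in Theorem~\ref{C connection}(2).

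The main obstacle is exactly the two ``strengthening'' steps where the extra hypotheses are genuinely consumed: in part~(1), proving $sS\cap A\ne 0$ for all $0\ne s\in S$ from condition~(c) (via $S=A+sS$ and the modular law), and, most of all, in part~(2), proving that $S/R$ is $C$-torsion from integrality of $R\subseteq S$ together with the assumption that $R$ has quotient field $F$ --- this is the point where one must pass from $\rho S\cap A\ne 0$ down to $\rho R\cap A\ne 0$. Everything else is unwinding definitions and transporting Theorem~\ref{C connection} across the dictionary between $C$-analytic/twisted-along-$C$ and strongly analytic/strongly twisted.
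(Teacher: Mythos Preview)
Your proof is correct and follows the paper's approach: both parts are deduced from Theorem~\ref{C connection} with $C=A\setminus\{0\}$. Two small differences are worth noting. In part~(1), your TGF argument via the modular law ($sS=(sS\cap A)+s^2S$) is a variant of the paper's one-line trick: write $s=a+s^2\sigma$ using $S\subseteq\Ker D+s^2S$, so $s(1-s\sigma)=a\in A$, and $a\ne 0$ since $s$ is a nonunit. In part~(2), you supply something the paper glosses over: the hypothesis ``$S/R$ is $C$-torsion'' needed to invoke Theorem~\ref{C connection}(2). Your integrality argument (from $0\ne a\in\rho S\cap A$ deduce $a^n\in\rho R$, hence every nonzero $\rho\in R$ is a unit in $R_C$, hence $R_C=F$) is exactly the right way to fill this, and it is here that the hypothesis ``$R$ has quotient field $F$'' is genuinely consumed.
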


\begin{proof}  (1)     First observe that every nonzero ideal of $S$ contracts to a nonzero ideal of $A$.  For if $s$ is a nonzero nonunit in $S$, then by assumption $S \subseteq \Ker D +s^2S$, so that $s = a + s^2 \sigma$ for some $a\in \Ker D \cap S = A$ and $\sigma \in S$.  Thus $s(1-s\sigma) \in A$, and since $s$ is a nonzero nonunit in $S$, it follows that $sS \cap A \ne 0$.  Therefore, 
 the extension $A \subseteq S$ has TGF.  Moreover,  $R$ is twisted by $K$ along $C:=A \setminus \{0\}$, so by Theorem~\ref{C connection}, $A \subseteq S$ is  $C$-analytic, $R \subseteq S$ is quadratic and $S/R$ is a torsion $R$-module.  Since $A \subseteq S$ has TGF and is analytic along $C = A \setminus \{0\}$, it follows that $A \subseteq S$ is strongly analytic.    

(2) Let $Q$ denote the quotient field of $A$, and let $C = A \setminus \{0\}$.  Since $A \subseteq S$ has TGF, it follows that $QS =S_C = F$.  Now by Theorem~\ref{C connection}, there exists a unique full $S$-submodule $K$ of
$\Omega_{F/Q}$ 
  such that $R$ is twisted by $K$ along $C$ by the  derivation
 $d_{F/Q}$.     Clearly, $d_{F/Q}$ generates $K_C=FK=\Omega_{F/Q}$ as an $F$-vector space.  Moreover, since $R$ is twisted by $D$ along $C$, we have 
$S \subseteq \Ker D + aS$ for all $0 \ne a \in A$.  Using again that $A \subseteq S$ has TGF, it follows that $S \subseteq \Ker D + sS$ for all $0 \ne s \in S$, and hence 
 $R$ is strongly twisted  by the $S$-module $K$.
\end{proof}

\section{Existence of strongly twisted subrings}

In this section we prove the existence of strongly twisted subrings of domains $S$ with sufficiently large quotient field $F$.  
When $S$ is, for example, a DVR, then this amounts to finding a subring $A$ of $S$ such that $A$ is a DVR and $A \subseteq S$ is an immediate extension, meaning that $A$ and $S$ have the same residue field and value group.  For given such a DVR $A$ with quotient field $Q$, then as in Corollary~\ref{connection}, a full $S$-submodule of $\Omega_{F/Q}$ gives rise to a strongly twisted subring of $S$; see \cite{OlbAR} for more on the special case of DVRs.  What makes the case of DVRs simpler is that an immediate  extension is easily shown to be strongly analytic.  However, in higher dimensions it is more of a challenge to find subrings $A$ of a given domain $S$ that induce a strongly analytic extension $A \subseteq S$ because such extensions must not only be analytic along $C = A \setminus \{0\}$, but must also have trivial generic fiber.   Satisfying these two conditions simultaneously is the obstacle.  

The first proposition of the section characterizes, but does not guarantee, the existence of strongly twisted subrings, and we do not use it again in this section when we prove existence results.  However, the proposition is a useful formulation for some classes of examples considered in Theorem~\ref{convex examples}. The proposition relies on a simple fact:        
Once a torsion-free module can be found that strongly twists a subring of $S$, then others also must exist, and it is really only the dimension of the $F$-vector space $FK$, i.e., the {\it rank} of $K$, \index{rank of a module} that is essential in guaranteeing the existence of other twisting modules.  This is the content of the following observation.

\begin{lem} \label{more} Let $S$ be domain, and suppose  $S$ has a subring that is strongly twisted by a  torsion-free $S$-module $K$. Then for every torsion-free $S$-module $L$ with $\rank(L) \leq \rank(K)$, there exists a subring of $S$ that is strongly twisted by $L$.
\end{lem}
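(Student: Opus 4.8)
The plan is to obtain the required derivation into the divisible hull $FL=F\otimes_S L$ by composing the given twisting derivation with a linear surjection of divisible hulls, exploiting the fact that condition (b) in the definition of strongly twisted is a statement purely about $F$-vector space structure, while (a) and (c) pass to any such composition. Concretely, let $D\colon F\to FK$ be the derivation that strongly twists the given subring of $S$ by $K$, so that $\rank(K)=\dim_F FK$, and let $\rank(L)=\dim_F FL$. Since $\rank(L)\le\rank(K)$, there is an $F$-linear surjection $\pi\colon FK\to FL$ (a surjection between bases exists whenever the target has dimension no larger than the source, even for infinite cardinals). Set $D'=\pi\circ D\colon F\to FL$ and $R'=S\cap (D')^{-1}(L)$, where $L$ is regarded as a submodule of $FL$ via torsion-freeness.

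First I would check that $D'$ is a derivation: it is clearly additive, and $D'(st)=\pi(D(st))=\pi\bigl(sD(t)+tD(s)\bigr)=sD'(t)+tD'(s)$ since $\pi$ is $F$-linear; because the preimage of a submodule under a derivation is a subring, $R'$ is a subring of $S$. Then I would verify the three conditions defining strongly twisted for the pair $(D',L)$. Condition (a) holds by the definition of $R'$. Condition (b) holds because $D'(F)=\pi(D(F))$ and $\pi$ is a surjective $F$-linear map, so the $F$-span of $D'(F)$ equals $\pi(FK)=FL$, using that $D(F)$ spans $FK$. Condition (c) holds because $D'$ factors through $D$, so $\Ker D\subseteq\Ker D'$; hence for every nonzero $s\in S$ we get $S\subseteq\Ker D+sS\subseteq\Ker D'+sS$, the first inclusion being condition (c) for $D$.

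I expect no essential obstacle: the construction is two lines and the verifications are routine. The only points that merit a word of care are that $\pi\circ D$ is genuinely a derivation rather than merely an additive map, that a surjection $FK\to FL$ exists when the ranks are possibly infinite cardinals, and that $R'$ is in fact a ring — all immediate from the elementary properties of derivations recalled in the introduction.
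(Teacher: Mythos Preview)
Your proof is correct and is essentially identical to the paper's own argument: compose the given derivation with an $F$-linear surjection $FK\to FL$ and verify (a), (b), (c) directly, using $\Ker D\subseteq\Ker D'$ for (c). The paper's version is slightly terser but uses exactly the same construction and verifications.
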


\begin{proof}  Since $\rank(L) \leq \rank(K)$, there exists a projection of $F$-vector spaces $\alpha:FK \rightarrow FL$, so that $\alpha(FK) = FL$.  Let $D$ be the derivation that strongly  twists $K$, and let $D' = \alpha \circ D$.  Then $D'$ is a derivation taking values in $FL$, and since $D(F)$ generates $FK$ as an $F$-vector space, it follows that $D'(F) = \alpha(D(F))$ generates $FL$ as an $F$-vector space.  Moreover, $\Ker D \subseteq  \Ker \alpha \circ D = \Ker D'$, so that for all $0 \ne s \in S$, $S \subseteq \Ker D + sS \subseteq \Ker D' + sS$.  Therefore, $T:= S \cap D'^{-1}(L)$ is a subring of $S$ that is strongly twisted by $L$.
\end{proof}

\begin{prop} \label{very new} The following are equivalent for a domain $S$ with quotient field $F$.  

\begin{itemize}

\item[{(1)}] $S$ has a strongly twisted subring.

\item[{(2)}]  There exists  a nonzero derivation $D:F \rightarrow F$ such that  $S \subseteq \Ker D + sS$ for all $0 \ne s \in S$.

\item[{(3)}]  For each nonzero $S$-submodule $K$ of $F$, there exists a subring of $S$ that is strongly twisted by $K$.


\end{itemize}

\end{prop}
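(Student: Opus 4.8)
The plan is to establish the cycle of implications $(3) \Rightarrow (1) \Rightarrow (2) \Rightarrow (3)$. The implication $(3) \Rightarrow (1)$ is immediate: since $F$ is a nonzero $S$-submodule of itself, (3) applied to $K = F$ produces a strongly twisted subring of $S$. For $(1) \Rightarrow (2)$, suppose $D : F \rightarrow FK$ strongly twists some subring $R$ by a torsion-free module $K$. Since $D(F)$ generates $FK$ as an $F$-vector space and $D$ is nonzero, $FK$ has rank at least $1$; choosing any nonzero $F$-linear projection $\pi : FK \rightarrow F$, the composite $\pi \circ D : F \rightarrow F$ is a derivation, it is nonzero because $D(F)$ spans $FK$ and $\pi \ne 0$, and $\Ker D \subseteq \Ker(\pi \circ D)$, so condition (c) for $D$ gives $S \subseteq \Ker D + sS \subseteq \Ker(\pi \circ D) + sS$ for all $0 \ne s \in S$. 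This is exactly (2). (Alternatively this is essentially the rank-reduction argument of Lemma~\ref{more} applied with $L = S$.)

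The substantive implication is $(2) \Rightarrow (3)$. Given a nonzero derivation $D : F \rightarrow F$ satisfying $S \subseteq \Ker D + sS$ for all $0 \ne s \in S$, and given a nonzero $S$-submodule $K$ of $F$, I want to build a derivation that strongly twists a subring of $S$ by $K$. The natural candidate is to rescale: for a suitable nonzero $s_0 \in S$, consider $D' : F \rightarrow F$ defined by $D'(x) = s_0 D(x)$, which is again a derivation since $s_0$ is a constant. The point of the scaling is to arrange that $D'$ takes $F$-generating values inside $FK = F$ while the preimage $R := S \cap D'^{-1}(K)$ is genuinely a subring of $S$ twisted by $K$ and not by some larger module; one checks $D'(F) = s_0 D(F)$ still spans $F$ over $F$ (it is nonzero), and $\Ker D' = \Ker D$, so condition (c) transfers verbatim. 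Conditions (a) and (b) in the definition of strongly twisted then hold by construction: (a) is the definition of $R$, and (b) holds because $FK = F = D'(F)\cdot F$.

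The one place requiring care — and what I expect to be the main obstacle — is verifying that the resulting $R = S \cap D'^{-1}(K)$ has quotient field $F$ and, more precisely, that $K$ (rather than a proper submodule) is recovered, i.e. that the module actually twisting $R$ is $K$ itself. By Corollary~\ref{connection}(1), once we know $R$ is strongly twisted by \emph{some} torsion-free module we get a strongly analytic extension $A = S \cap \Ker D' \subseteq S$; the issue is matching that module with the prescribed $K$. Here I would use that $K$ is a nonzero $S$-submodule of $F$, hence $FK = F$, and choose $s_0$ so that $D(S) \subseteq s_0^{-1} K$ fails to be improvable — concretely, pick $0 \ne s_0 \in S$ with $s_0 D(S) \subseteq K$ (possible since $K \ne 0$ and $D(S)$ lies in the fraction field, using $C$-divisibility/torsion properties of $S/A$ guaranteed by (c) to clear a single denominator), and then argue that $R = S \cap D'^{-1}(K)$ together with the derivation $D'$ satisfies all three conditions. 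The verification that (c) genuinely propagates through the scaling — that $\Ker D' = \Ker D$ and no information about divisibility is lost — is routine, but confirming that one common denominator $s_0$ suffices to pull all of $D(S)$ into $K$ is the step that uses the full strength of hypothesis (2) and is where I would spend the most effort.
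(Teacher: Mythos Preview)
Your implications $(3)\Rightarrow(1)$ and $(1)\Rightarrow(2)$ are correct and match the paper's argument essentially verbatim.

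For $(2)\Rightarrow(3)$ you have the right construction but you bury it under unnecessary complications and one unjustified claim. The key observation---which you state but do not fully exploit---is that for any nonzero $S$-submodule $K\subseteq F$ one has $FK=F$. Hence the derivation $D:F\to F$ from (2) is already a derivation $D:F\to FK$, and setting $R=S\cap D^{-1}(K)$ one checks the three defining conditions immediately: (a) is the definition of $R$; (b) holds because $D(F)\ne 0$ and $FK=F$ is one-dimensional over $F$; (c) is the hypothesis of (2). That is the whole argument, and it is exactly what the paper does (phrasing it as an application of Lemma~\ref{more} with $S$ itself strongly twisted by $F$). No scaling by $s_0$ is needed; your $D'=s_0D$ works for \emph{any} nonzero $s_0$, including $s_0=1$.

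Your worries in the final paragraph reflect a misreading of the definition. ``Strongly twisted by $K$'' only asks for the \emph{existence} of a derivation $D:F\to FK$ satisfying (a), (b), (c); it does not require that $K$ be uniquely recoverable from $R$, nor that $R$ have quotient field $F$ (the latter is a consequence, Corollary~\ref{connection}, not a hypothesis to verify). So there is nothing to ``match'' and no obstacle. Moreover, your proposed fix---choosing $s_0$ with $s_0D(S)\subseteq K$---is not justified: nothing in (2) bounds $D(S)$ inside any fractional ideal of $S$, and in general no such $s_0$ need exist. Fortunately this step is irrelevant; delete it and the proof is complete.
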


\begin{proof}
(1) $\Rightarrow$ (2)  Let $R$ be a subring of $S$ that is strongly twisted by a torsion-free $S$-module $K$, and let $D$ be the derivation that twists it.  Let  $\alpha:FK \rightarrow F$ be  an $F$-linear transformation such that $\alpha \circ D:F \rightarrow F$ is nonzero.  Then since $R$ is strongly twisted by $K$, we have  for all $s \in S$  that $S \subseteq \Ker D + sS \subseteq \Ker \alpha \circ D + sS$, so $\alpha \circ D$ is a derivation that behaves as in (2).   

(2) $\Rightarrow$ (3)  With $D$ as in (2), since $D$ is a nonzero deriviation and $F$ is a field, $D(F)$ generates $F$ as an $F$-vector space.  Thus  $S$ is a strongly twisted subring of itself (it is strongly twisted by $F$), so (3) follows from Proposition~\ref{more}.  

(3) $\Rightarrow$ (1) This is clear.  
%
%
 \end{proof}  

It follows that $S$ is a strongly twisted subring of itself if and only if $S$ satisfies the equivalent conditions of the proposition.

  The main focus of this section is  a  technique for producing strongly analytic extensions, and hence strongly twisted subrings.  
 Not surprisingly, we need
space to carve out such subrings, and hence we work with assumptions
involving infinite differential bases.  


\begin{lem} \label{pre dig} Let $F/k$ be an extension of
fields, where $k$ has at most countably many elements.

\begin{itemize}

\item[{(1)}]  When $F$ has characteristic $0$, then $|F| = \dim_F \Omega_{F/k}$  if and only if $F$ has infinite transcendence
degree over $k$.

\item[{(2)}]  If $F$ has positive characteristic, then in order that $|F| = \dim_F \Omega_{F/k}$, it suffices that  $F/k$ is separably generated and of infinite transcendence degree.
\index{separably generated extension}
\end{itemize}

\end{lem}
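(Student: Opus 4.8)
The plan is to compute $\dim_F \Omega_{F/k}$ by filtering $F/k$ through a transcendence basis and using the two standard exact sequences for Kähler differentials together with the fact that the differentials of a separably generated (in particular, separable algebraic over a purely transcendental) extension are as large as one expects. First I would record the upper bound that holds in all cases: $\Omega_{F/k}$ is generated as an $F$-vector space by $\{d_{F/k}(x) : x \in F\}$ by (1.3), so $\dim_F \Omega_{F/k} \le |F|$ whenever $F$ is infinite; thus in both parts only the reverse inequality $\dim_F \Omega_{F/k} \ge |F|$ is at issue. For (1), if $F$ has finite transcendence degree $n$ over $k$, then since $\mathrm{char}\,F = 0$ the extension $F/k$ is separably generated, $\Omega_{F/k}$ is free of rank $n$ over $F$ (it has a basis $d_{F/k}(t_1),\dots,d_{F/k}(t_n)$ for a transcendence basis $t_1,\dots,t_n$), so $\dim_F \Omega_{F/k} = n < |F|$ because $F$ is then a countable-or-larger field of the same cardinality as a finitely generated extension of the countable field $k$ — actually $|F| = \max(\aleph_0, |k|) = \aleph_0$ here only if $k$ is finite or countable, which it is, so $|F| = \aleph_0 > n$; conversely if the transcendence degree is infinite I must exhibit $|F|$-many $F$-linearly independent differentials.

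The core step, used for the "conversely" direction of (1) and for (2), is this: let $B = \{t_i : i \in I\}$ be a transcendence basis of $F/k$ with $F/k(B)$ separable algebraic (such $B$ exists by hypothesis of separable generation in (2), and automatically in characteristic $0$ in (1)). By the first fundamental exact sequence applied to $k \subseteq k(B) \subseteq F$,
$$ \Omega_{k(B)/k} \otimes_{k(B)} F \longrightarrow \Omega_{F/k} \longrightarrow \Omega_{F/k(B)} \longrightarrow 0, $$
and $\Omega_{F/k(B)} = 0$ since $F/k(B)$ is separable algebraic; hence $\Omega_{F/k}$ is a quotient of $\Omega_{k(B)/k} \otimes_{k(B)} F$, which has dimension $|I| = \mathrm{tr.deg}_k F$. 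That gives $\dim_F \Omega_{F/k} \le \mathrm{tr.deg}_k F$, but I want a lower bound, so instead I would argue that the composite map $\Omega_{k(B)/k} \otimes_{k(B)} F \to \Omega_{F/k}$ is \emph{injective}: this is exactly the statement that $d_{F/k}(t_i)$, $i \in I$, are $F$-linearly independent, which holds because $B$ is a $p$-basis-like separating transcendence basis — one checks that a relation $\sum_{i} a_i\, d_{F/k}(t_i) = 0$ with $a_i \in F$ pulls back, via a $k(B)$-derivation of $F$ extending $\partial/\partial t_j$ (these exist precisely because $F/k(B)$ is separable algebraic, so derivations of $k(B)$ lift uniquely to $F$), to force each $a_j = 0$. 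Therefore $\dim_F \Omega_{F/k} \ge |I| = \mathrm{tr.deg}_k F$. Finally I combine this with a cardinality count: when $\mathrm{tr.deg}_k F$ is infinite, $|F| = \max(|k|, \mathrm{tr.deg}_k F) = \mathrm{tr.deg}_k F$ because $k$ is countable and $F$ is algebraic over $k(B)$ with $|k(B)| = \mathrm{tr.deg}_k F$ and the algebraic closure of a field does not increase its cardinality beyond $\max(\aleph_0,|k(B)|)$. Hence $\dim_F \Omega_{F/k} \ge \mathrm{tr.deg}_k F = |F|$, and combined with the easy upper bound this gives equality, proving the "if" direction of (1) and all of (2).

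For the remaining "only if" direction of (1) I argue the contrapositive, already sketched above: finite transcendence degree over a countable $k$ of characteristic $0$ forces $F$ countable and $\Omega_{F/k}$ finite-dimensional, so $\dim_F \Omega_{F/k} < \aleph_0 = |F|$. The main obstacle I anticipate is the linear-independence claim for $\{d_{F/k}(t_i)\}_{i\in I}$ when $I$ is infinite and the characteristic is positive: one must be careful that "separably generated" is used in the correct form (existence of a separating transcendence basis) and that derivations of $k(B)$ into $F$ indeed extend to $F$ — this is where separability of $F/k(B)$ is essential and where the argument would break for a non-separable extension. Everything else (the two exact sequences, the vanishing of $\Omega$ for separable algebraic extensions, the cardinal arithmetic $\max(\aleph_0, \mathrm{tr.deg}) $) is standard and I would cite Matsumura or Kunz rather than reprove it.
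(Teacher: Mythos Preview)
Your proposal is correct and follows essentially the same route as the paper: identify $\dim_F \Omega_{F/k}$ with the cardinality $|I|$ of a separating transcendence basis, then use elementary cardinal arithmetic (with $k$ countable) to show $|I| = |F|$ exactly when $I$ is infinite. The only difference is one of packaging: where the paper cites \cite[Corollary A1.5(a)]{Eis} directly for the fact that $\{d_{F/k}(t_i)\}_{i\in I}$ is an $F$-basis of $\Omega_{F/k}$, you unpack this via the first exact sequence together with the extension of derivations along the separable algebraic extension $F/k(B)$, which is exactly the standard proof of that cited result.
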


\begin{proof}
In both (1) and (2), there exists a transcendence basis
 $\{s_i:i \in I\}$ of $F$ over $k$ such that $F$ is separably algebraic over the subfield $k(s_i:i \in I)$.  Thus 
  $\{d_{F/k}(s_i):i \in I\}$ is a
basis for the $F$-vector space $\Omega_{F/k}$ \cite[Corollary A1.5(a), p.~567]{Eis}.  In particular,
 $|I| = \dim_F \Omega_{F/k}$.
  We claim that $|F| = |I|$.
  Since $F$ is algebraic over the infinite field
$F_0:=k(s_i:i \in I)$, then $|F|=|F_0|$ \cite[Lemma 2.12.6]{NewN}.
%
Also, since $k$ is countable and $I$ is infinite, the field  $F_0=k(s_i:i \in I)$ has cardinality $|I|$ (see for example the proof  of Theorem 2.12.5 in \cite{NewN}).
%
%
%
Therefore,
$|F|= |I| = \dim_{F} \Omega_{F/k}$.
\end{proof}


Our interest in such field extensions is that  in postive characteristic  they give rise to analytic
extensions:

\begin{lem} \label{dig} Let $ F/k $ be a 
extension of  fields with $|F| = \dim_F \Omega_{F/k}$, and suppose  $k$ has at most countably many elements;  $S$ is a
$k$-subalgebra of $F$ having quotient field $F$; and
 $L$ is an $F$-vector space of at most countable dimension.
Then for any $t \in S$ there exists a ring $A$ such that $k[t] \subseteq  A \subseteq S$ and
  $S/A$ is a torsion-free divisible
$A$-module.  For this ring $A$,  there exists an $A$-linear derivation $D:F \rightarrow L$ such
that $D(S) = L$.  \index{analytic extension!of characteristic $p$} If also $k$ has positive
characteristic, then $A \subseteq S$ is a strongly analytic extension.
\end{lem}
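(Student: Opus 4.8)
The plan is to reduce all three parts to the construction of a single $k$-linear derivation $D\colon F\to L$ with $D(t)=0$ and $D(S)=L$, and then to build that derivation by a transfinite recursion of length $|F|$ in which the hypothesis $|F|=\dim_F\Omega_{F/k}$ supplies exactly the room needed.

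Set $\kappa=\dim_F\Omega_{F/k}$. Since $F$ is the quotient field of $S$ we have $|S|=|F|=\kappa$, and $|L|=\kappa$ as well once $L\ne 0$ (the case $L=0$ being trivial, with $A=S$ and $D=0$). To force $D(t)=0$, I would replace $k$ by the (still countable) field $k(t)$: in the fundamental exact sequence of K\"ahler differentials for $k\subseteq k(t)\subseteq F$, the kernel of $\Omega_{F/k}\to\Omega_{F/k(t)}$ is a quotient of $F\otimes_{k(t)}\Omega_{k(t)/k}$ and so has $F$-dimension at most $\dim_{k(t)}\Omega_{k(t)/k}\le 1$, whence $\dim_F\Omega_{F/k(t)}=\kappa$ also. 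It therefore suffices to produce a $k(t)$-linear derivation $D\colon F\to L$ with $D(S)=L$: for such a $D$, the ring $A:=S\cap\Ker D$ satisfies $k[t]\subseteq k(t)\cap S\subseteq A\subseteq S$, and the remaining conclusions follow formally.

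For the construction, recall that a $k(t)$-linear derivation $F\to L$ is the same thing as an $F$-linear map $\phi\colon\Omega\to L$, where $\Omega:=\Omega_{F/k(t)}$, via $D=\phi\circ d$ with $d:=d_{F/k(t)}$. Two facts drive the recursion: the set $\{d(s):s\in S\}$ spans $\Omega$ over $F$, since $d(f)=\frac1u d(s)-\frac{s}{u^2}d(u)$ for any $f=s/u\in F$ with $s,u\in S$, and $d(F)$ generates $\Omega$; and $\Omega$ has an $F$-basis $\{\omega_i:i\in I\}$ with $|I|=\kappa$. Enumerate $L=\{\ell_\alpha:\alpha<\kappa\}$ and define $\phi$ on the $\omega_i$ by recursion on $\alpha<\kappa$, keeping a running set $I_\alpha\subseteq I$ of indices already assigned; $I_\alpha$ is a union of fewer than $\kappa$ finite sets, so $|I_\alpha|<\kappa$ throughout. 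At stage $\alpha$, since a spanning set of $\Omega$ cannot lie inside the proper subspace $\operatorname{span}_F\{\omega_i:i\in I_\alpha\}$, choose $s_\alpha\in S$ with $d(s_\alpha)\notin\operatorname{span}_F\{\omega_i:i\in I_\alpha\}$; writing $d(s_\alpha)=\sum_{i\in J}f_i\omega_i$ with $J$ its finite support, we have $J\not\subseteq I_\alpha$, so I may pick $i_0\in J\setminus I_\alpha$, set $\phi(\omega_i)=0$ on the remaining not-yet-assigned indices of $J$, and then solve the single equation $\sum_{i\in J}f_i\phi(\omega_i)=\ell_\alpha$ for $\phi(\omega_{i_0})$ (legitimate since $f_{i_0}\ne 0$); put $I_{\alpha+1}=I_\alpha\cup J$. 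Take unions at limit stages, and afterwards set $\phi(\omega_i)=0$ on every index never used. By construction $D(s_\alpha)=\phi(d(s_\alpha))=\ell_\alpha$ for each $\alpha$, so $D(S)=L$.

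Granted $D$, put $A=S\cap\Ker D$. The map $s+A\mapsto D(s)$ is a well-defined injective $A$-module homomorphism $\bar D\colon S/A\to L$ (injective by the definition of $A$, and $A$-linear because $A\subseteq\Ker D$) with image $D(S)=L$, so $S/A\cong L$ as $A$-modules; since $L$ is a torsion-free divisible $A$-module (it is an $F$-vector space and $A\subseteq F$), so is $S/A$, which gives parts $(1)$ and $(2)$. For part $(3)$, suppose $\operatorname{char}k=p>0$. Then $D(s^p)=p\,s^{p-1}D(s)=0$ for all $s\in S$, so $0\ne s^p\in sS\cap A$ whenever $s\ne 0$; hence every nonzero ideal of $S$ meets $A$, in particular every nonzero prime, so $A\subseteq S$ has trivial generic fiber, and together with the torsion-free divisibility of $S/A$ this is precisely the statement that $A\subseteq S$ is strongly analytic. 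The step that I expect to need the most care is the bookkeeping in the recursion: each requirement $\ell_\alpha$ must be met by assigning $\phi$ on only finitely many fresh basis vectors, with no earlier assignment ever disturbed. This is exactly where $|F|=\dim_F\Omega_{F/k}$ is essential, since it furnishes an $F$-basis of $\Omega$ of cardinality $|S|=|L|$, leaving room to satisfy $\kappa$-many requirements; the positive-characteristic hypothesis, by contrast, enters only at the very last step through $D(s^p)=0$.
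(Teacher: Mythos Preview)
Your argument is correct, and it takes a genuinely different route from the paper's proof. Both proofs factor the sought derivation as $D=\phi\circ d$ for an $F$-linear map $\phi$ out of a module of K\"ahler differentials, and both conclude in the same way once $D(S)=L$ is in hand: set $A=S\cap\Ker D$, observe $S/A\cong L$ is torsion-free divisible, and in positive characteristic use $D(s^p)=0$ for TGF. The differences are in how $\phi$ is built and how the constraint $D(t)=0$ is arranged.

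The paper works over $k$ throughout. It first extracts from $S$ a subset $\{s_i:i\in I\}$ with $\{d_{F/k}(s_i)\}$ an $F$-basis of $\Omega_{F/k}$, arranging $t$ among the $s_i$ when $d_{F/k}(t)\ne 0$. It then proves the auxiliary fact $\dim_k F=|I|$, fixes a $k$-basis $\{f_i:i\in I\}$ of $F$, partitions $I$ into countably many pieces $I_0,I_1,\ldots$ each of full cardinality (placing $t$'s index in $I_0$), and defines $\phi$ explicitly by sending $d_{F/k}(s_i)$ to $0$ on $I_0$ and to $f_{\sigma_j(i)}e_j$ on $I_j$. The surjectivity $D(S)=L$ then comes from the identity $D(\sum_{i\in I_j}k\cdot s_i)=Fe_j$, which exploits the matching of the $k$-basis of $F$ against the $F$-basis of $\Omega_{F/k}$. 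Finite-dimensional $L$ is handled at the end by projecting. Your approach instead absorbs $t$ by passing to $k(t)$ (a clean reduction that avoids the bookkeeping of placing $t$ in $I_0$), and then builds $\phi$ by a transfinite greedy recursion of length $|L|=|F|=\kappa$, at each stage using the hypothesis $\dim_F\Omega_{F/k(t)}=\kappa$ to find a fresh $s_\alpha\in S$ whose differential lies outside the span of already-committed basis vectors. Your method never needs $\dim_k F=|I|$ and treats all countable dimensions of $L$ uniformly; the paper's method is a one-shot explicit formula that makes visible how the $k$-linear structure of $F$ feeds into the $F$-linear structure of $\Omega_{F/k}$. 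Both are standard styles of argument and each is perfectly adequate here.
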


\begin{proof}  Since $S$ has quotient field $F$, the quotient rule for derivations
implies that the $F$-vector space
 $\Omega_{F/k}$ is
generated  by the set $\{d_{F/k}(s):s \in S\}$. Thus some subset of this collection is a basis for $\Omega_{F/k}$, and in fact if $d_{F/k}(t) \ne 0$, then we may choose this basis to contain the element $d_{F/k}(t)$.  (We allow throughout the proof the possiblity that $d_{F/k}(t) = 0$.)  Therefore, there is a  collection $\{s_i:i \in I\}$ of elements in $S$ such that the elements $d_{F/k}(s_i)$, $i \in I$, form a basis of $\Omega_{F/k}$, and if $d_{F/k}(t) \ne 0$, we can assume that $t \in \{s_i:i \in I\}$.

Viewing $F$ as a $k$-vector space, we claim that $\dim_k F = |I| = |F|$.
By assumption,
 $|F| = \dim_{F} \Omega_{F/k} = |I|$.  
 Clearly, $\dim_k F \leq |F| =  |I|$.  On the other hand, 
since $d_{F/k}$ is a $k$-linear map and  $\{d_{F/k}(s_i):i \in I\}$ is linearly independent, then  
  $\{s_i:i \in I\}$ is a  $k$-linearly independent subset of $F$.  
 Therefore,  $\dim_k F = |I| = |F|$.

Since $\dim_k F = |I|$,  we may  let $\{f_i:i \in I\}$ denote a basis
for $F$ over $k$.
We claim that $I$ can be  partitioned into countably many sets
$I_0, I_1,I_2,\ldots$ such that each $I_j$ has $|I|$ elements and if $d_{F/k}(t) \ne 0$, then $t = s_i$ for some $i \in I_0$.  Indeed,
first we may partition $I$ into a disjoint union $I =
\bigcup_{\alpha \in {\cal A}}X_\alpha$ of countably infinite sets
$X_\alpha$ \cite[Theorem 12, p.~40]{Kap}. For each $\alpha \in {\cal
A}$, write $X_\alpha = \{x_{\alpha,j}:j\in {\mathbb{N}} \cup \{0\}\}$.  Then for each $j \in {\mathbb{N}} \cup \{0\}$ we
define $I_j = \{x_{\alpha,j}:\alpha \in {\cal A}\}$.  Now $|I| =
|{\cal A}|\cdot \aleph_0,$ so since $|I|$ is infinite, $|I| = |{\cal
A}| $ \cite[Theorem 16, p.~40]{Kap}, and hence $|I| =  |I_j|$ for
all $j$.
 Thus for
each $j \in {\mathbb{N}} \cup \{0\}$, there exists a bijection $\sigma_j:I_j \rightarrow I$.  Moreover, if $d_{F/k}(t) \ne 0$, then after relabeling the $I_j$ we may assume that $t = s_i$ for some $i \in I_0$.  

 Let $L$ be an $F$-vector space of countably infinite dimension, and write $L =
 \bigoplus_{j=1}^\infty Fe_j$, where
$\{e_1,e_2,e_3,\ldots\}$ is a basis for $L$ over $F$.
 Define an $F$-linear mapping $\phi:\Omega_{F/k}
 \rightarrow L$ on our particular basis elements of $\Omega_{F/k}$ by
 $$\phi(d_{F/k}(s_i)) = \left\{ \begin{array}{ll}
 0 \; &  \mathrm{if}\, i \in I_0 \\
 f_{\sigma_j(i)}e_j \; & \mathrm{if}\,  i  \in I_j{\mbox{ with }} j \in {\mathbb{N}}
\end{array}\right.
 $$
 Recall that  if $d_{F/k}(t) \ne 0$, then $t = s_i$ for some $i \in I_0$.  Thus regardless of whether $d_{F/k}(t) \ne 0$, we have arranged it so  that $\phi(d_{F/k}(t)) = 0$.

 Now
 $D:=\phi\circ d_{F/k}:F \rightarrow L$ is a $k$-linear derivation with $D(t) = 0$.
 Moreover, for each $j \in {\mathbb{N}}$ (we are purposely excluding the case $j=0$ here), we have: $$D(\sum_{i \in I_j}k \cdot s_i) = \sum_{i \in I_j}k
 \cdot
 \phi(d_{F/k}(s_i)) = \sum_{i \in I_j} k \cdot f_{\sigma_j(i)}e_j =
 \sum_{i \in I}k \cdot f_ie_j = Fe_j.$$
 Thus: 
 $$L = \bigoplus_{j=1}^\infty Fe_j = \sum_{j=1}^\infty
D(\sum_{i \in I_j}k \cdot s_i) \subseteq  D( \sum_{i \in I}k \cdot s_i)
\subseteq D(S) \subseteq L,$$ which proves that $D(S) = L$.

Now set $A = S \cap \Ker D$.  Then $D$ is an $A$-module
homomorphism, and since $D(S) = L$, it follows that $S/A \cong L$
as $A$-modules. Since $F$ is a torsion-free divisible $A$-module and
$L$ is an $F$-vector space, then $L$, and hence $S/A$, are
torsion-free divisible $A$-modules.  Moreover, by the construction of $D$, we have $k[t] \subseteq  \Ker D \cap S = A$.
%
Furthermore, in the case where $k$ has characteristic
 $p \ne 0$, let $I$ be a nonzero ideal of $S$, and let $0 \ne s \in
 I$.  Then $D(s^p) = ps^{p-1}D(s) = 0$, so that $0 \ne s^p \in A \cap I$.
Thus $A \subseteq S$ has TGF, and $A \subseteq S$ is strongly analytic.  Finally, if $L'$ is a
finite dimensional $F$-vector space, then there is a surjective
$F$-linear transformation $\psi:L \rightarrow L'$, so that $\psi
\circ D:F \rightarrow L'$ is an $A$-linear derivation with $(\psi
\circ D)(S) = \psi(L) = L'$.
\end{proof}

The small detail in the  lemma allowing us to assume that $t \in A$ has an important consequence in that when $t$ is chosen  a nonunit in $S$, then since $t \in A$, there exist nonzero proper ideals in $S$ that contract to nonzero proper ideals in $A$.  Therefore, when $C = A \setminus \{0\}$, there exist proper ideals of $S$ meeting $C$, and in particular, $A$ is not a field.
  We use this observation in
  Corollary~\ref{existence V cor} 
   in a crucial way.

The following theorem is our main source of examples of strongly twisted subrings in high dimensions.  

\begin{thm}   \label{existence strongly twisted cor} Let $F/k$ be a field
 extension such that $k$ has positive characteristic and at most countably many elements.   Suppose that 
 $F/k$  is a separably generated extension of infinite transcendence degree.  If $S$ is a $k$-subalgebra of $F$ with quotient field $F$ and $K$ is a torsion-free $S$ module of at most countable rank, then there exists a subring $R$ of $S$ that is strongly twisted by $K$.
\end{thm}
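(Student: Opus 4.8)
The plan is to combine Lemma~\ref{dig} with Corollary~\ref{connection}(2). By Lemma~\ref{pre dig}(2), the hypotheses on $F/k$ (positive characteristic, $k$ countable, $F/k$ separably generated of infinite transcendence degree) give $|F| = \dim_F \Omega_{F/k}$, so Lemma~\ref{dig} applies to the $k$-subalgebra $S$ and to any $F$-vector space of at most countable dimension. First I would choose a nonzero nonunit $t \in S$; if $S$ is already a field there is nothing to arrange separately, but in general picking $t$ a nonunit is what forces the subring $A$ produced below to be a proper subring rather than a field, as noted in the remark following Lemma~\ref{dig}. Applying Lemma~\ref{dig} with $L = FK = F \otimes_S K$ (which has at most countable dimension over $F$ since $\rank(K)$ is at most countable) yields a ring $A$ with $k[t] \subseteq A \subseteq S$ such that $S/A$ is torsion-free divisible over $A$, and — crucially, since $k$ has positive characteristic — such that $A \subseteq S$ is a strongly analytic extension.

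Next I would record that $A \subseteq S$ strongly analytic means (by Remark~\ref{old remark}) that $F = Q + S$ and $A = S \cap Q$, where $Q$ is the quotient field of $A$; in particular $S_C = F$ for $C = A \setminus \{0\}$. Now I need a quadratic subring $R$ with $A \subseteq R \subseteq S$, quotient field $F$, and $S/R$ torsion, to feed into Corollary~\ref{connection}(2). The natural candidate is $R = S \cap D^{-1}(K)$, where $D : F \to FK$ is the $A$-linear derivation supplied by Lemma~\ref{dig} with $D(S) = FK$. Since $D$ is $A$-linear, $\Ker D$ is a ring containing $A$, so $R = S \cap D^{-1}(K)$ is a ring containing $A$; it has quotient field $F$ because $A \subseteq R$ and $A$ has quotient field $Q$ with $F = QS$, and a clearing-denominators argument shows every element of $F$ is a fraction of elements of $R$. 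That $S/R$ is torsion follows because $K$ is full in $FK$: for $s \in S$, some $0 \ne a \in A$ has $aD(s) = D(as) \in K$, so $as \in R$. Finally, I would verify directly that $D$ strongly twists $R$ by $K$: condition (a) holds by definition of $R$; condition (b), that $D(F)$ spans $FK$ over $F$, is immediate from $D(S) = FK$ and $F$-linearity; and condition (c), $S \subseteq \Ker D + sS$ for all $0 \ne s \in S$, follows from $S \subseteq A + sS$ (strong analyticity) together with $A \subseteq \Ker D$.

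Alternatively, and perhaps more cleanly, once $A \subseteq S$ is known to be strongly analytic I could verify that $R := S \cap D^{-1}(K)$ is a quadratic extension of $A$-type using (1.4) — the derivation $D' : S \to FK/K$, $s \mapsto D(s) + K$, has kernel exactly $R$, and $S/R$ is both $C$-torsion (full-ness of $K$) and $C$-divisible (image of the $C$-divisible module $S/A$), so (1.4) gives that $R \subseteq S$ is quadratic — and then invoke Corollary~\ref{connection}(2) to conclude $R$ is strongly twisted by (a full submodule of $\Omega_{F/Q}$ isomorphic to) $K$. Either route works; the first is more self-contained and the second more slick.

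The main obstacle is entirely contained in Lemma~\ref{dig}: producing a subring $A$ of $S$ for which $A \subseteq S$ is simultaneously analytic along $A \setminus \{0\}$ \emph{and} has trivial generic fiber. Analyticity comes from splitting off a countable-dimensional chunk of the differential basis and taking $A$ to be the kernel of the resulting derivation intersected with $S$; triviality of the generic fiber is exactly where positive characteristic is used, via the observation that $D(s^p) = p s^{p-1} D(s) = 0$, so $s^p \in A$ for every $s \in S$ and hence every nonzero ideal of $S$ meets $A$ nontrivially. Since Lemma~\ref{dig} is already established, the proof of the theorem itself is essentially bookkeeping: feed $L = FK$ into the lemma, set $R = S \cap D^{-1}(K)$, and check the three defining conditions of "strongly twisted," the only non-formal point being full-ness of $K$ in $FK$, which holds because $K$ is by hypothesis a torsion-free $S$-module and $FK$ is its divisible hull.
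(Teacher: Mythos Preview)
Your proposal is correct and follows essentially the same route as the paper: invoke Lemma~\ref{pre dig}(2) to get $|F|=\dim_F\Omega_{F/k}$, apply Lemma~\ref{dig} with $L=FK$ to produce a strongly analytic extension $A\subseteq S$ and an $A$-linear derivation $D:F\to FK$ with $D(S)=FK$, set $R=S\cap D^{-1}(K)$, and verify conditions (a)--(c) of the definition directly. Your alternative via Corollary~\ref{connection}(2) is not the path the paper takes, but your primary argument matches it; the extra details you supply (choice of a nonunit $t$, the quotient-field and torsion checks for $R$) are correct but not strictly needed for the theorem as stated.
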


\begin{proof}
Since $F$ is a separably generated extension of infinite transcendence degree over $k$, then by Lemma~\ref{pre dig}, $|F| = \dim_F \Omega_{F/k}$.  Let $L = FK$, the divisible hull of $K$.  Then $L$ is an $F$-vector space of at most countable dimension, so by Lemma~\ref{dig} there exists a $k$-subalgebra $A$ of $S$ such that $A \subseteq S$ is a strongly analytic extension and there exists an $A$-linear derivation $D:F \rightarrow L$ such that $D(S) = L$.  Since $A \subseteq S$ is strongly analytic, then $S/A$ is a divisible $A$-module, and consequently, $S = A + aS$ for all $0 \ne a \in A$.  If $0 \ne s \in S$, then since strongly analytic extensions have TGF, there exists $0 \ne a \in A \cap sS$, so that $S = A + sS$.  Moreover, $D$ is $A$-linear, so $A \subseteq \Ker D$, and hence $S \subseteq \Ker D + sS$ for all $0 \ne s \in S$.  Thus, setting $R = S \cap D^{-1}(K)$, we have that $R$ is strongly twisted by $K$.
\end{proof}

Thus 
 if $k$ is a field of positive characteristic  that is a separably generated extension of infinite transcendence degree over a countable subfield, and we choose $S$ between $k[X_1,\ldots,X_n]$ and $k(X_1,\ldots,X_n)$, then for every torsion-free $S$-module $K$ of at most countable rank, there exists a subring $R$ of $S$ that is strongly twisted by $K$.  Note however that the theorem does not assert  that  $R$ is a $k$-algebra.    

\section{Basic properties of twisted subrings}

We give now a few basic properties of twisted subrings, the most fundamental of which is the assertion in Theorem~\ref{pre-construction} that if $R$ is a subring of $S$ twisted by $K$, then  $R$ and $S\star K$ are isomorphic analytically, in the sense of Section 2.  Many of the results in this section depend on properties of analytic isomorphisms developed in  \cite{OlbAR}, which we refer to in the course of the proofs.  We assume throughout this section the following hypothesis.  

\begin{quote} {\it  $S$ is a ring; $C$ is a multiplicatively closed subset of nonzerodivisors of $S$;   
$R$ is a subring of $S$ that is twisted along $C$ by a $C$-torsion-free $S$-module $K$; and $D$ is the derivation that twists $R$.} \end{quote}

We note first that  $R \subseteq S$ is a quadratic, hence integral, extension.  In the case where $S$ is a domain and $R$ is strongly twisted, this extension is {\it subintegral} in the sense of Swan \cite{Swan}, meaning that 
 $R \subseteq S$ is 
integral, the contraction mapping $\Spec(S) \rightarrow \Spec(R)$ is
a bijection and the induced maps on residue field extensions are
isomorphisms (so for every prime ideal $P$ of
$S$, $S_P = R_{P \cap R} + PS_P$).

\begin{thm} \label{pre-construction c}  
  The rings $R$ and $S$ share the same total ring of quotients, and the  extension $R \subseteq S$ is quadratic, and hence integral, but not finite unless $R = S$. 
  If also $S$ is a domain, $K$ is torsion-free and $R$ is strongly twisted by $K$, then $R \subseteq S$ is a subintegral extension.
  \end{thm}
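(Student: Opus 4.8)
The plan is to prove the four assertions in the order stated, drawing on Theorem~\ref{C connection} and the results of \cite{OlbAR} that are quoted in Section 2. Throughout, set $A = S \cap \Ker D$, so that by Theorem~\ref{C connection}(1) the extension $A \subseteq S$ is $C$-analytic, $R \subseteq S$ is quadratic, and $S/R$ is $C$-torsion.

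First I would establish that $R$ and $S$ share the same total ring of quotients. Since $S/R$ is $C$-torsion, every element of $S$ becomes, after multiplication by some $c \in C \subseteq R$, an element of $R$; thus $S$ lies in the localization $R_C$, and hence $S$ and $R$ have the same total ring of quotients (one checks that the nonzerodivisors of $R$ and of $S$ generate the same localization, using that $C$ consists of nonzerodivisors of $S$ lying in $R$). The quadratic claim is immediate from Theorem~\ref{C connection}(1), and quadratic extensions are integral since for each $s \in S$ the relation $s^2 \in sR + R$ is a monic equation of degree $2$ over $R$. For the failure of finiteness unless $R = S$: if $R \subsetneq S$, pick $s \in S \setminus R$; because $S/R$ is $C$-divisible (it is a homomorphic image of the $C$-divisible module $S/A$, as in the proof of Theorem~\ref{C connection}(1)), for every $c \in C$ there is $s_c \in S$ with $s - c s_c \in R$, so $s_c \notin R$ either. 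If $S$ were finite over $R$, say generated by $n$ elements as an $R$-module, a standard determinant-trick/Nakayama argument applied in $S/cS$ for suitable $c \in C$—using that $S = R + cS$ and that $S/R$ has a nonzero $C$-divisible quotient—forces a contradiction; I would instead cite the corresponding statement from \cite{OlbAR} on analytic isomorphisms, since $R \subseteq S$ is an analytic isomorphism (this is Theorem~\ref{pre-construction}, whose statement we may invoke) and a finite analytic isomorphism along $C$ with $S/R$ nonzero $C$-divisible is impossible.

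Finally, for the subintegral claim when $S$ is a domain, $K$ is torsion-free, and $R$ is strongly twisted: by Corollary~\ref{connection}(1), with $A = S \cap \Ker D$ the extension $A \subseteq S$ is strongly analytic, $R \subseteq S$ is quadratic, and $R$ has quotient field $F$. Subintegrality requires that the contraction map $\Spec(S) \to \Spec(R)$ be a bijection with trivial residue field extensions. Since $A \subseteq S$ is strongly analytic, Proposition~\ref{basic cor}(1) applied with $C = A \setminus \{0\}$ gives a bijection between primes of $A$ meeting $C$ and primes of $S$ meeting $C$, with matching residue fields; but every nonzero prime of $S$ meets $C = A \setminus\{0\}$ by the TGF property, and the zero ideal corresponds to the zero ideal since $R$ and $S$ share the quotient field $F$. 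Interposing $R$ between $A$ and $S$, the same contraction/extension correspondence and residue-field identifications pass to $R \subseteq S$ (every nonzero prime of $S$ contracts to a nonzero prime of $R$ because $S/R$ is torsion over $R$, and $S_P = R_{P \cap R} + PS_P$ follows from $S = R + cS$ for $c \in (P \cap R) \setminus \{0\}$). Hence $R \subseteq S$ is subintegral.

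The main obstacle I anticipate is the non-finiteness assertion: showing $R \subsetneq S$ implies $S$ is not module-finite over $R$ requires more than integrality, and the cleanest route is to lean on the analytic-isomorphism machinery of \cite{OlbAR} rather than to reprove a Nakayama-type obstruction by hand—specifically, the fact that a module-finite extension cannot be an analytic isomorphism along $C$ unless it is an equality, because finiteness would force $S/R$ to be a finitely generated $C$-divisible module that is also $C$-torsion, hence zero.
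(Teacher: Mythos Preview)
Your overall structure matches the paper's, and the first two claims (same total ring of quotients, quadratic hence integral) are handled correctly via Theorem~\ref{C connection}(1), just as the paper does.

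The non-finiteness argument, however, is where you go astray, and ironically it is the part you flag as the ``main obstacle.'' The paper's proof is three lines: if $S = Rs_1 + \cdots + Rs_n$, then since $S/R$ is $C$-torsion there exists $c \in C$ with $cs_i \in R$ for all $i$, so $cS \subseteq R$; but $S/R$ is $C$-divisible, so $S = R + cS = R$. Your final sentence captures exactly this, so you do have the argument. The problem is your detour: you claim ``$R \subseteq S$ is an analytic isomorphism (this is Theorem~\ref{pre-construction}),'' but Theorem~\ref{pre-construction} asserts that $R \to S \star K$ is an analytic isomorphism along $C$, not $R \hookrightarrow S$. In fact $R \subseteq S$ is \emph{never} $C$-analytic unless $R = S$, since $S/R$ is $C$-torsion, not $C$-torsion-free. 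Moreover, citing Theorem~\ref{pre-construction} here is circular: its faithful-flatness clause explicitly invokes Theorem~\ref{pre-construction c} for integrality. Drop the detour and just give the elementary torsion-plus-divisible argument directly.

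For subintegrality, your approach via the bijection $\Spec(S) \to \Spec(A)$ and then interposing $R$ is sound. Your residue-field step, though, is stated too quickly: ``$S_P = R_{P \cap R} + PS_P$ follows from $S = R + cS$'' is correct but not immediate, since elements of $S_P$ have denominators in $S \setminus P$, not obviously in $R \setminus (P \cap R)$. The missing link is that $S = R + P$ gives a ring isomorphism $R/(P \cap R) \cong S/P$, and passing to fraction fields yields the residue-field isomorphism, which is equivalent to $S_P = R_{P \cap R} + PS_P$. The paper instead proves the latter identity by hand: it shows $b^{-1} \in R_{P \cap R} + PS_P$ for each $b \in S \setminus P$, using Proposition~\ref{basic cor} to find $a \in (bS \cap A) \setminus P$ and then decomposing $a^{-1}b^{-1} \cdot a = s$ via $S = A + acS$. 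Your route through fraction fields is shorter once the gap is filled.
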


\begin{proof}
To see that $R$ and $S$ share the same total ring of quotients, it suffices to show that $R_C = S_C$.  Let $s \in S$.  Then since 
 $D(s) \in K_C$,  there exists $c \in C$ such that $cD(s) \in K$.  But $D$ is $C$-linear, so $D(cs) \in K$, and hence $cs \in D^{-1}(K) \cap S=R$, proving that $S_C = R_C$. 
By Theorem~\ref{C connection}(1), $R \subseteq S$ is a quadratic extension.  
If this extension is also finite, say, 
$S = Rs_1+\cdots+Rs_n$ for some $s_i \in S$, then  there exists $c \in C$ such that $cs_1,\ldots,cs_n \in R$, and hence $cS \subseteq R$.  But since $S/R$ is $C$-divisible, $S = R + cS$, so this forces $S = R$.

Finally, suppose that $S$ is a domain and $R$ is strongly twisted by $K$.  To see that $R \subseteq S$ is subintegral, we note first that by Proposition~\ref{basic cor}, the contraction mapping $\Spec(S) \rightarrow \Spec(R)$ is a bijection.  To complete the proof, let $P$ be a prime ideal of $S$.  We claim that $S_P = R_{P \cap R} + PS_P$. Since $S/R$ is a divisible $R$-module (this follows from the definition of strongly twisted), $S = R + P$, so it suffices to show that for each $b \in S \setminus P$, $b^{-1} \in R_{P \cap R} + PS_{P}$.   Let $b \in S \setminus P$, and let $A = S \cap \Ker D$. We claim that  $bS \cap A \not \subseteq P$.  For 
 by Corollary~\ref{connection}, $A \subseteq S$ is strongly analytic, so if $bS \cap A \subseteq P$, then by Proposition~\ref{basic cor}, $bS = (bS \cap A)S \subseteq P$, contrary to assumption.  Thus there exist $a \in A \setminus P$ and $s \in S$ such that $a = bs$.  Moreover, choosing $0 \ne c \in P \cap A$, we have since $S = A + acS$ that there exists $d \in A$ and $\sigma \in S$ such that $s = d + ac\sigma$, whence $b^{-1} = sa^{-1} = da^{-1} + c\sigma \in R_P + PS_P$, as claimed.  Therefore, $R \subseteq S$ is a subintegral extension.   
\end{proof}

As a consequence of the fact that $R \subseteq S$ is integral, along with the fact that $S/R$ is $C$-torsion and $C$-divisible, we obtain information about $\Spec(R)$:

\begin{thm} \label{prime correspond} \label{C-bijection}  The mappings $P \mapsto PS$ and $Q \mapsto Q \cap R$ define a
one-to-one correspondence between prime ideals $P$ of $R$ meeting
${{\it C}}$ and prime ideals $Q$ of $S$ meeting ${{\it C}}$. Under
this correspondence, maximal ideals of $R$ meeting $C$ correspond to
maximal ideals of $S$ meeting $C$.
 If also $S$ is a domain, then
the contraction mapping $\Spec(S) \rightarrow \Spec(R)$ is a bijection.
\end{thm}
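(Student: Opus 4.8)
The plan is to extract from the earlier material the structural facts that drive everything and then run a Cohen--Seidenberg-style argument adapted to the present setting. By Theorem~\ref{C connection}(1), with $A = S \cap \Ker D$ the extension $A \subseteq S$ is $C$-analytic, the extension $R \subseteq S$ is quadratic---hence integral, so that lying over and incomparability are available---and $S/R$ is $C$-torsion; moreover, since $A \subseteq S$ is $C$-analytic we have $S = A + cS$, and $A \subseteq R$ gives $S = R + cS$ for every $c \in C$. (Note also that $C \subseteq R$, since $D$ is $C$-linear.) I would first establish the correspondence on primes meeting $C$, then read off the assertion about maximal ideals, and finally dispose of the domain case by treating separately those primes of $R$ that miss $C$.

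For the correspondence, I would fix a prime $P$ of $R$. Since $R \subseteq S$ is integral, lying over yields a prime $Q$ of $S$ with $Q \cap R = P$; because $PS \subseteq Q$, this already forces $PS \cap R = P$. Now suppose in addition that $P$ meets $C$, say $c \in P \cap C$. Then $S = R + cS \subseteq R + PS$, so $S = R + PS$, and consequently the ring surjection $R \to S/PS$ has kernel $PS \cap R = P$ and induces a ring isomorphism $R/P \cong S/PS$; in particular $S/PS$ is a domain, so $PS$ is a prime of $S$, and it meets $C$ because $c \in P \subseteq PS$. In the other direction, if $Q$ is a prime of $S$ meeting $C$, then $Q \cap R$ is a prime of $R$ with $(Q \cap R)\cap C = Q \cap C \ne \emptyset$, since $C \subseteq R$. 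That $P \mapsto PS$ and $Q \mapsto Q \cap R$ are mutually inverse on the primes meeting $C$ then follows from $PS \cap R = P$ together with the identity $(Q \cap R)S = Q$: writing $P = Q \cap R$, the isomorphism $R/P \cong S/PS$ identifies the prime $Q/PS \supseteq PS$ of $S/PS$ with a prime of $R/P$ that contracts to the zero ideal of $R/P$ (because $Q \cap R = P$), forcing $Q = PS$; alternatively one may simply invoke incomparability for the integral extension $R \subseteq S$. The statement on maximal ideals is immediate from $R/P \cong S/PS$: if $\mathfrak{m}$ is maximal in $R$ and meets $C$ then $R/\mathfrak{m} \cong S/\mathfrak{m}S$ is a field and $\mathfrak{m}S$ is maximal, while if $M$ is maximal in $S$ and meets $C$ then $M = (M \cap R)S$ by the correspondence and $R/(M \cap R) \cong S/M$ is a field, so $M \cap R$ is maximal.

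Finally, suppose $S$ is a domain, so that $R$ is a domain as well. The contraction map $\Spec(S) \to \Spec(R)$ is onto by lying over, and for injectivity it is enough to show that each prime $P$ of $R$ has exactly one prime of $S$ above it. If $P$ meets $C$ this was obtained above, the unique such prime being $PS$: any prime $Q$ of $S$ with $Q \cap R = P$ contains $PS$ and corresponds, under $R/P \cong S/PS$, to a prime of $R/P$ contracting to zero, so $Q = PS$. If $P$ misses $C$, then since $S/R$ is $C$-torsion and every $c \in C$ becomes a unit in $R_P$, we get $(S/R)_P = 0$, hence $R_P = S_P$; therefore $S \otimes_R \kappa(P) = S_P/PS_P = \kappa(P)$, which has a single prime, so again there is a unique prime of $S$ over $P$. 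Hence $\Spec(S) \to \Spec(R)$ is a bijection.

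I expect the one point genuinely needing care to be the claim that, for $P$ meeting $C$, the extended ideal $PS$ is prime with $PS \cap R = P$. One cannot get this by applying Proposition~\ref{basic cor} to $R \subseteq S$ directly, because that extension is typically not $C$-analytic---its cokernel $S/R$ is $C$-torsion, not $C$-torsion-free---so $A$, not $R$, is the ring to which the $C$-analytic machinery applies. The remedy is to combine integrality of $R \subseteq S$, which via lying over pins down $PS \cap R$, with the $C$-divisibility hidden in $S = R + PS$, which exhibits $S/PS$ as a homomorphic image of $R/P$ and hence a domain.
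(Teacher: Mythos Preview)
Your argument is correct and follows essentially the same route as the paper's: both use integrality of $R\subseteq S$ together with the $C$-divisibility $S=R+PS$ to get $PS\cap R=P$ and $R/P\cong S/PS$, and both handle the domain case by splitting on whether $P$ meets $C$, using that $S/R$ is $C$-torsion to conclude $R_P=S_P$ when it does not. Your fiber formulation $S\otimes_R\kappa(P)=\kappa(P)$ is a slightly cleaner packaging of the paper's explicit verification that $R_P=S_{P_1}$, but the underlying mechanism is the same.
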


\begin{proof} In the proof, we use only that $R \subseteq S$ is integral (a fact given by Theorem~\ref{pre-construction c}) and that   $S/R$ is $C$-divisible and $C$-torsion.
  Let $P$ be a prime ideal of $R$ meeting ${{\it C}}$.
Since $S/R$ is $C$-divisible, $S = R + PS$, and hence $S/PS \cong
R/(PS \cap R)$.  If $Q$ is any prime ideal of $S$ lying over $P$,
then $P \subseteq PS \cap R \subseteq Q \cap R = P$, so that $P = PS
\cap R$.  Hence $S/PS \cong R/P$, and it follows that $PS$ is a
prime ideal of $S$. This shows also that if in addition $P$ is a
maximal ideal of $R$, then $PS$ is a maximal ideal of $S$.  On the
other hand, if $L$ is a prime ideal of $S$ meeting $C$, then 
 since for $c \in L \cap C$,  $S  = R + cS$, it follows  that
 $L = (L \cap R)S$.
Also, since maximal ideals of an
integral extension contract to maximal ideals, it follows that maximal ideals of $R$ meeting $C$ correspond to
maximal ideals of $S$ meeting $C$.

Now suppose that $S$ is a domain. We show that the mapping $\Spec(S) \rightarrow \Spec(R)$ is a bijection.  
To this end,  we claim first that for each prime ideal $P$ of $R$ not meeting $C$, $R_P = S_{P_1}$, where $P_1$ is any prime ideal of $S$ lying over $P$, and hence  there is only one such prime ideal $P_1$.   (Note that since $S/R$ is $C$-torsion, it follows that $R$ and $S$ have the same quotient field so we may view all the localizations of $R$ and $S$ as subsets of this field.)
 Let $P$ be a prime ideal of $R$ not meeting $C$, and let $P_1$ be a prime ideal of $S$ lying over $P$.  Since $P \cap C$ is empty, $R_C \subseteq R_P$, and since $R_C = S_C$, we have $S \subseteq R_P$.  Thus $SR_P = R_P$, and it suffices to show that $SR_P = S_{P_1}$.  In fact, since $R_P \subseteq S_{P_1}$, we need only show that $S_{P_1} \subseteq SR_P$, or, more precisely, that each $b \in S \setminus P_1$ is a unit in $SR_P$.    Let $b \in S \setminus P_1$.  Since $SR_P = R_P$, $SR_P$ is a quasilocal ring with maximal ideal $PR_P$.  If $b \in PR_P$, then $b \in P_1S_{P_1} \cap S = P_1$, a contradiction.  Therefore, $b$ is in the quasilocal ring $SR_P$, but not in its maximal ideal, so $b$ is a unit in $SR_P$, and we have proved that $R_P = S_{P_1}$.

Now we claim that $\Spec(S) \rightarrow \Spec(R)$ is a bijection.  Since $R \subseteq S$ is integral, and hence
each prime ideal of $R$ has a prime ideal of $S$ lying  over it,
 the contraction mapping $\Spec(S) \rightarrow \Spec(R)$ is surjective.  To see that it is injective, let $P$ be a prime ideal of $R$,
  and let  $P_1$ be a  prime ideal of $S$ lying over $P$. If $P \cap C$ is empty, then by what we have established above, $R_P = S_{P_1}$, so that $P_1$ is the only prime ideal of $S$ lying over $P$.
    Otherwise, if $P \cap C$ is nonempty, then  by the first claim of the theorem, $P_1 = (P_1 \cap R)S = PS$, so that $P_1$ is the unique prime ideal of $S$ lying over $P$.  This completes the proof.
\end{proof}

The next theorem, which relies in a crucial way on \cite[Lemma 3.4]{OlbAR}, establishes a correspondence between submodules of $K_C/K$ and rings between $R$ and $S$.

\begin{thm} \label{correspondence}   There is a one-to-one correspondence between intermediate rings $R \subseteq T \subseteq S$ and $S$-submodules $L$ of $K_C$ with $K \subseteq L \subseteq K_C$ given by: $$T \mapsto \sum_{t \in T}SD(t) \:\:\:\:{\mbox{ and }} \:\:\:\: L \mapsto S \cap D^{-1}(L).$$
\end{thm}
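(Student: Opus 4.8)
The plan is to verify that the two assignments $T \mapsto \Phi(T) := \sum_{t\in T} SD(t)$ and $L \mapsto \Psi(L) := S \cap D^{-1}(L)$ are mutually inverse order-preserving maps between the poset of intermediate rings $R\subseteq T\subseteq S$ and the poset of $S$-submodules $L$ with $K\subseteq L\subseteq K_C$. First I would check that both maps are well defined: if $R\subseteq T\subseteq S$, then $\Phi(T)$ is an $S$-submodule of $K_C$ (since $D(t)\in D(S_C)\subseteq K_C$ for $t\in T\subseteq S$), it contains $\Phi(R) = \sum_{r\in R}SD(r)$, and since $D(S_C)$ generates $K_C$ as an $S_C$-module and $R_C = S_C$ (by Theorem~\ref{pre-construction c}, or directly from $C$-torsionness of $S/R$), we get $K_C = \sum_{r\in R}S_C D(r) = (\Phi(R))_C$; combined with the fact, to be recalled from the twisting hypothesis, that $K\subseteq \Phi(R)$, this shows $K\subseteq \Phi(R)\subseteq\Phi(T)\subseteq K_C$ as required. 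Wait — I should be careful: the inclusion $K\subseteq\Phi(R)$ is exactly the content of \cite[Lemma 3.4]{OlbAR}, or can be extracted from the definition of twisting; I would cite it. Conversely, for $\Psi(L) = S\cap D^{-1}(L)$: since $D$ is a derivation, $D^{-1}(L)$ is a ring (as noted after the definition of twisted subrings), so $\Psi(L)$ is a subring of $S$ containing $R = S\cap D^{-1}(K)$ (because $K\subseteq L$), and it is contained in $S$; hence $R\subseteq\Psi(L)\subseteq S$.

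Next I would establish the two round-trip identities. For $L$ an $S$-submodule with $K\subseteq L\subseteq K_C$, I must show $\Phi(\Psi(L)) = L$, i.e. $\sum_{t\in S\cap D^{-1}(L)} SD(t) = L$. The inclusion $\subseteq$ is immediate since $D(t)\in L$ for such $t$ and $L$ is an $S$-module. For $\supseteq$, take $\ell\in L\subseteq K_C$; using condition (b) of the twisting definition, $K_C$ is generated as an $S_C$-module by $D(S_C)$, and since $S_C = R_C$ and every element of $S_C$ is of the form $s/c$ with $s\in S$, $c\in C$, one can write $\ell$ as an $S_C$-combination $\ell = \sum (s_i/c_i) D(u_i)$ with $u_i\in S$; clearing denominators against a common $c\in C$ with $c\ell\in L$ and using $C$-linearity of $D$ (so $D(cu_i) = cD(u_i)$), I want to produce elements of $S\cap D^{-1}(L)$ witnessing $\ell$. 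This is the step where I expect to lean most heavily on \cite[Lemma 3.4]{OlbAR}: the paper explicitly flags that the theorem "relies in a crucial way" on that lemma, which presumably supplies exactly the surjectivity/generation statement that $L = \sum_{t\in S\cap D^{-1}(L)} SD(t)$, perhaps via the $C$-divisibility of $S/R$ (i.e. $S = \Ker D + cS$) to adjust representatives into $D^{-1}(L)$. I would invoke it rather than reprove it.

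For the other round trip, given an intermediate ring $R\subseteq T\subseteq S$, I must show $\Psi(\Phi(T)) = T$, i.e. $S\cap D^{-1}\!\big(\sum_{t\in T}SD(t)\big) = T$. The inclusion $\supseteq$ is clear: for $t\in T$, $D(t)\in\Phi(T)$ and $t\in S$. For $\subseteq$, suppose $s\in S$ with $D(s)\in\Phi(T) = \sum_{t\in T}SD(t)$; I need $s\in T$. Write $D(s) = \sum_i \sigma_i D(t_i)$ with $\sigma_i\in S$, $t_i\in T$. The idea is to manufacture an element of $T$ differing from $s$ by something in $\Ker D$, then absorb the kernel part. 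Here I'd use that $T\subseteq S$ is itself a quadratic extension of $R$ (every $R$-submodule of $S$ between $R$ and $S$ is a ring, by (1.4), since $R\subseteq S$ is quadratic), so $\sigma_i t_i \in \sigma_i T\subseteq \sigma_i R + t_i R + R \subseteq T$... hmm, that needs $\sigma_i\in R$ which need not hold. Better: since $S = R + cS$ for all $c\in C$ and $S_C = R_C$, reduce to the case $\sigma_i\in R$ up to a $C$-multiple, exploiting that $D(c s) = cD(s)$ and $cs\in R\subseteq T$ for suitable $c$, then divide back. The cleanest route is again to apply \cite[Lemma 3.4]{OlbAR}, which I expect characterizes intermediate rings precisely as the $\Psi(L)$, making $\Psi\circ\Phi = \mathrm{id}$ formal once $\Phi\circ\Psi = \mathrm{id}$ is known and both maps are order-preserving with $\Psi$ injective on its domain. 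The main obstacle, then, is not the bookkeeping but correctly marshalling Lemma 3.4 of \cite{OlbAR}: the genuinely nontrivial content — that the module $\Phi(T)$ recovers enough of $K_C$ and that $D^{-1}$ of it collapses back to $T$ — is exactly what that lemma is set up to provide, and the proof here should be a short deduction from it together with conditions (a), (b), (c) of twisting and the $C$-torsion/$C$-divisibility of $S/R$.
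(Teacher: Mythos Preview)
Your overall strategy matches the paper's, and your handling of well-definedness and of the round trip $\Phi(\Psi(L))=L$ is fine: both you and the paper obtain that direction directly from \cite[Lemma 3.4]{OlbAR}. The gap is in the other round trip, $\Psi(\Phi(T))\subseteq T$.

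Your fallback argument --- that once $\Phi\circ\Psi=\mathrm{id}$ and both maps are order-preserving with $\Psi$ injective, $\Psi\circ\Phi=\mathrm{id}$ is ``formal'' --- is not valid. What you have is a Galois connection (the easy inclusions $T\subseteq\Psi(\Phi(T))$ and $\Phi(\Psi(L))\subseteq L$), and knowing $\Phi\circ\Psi=\mathrm{id}$ only tells you every $L$ is closed; it does not force every $T$ to be closed. A two-element set mapping to a singleton already gives a counterexample. So you really do need to prove $\Psi(\Phi(T))\subseteq T$ directly, and your first attempt (trying to push $\sigma_i$ into $R$ and use quadraticity) does not close, as you noticed.

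The missing idea, which the paper supplies, is to factor $D$ through the universal derivation. With $A=S\cap\Ker D$ and $d=d_{S_C/A_C}$, \cite[Lemma 3.4]{OlbAR} gives a surjection $\alpha:\Omega_{S_C/A_C}\to K_C$ with $D=\alpha\circ d$ and, crucially, $\alpha^{-1}(K)=\Omega(R):=\sum_{r\in R}Sd(r)$. Now if $s\in S$ has $D(s)=\sum_i s_iD(t_i)$ with $t_i\in T$, then $d(s)-\sum_i s_i d(t_i)\in\Ker\alpha\subseteq\Omega(R)\subseteq\Omega(T)$, so $d(s)\in\Omega(T)$. The point is that for the \emph{universal} derivation $d$, the identity $T=S\cap d^{-1}(\Omega(T))$ holds for every intermediate ring $T$ --- this is part of Theorem~\ref{C connection}(2) (equivalently \cite[Proposition 3.3]{OlbAR}). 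Hence $s\in T$. Without passing through $d$, you have no control over $\Ker D$ relative to $T$, which is exactly where your direct argument stalls.
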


\begin{proof}  Let $A = S \cap \Ker D$, and let $d = d_{S_C/A_C}$.  For each ring $T$ with $R \subseteq T \subseteq S$, define $\Omega(T) = \sum_{t \in T}Sd(t)$ and $L(T) = \sum_{t \in T}SD(t)$.
 By Theorem~\ref{C connection}, $A \subseteq S$ is a $C$-analytic extension. 
 Also, since $R$ is twisted by $K$ along $C$, then $R = S \cap D^{-1}(K)$ and $D(S_C)$ generates $K_C$ as an $S_C$-module.  
 In \cite[Lemma 3.4]{OlbAR}, it is shown that these two facts, along with the fact that $K$ is $C$-torsion-free, imply that  there exists a surjective $S_C$-module homomorphism $\alpha:\Omega_{S_C/A_C} \rightarrow K_C$ such that $D = \alpha \circ d$ and 
 for each $S$-module $L$ of $K_C$ with $L_C = K_C$, when $T = S \cap D^{-1}(L)$, then 
  $\alpha(\Omega(T)) = L$ and $\Omega(T)= \alpha^{-1}(L)$.

 To prove the theorem it suffices to show that for all 
  rings $T$ with $R \subseteq T \subseteq S$, we have 
  $T = S\cap D^{-1}(L(T))$, and for all
     $S$-modules $L$ with  $K \subseteq L \subseteq K_C$, we have $L = L(S \cap D^{-1}(L))$. 
Let $T$ be a ring between $R$ and $S$. 
Clearly, $T \subseteq S \cap D^{-1}(L(T))$.  Conversely, suppose that $s \in S \cap D^{-1}(L(T))$.  Then there exist $s_1,\ldots,s_n \in S$ and $t_1,\ldots,t_n \in T$ such that $D(s) = \sum_i s_i D(t_i)$.  
  Therefore, since $D = \alpha \circ d$, we have $\alpha(d(s)) = \alpha(\sum_i s_id(t_i))$.  Thus since  $\Ker \alpha \subseteq\alpha^{-1}(K) = \Omega(R)$, we have  $d(s) - \sum_i s_id(t_i) \in \Omega(R) \subseteq \Omega(T)$.  Therefore, $d(s) \in \Omega(T)$. As observed in the proof of Theorem~\ref{C connection}(2), $T = S \cap d^{-1}(\Omega(T))$, so  $s \in T$.  This proves that $T = S \cap D^{-1}(L(T))$. 
Finally, let $L$ be an $S$-module between $K$ and $K_C$.  We claim that $L = L(T)$, where $T = S \cap D^{-1}(L)$.  But this is immediate, since as noted above, $L = \alpha(\Omega(T)) = \sum_{t \in T}S\alpha(d(t)) = \sum_{t\in T}SD(t) = L(T)$.
  %
\end{proof}



Next we  show, in what is our main structure theorem for twisted subrings, that a twisted subring behaves analytically like an idealization.    The theorem is based on Proposition 3.5 in \cite{OlbAR}, and relies on the following lemmas. 

\begin{lem} \label{ontoness}
The derivation $D$   induces an isomorphism of $R$-modules given by $$\alpha:S/R \rightarrow K_C/K:s+R \mapsto D(s) + K.$$
\end{lem}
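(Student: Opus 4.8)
The plan is to exhibit the map $\alpha$ explicitly and check it is a well-defined $R$-module isomorphism, using the facts already assembled. First I would verify well-definedness: if $s + R = s' + R$, then $s - s' \in R = S \cap D^{-1}(K)$, so $D(s-s') \in K$, and since $D$ is additive, $D(s) + K = D(s') + K$. Thus $\alpha$ is a well-defined additive map, and it is $R$-linear because $D$ is a derivation vanishing on $R$: for $r \in R$, $D(rs) = rD(s) + sD(r)$, but $D(r) \in K$, so $D(rs) + K = rD(s) + K$, i.e. $\alpha(rs + R) = r\alpha(s+R)$ (here we use that $K$ is an $S$-module, so multiplication by $r$ makes sense on $K_C/K$).

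Next I would check surjectivity. An arbitrary element of $K_C/K$ has the form $\kappa + K$ with $\kappa \in K_C$. Since $D(S_C)$ generates $K_C$ as an $S_C$-module (condition (b) of being twisted) and $S_C = R_C = $ the total quotient ring shared by $R$ and $S$ (established in Theorem~\ref{pre-construction c}, or directly: for $s \in S$ there is $c \in C$ with $cs \in R$), we can write $\kappa = \sum_i \frac{x_i}{c_i} D(s_i)$ with $x_i \in S_C$, $s_i \in S$, $c_i \in C$. Clearing denominators, there is $c \in C$ with $c\kappa = D(\sigma)$ for some $\sigma \in S$ — more carefully, $c\kappa \in \sum_i S D(s_i) = \sum_i S_C D(s_i) \cap (\text{something})$; the cleanest route is: since $D$ is $C$-linear, $D(c\sigma) = cD(\sigma)$, and $C$-divisibility of $K_C/K$ (which holds since $K_C$ is $C$-divisible) lets us adjust. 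Actually the simplest argument: given $\kappa \in K_C$, pick $c \in C$ with $c\kappa \in K$; separately, since $D(S_C) = D(S)_C$ generates $K_C$, and $K_C/K$ is $C$-divisible, there exists $\kappa' \in K_C$ with $\kappa = c\kappa' \pmod K$ — then reduce to representing $\kappa'$. Rather than belabor this, the key point I would make is that $D(S) + K$ already equals $K_C$: indeed for any $\kappa \in K_C$, write $\kappa = \sum s_i' D(s_i)$ with $s_i' \in S_C$; choose $c \in C$ clearing all the $s_i'$ into $S$, so $c\kappa \in \sum S D(s_i) = D(\sum S s_i) \subseteq D(S) \pmod{\text{(corrections in }K)}$ using the product rule and $D(S) \subseteq K_C$... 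The honest route is to invoke that $S = A + cS$ for all $c \in C$ where $A = S \cap \Ker D$, combined with $D(S) \subseteq K_C$; I would spell out that every element of $K_C$ is congruent mod $K$ to $D(s)$ for some $s \in S$ by a clearing-denominators argument, so $\alpha$ is onto.

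Finally, injectivity: if $\alpha(s + R) = 0$, then $D(s) \in K$, so $s \in S \cap D^{-1}(K) = R$, i.e. $s + R = 0$. Combined with well-definedness, $R$-linearity, and surjectivity, this shows $\alpha$ is an isomorphism of $R$-modules. I expect the main obstacle to be the surjectivity step — specifically, passing from ``$D(S_C)$ generates $K_C$ over $S_C$'' to ``$D(S)$ generates $K_C$ modulo $K$ over $S$'' — which requires combining condition (b) with the $C$-divisibility of $S/A$ (equivalently $S = \Ker D + cS$ for all $c \in C$, condition (c)) and the fact that $K_C/K$ is $C$-divisible; everything else is routine bookkeeping with the definition of a twisted subring. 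I would also remark that $\alpha$ is in fact $S$-linear once $S/R$ is given its natural $S$-module structure (from quadraticity, via (1.4)), though the statement only claims $R$-linearity.
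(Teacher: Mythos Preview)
Your approach matches the paper's: well-definedness and injectivity are immediate from $R = S \cap D^{-1}(K)$, and $R$-linearity follows from the product rule together with $D(R) \subseteq K$. The only substantive step is surjectivity, and there your several attempts never close; you correctly isolate conditions (b) and (c) as the ingredients, but ``a clearing-denominators argument'' is a promise rather than a proof, and the side remarks about $C$-divisibility of $K_C/K$ point in the wrong direction.

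Here is the computation the paper carries out. Given $y \in K_C$, use (b) and $C$-linearity of $D$ to write $y = \sum_i s_i D(x_i)$ with $s_i \in S$ and $x_i \in S_C$. Pick $c \in C$ with $cD(x_i) \in K$ for all $i$, and use (c) to write each $s_i = a_i + c\sigma_i$ with $a_i \in \Ker D$ and $\sigma_i \in S$. Then $s_i D(x_i) = D(a_i x_i) + \sigma_i \cdot cD(x_i)$, so $y + K = D\bigl(\sum_i a_i x_i\bigr) + K$. The element $\sum_i a_i x_i$ lies a priori only in $S_C$, but condition (c) forces $D(S_C) = D(S)$: for $s/e \in S_C$ write $s = b + e\tau$ with $b \in \Ker D$, and then $D(s/e) = D(s)/e = D(\tau) \in D(S)$. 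That identity $D(S_C) = D(S)$ is exactly the ``clearing denominators'' mechanism you were reaching for; once it is stated, surjectivity is immediate.
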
 

\begin{proof}   Since $R = S \cap D^{-1}(K)$, it is clear that $\alpha$ is well-defined and injective.  To see that $\alpha$ is onto, let $y \in K_C$.  Then since $K_C$ is generated as an $S_C$-module by $D(S_C)$ and $D$ is $C$-linear, we may write $y = \sum_i {s_i}D(x_i)$, where  $s_i \in S$ and $x_i \in S_C$.  Choose $c \in C$ such that $cD(x_i) \in K$ for each $i$.  
Since $S \subseteq \Ker D  + cS$, we may for
each $i$ write $s_i = a_i + c\sigma_i$, where $a_i \in \Ker D$ and
$\sigma_i \in S$.  Thus, since $a_1,\ldots,a_n \in \Ker D$,  we have: $$y + K = \sum_{i}D(a_ix_i) +
\sum_{i}\sigma_i cD(x_i) +K =
D(\sum_i a_ix_i) + K.$$ Therefore, $D$ maps  onto $K_C/K$. 
\end{proof}

\begin{lem} \label{one case} Let $f:R \rightarrow S \star K$ be the ring homomorphism defined by $f(r) = (r,D(r))$ for all $r \in R$.  
 If $I$ is an ideal of $R$ meeting $C$ and $I = IS \cap R$, then $f(I)(S \star K)  
= (IS) \star K.$
\end{lem}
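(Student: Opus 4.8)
The plan is to prove the two set-theoretic inclusions between $f(I)(S\star K)$ and $(IS)\star K$, viewing both as additive subgroups of $S\oplus K$ and recalling that $(IS)\star K$ is just $IS\oplus K$. The inclusion $f(I)(S\star K)\subseteq (IS)\star K$ is a direct computation: a generic element of the ideal $f(I)(S\star K)$ is a finite sum $\sum_j(a_j,D(a_j))(s_j,k_j)=\big(\sum_j a_js_j,\ \sum_j a_jk_j+\sum_j s_jD(a_j)\big)$ with $a_j\in I$, $s_j\in S$, $k_j\in K$. The first coordinate lies in $IS$; in the second, $\sum_j a_jk_j\in IK\subseteq K$, and since $I\subseteq R=S\cap D^{-1}(K)$ we have $D(a_j)\in K$, so $\sum_j s_jD(a_j)\in K$ too. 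Hence the element lies in $IS\oplus K$.

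For the reverse inclusion I would first reduce to showing that $0\oplus K\subseteq f(I)(S\star K)$. Indeed, given $x\in IS$, write $x=\sum_j a_js_j$ with $a_j\in I$, $s_j\in S$; then $\sum_j f(a_j)(s_j,0)=(x,\kappa)\in f(I)(S\star K)$ with $\kappa:=\sum_j s_jD(a_j)\in K$, so for any $k\in K$ we have $(x,k)=(x,\kappa)+(0,k-\kappa)$, which lies in $f(I)(S\star K)$ as soon as $0\oplus K$ does.

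The substance of the argument is therefore the inclusion $0\oplus K\subseteq f(I)(S\star K)$, and this is where both hypotheses on $I$ are used. Fix $c\in I\cap C$ and $k\in K$. By Lemma~\ref{ontoness} the map $S/R\to K_C/K$, $s+R\mapsto D(s)+K$, is surjective, so I can choose $s\in S$ with $D(s)-k/c\in K$, say $D(s)=k/c+\mu$ with $\mu\in K$. Since $D$ is $C$-linear, $D(cs)=cD(s)=k+c\mu\in K$, whence $cs\in S\cap D^{-1}(K)=R$; as also $cs\in IS$, the hypothesis $I=IS\cap R$ forces $cs\in I$. Then, using $D(c)=0$, the identity
\[
f(cs)(1,0)-f(c)(s,\mu)=(cs,D(cs))-(cs,c\mu)=(0,k)
\]
exhibits $(0,k)$ as a member of the ideal $f(I)(S\star K)$, since both $cs$ and $c$ lie in $I$. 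I expect this last step to be the main obstacle: one has to invoke Lemma~\ref{ontoness} at the localized level — solving $D(s)\equiv k/c$ rather than $D(s)\equiv k$ — and then recognize that multiplying back by $c$ both clears the denominator and lands $cs$ inside $IS\cap R$, which is precisely why the hypothesis $I=IS\cap R$ is imposed (and, e.g., why $I=cR$ need not work).
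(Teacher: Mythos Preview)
Your proof is correct and follows essentially the same route as the paper: both arguments hinge on Lemma~\ref{ontoness} to exhibit $(0,k)$ as an element of $f(I)(S\star K)$, and your treatment of that step (solve $D(s)\equiv k/c$ modulo $K$, multiply by $c$, use $I=IS\cap R$ to land $cs\in I$) is nearly identical to the paper's. The only difference is in handling the $IS$-component: the paper works to realize $(x,0)$ exactly, invoking the decomposition $S=A+cS$ from the $C$-analytic extension $A\subseteq S$ to write $x=a+ct$ with $a\in I\cap A$ and $D(a)=0$; you more economically observe that some $(x,\kappa)$ with $\kappa\in K$ is already in the ideal via $\sum_j f(a_j)(s_j,0)$ and then absorb the discrepancy into the $(0,k)$ case. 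Your reduction is a bit cleaner and avoids a second appeal to the analytic extension.
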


\begin{proof} 
 Clearly, $f(I)(S\star K) \subseteq (IS) \star K$.  To verify the reverse inclusion,
let $x
\in IS$, and let $k \in K$.  We show that $(x,0)$ and $(0,k)$ are
both in $f(I)(S \star K)$, since this is enough to prove the
claim. Let $A = S \cap \Ker D$.  Choose   $c \in I \cap {{\it C}}$. Then $S =A + cS$, so since $I = IS \cap R$,  it follows that  $x = i + cs$ for some $i \in I$ and $s \in S$.
Since $S/A$ is $C$-divisible, we may write $i = a + c\sigma$ for
some $a \in A$ and $\sigma \in S$.  Thus since we have assumed $I = IS \cap R$, we have $a = i - c\sigma \in IS \cap A
= (IS \cap R) \cap A = I \cap A$.  Consequently, setting $t =
\sigma + s$, we have $x = i + cs = a + c(\sigma + s) = a + ct$.
Since $D$ is $A$-linear, then $D(a) = 0$, and: $$(x,0) = (a,0) + (ct,0) = f(a) + f(c)(t,0)  \in f(I)(S\star K).$$ 
Next,  we show that $(0,k) \in f(I)(S \star K)$.
By Lemma~\ref{ontoness}, $K_C = D(S) + K$, so since $D$ is $C$-linear, there exist $s_2 \in S$ and $k_2 \in K$ such $k = D(cs_2) + ck_2$.  Since $R = S \cap D^{-1}(K)$, it follows that $y:=cs_2 \in IS \cap R = I$.  
Thus  $f(y) = (y,D(y))  \in f(I) (S
\star K)$. Consequently: $$(0,k) = (y,D(y)) + (-y,ck_2) = (y,D(y))+f(c)
 (-s_2,k_2) \in f(I) (S \star K).$$  This proves $f(I)
 (S \star K) = (IS) \star K$.
\end{proof}

\begin{thm} \label{pre-construction}  
The mapping $f:R \rightarrow S \star K:r \mapsto (r,D(r))$
 is an analytic isomorphism along $C$.  If also $S$ is a domain, $K$ is torsion-free and $R$  is strongly twisted by $K$, then this map is faithfully flat.  
\end{thm}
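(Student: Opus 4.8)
The plan is to check first that $f$ is a ring homomorphism, then to prove it is an analytic isomorphism along $C$ by analysing $f$ modulo each $c\in C$, and finally, under the extra hypotheses of the second statement, to deduce faithful flatness from the analytic isomorphism by way of a change-of-rings computation for $\mathrm{Tor}$. Throughout I will use Lemmas~\ref{ontoness} and \ref{one case}, and for the strongly twisted case also Corollary~\ref{connection}, Theorem~\ref{pre-construction c} and Remark~\ref{old remark}.

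First, $f$ is well defined because $r\in R=S\cap D^{-1}(K)$ forces $D(r)\in K$, and it is multiplicative by the Leibniz rule for $D$; this is the homomorphism already named in Lemma~\ref{one case}. Each $c\in C$ is a nonzerodivisor on $S$ and on $K$, hence on $S\star K$, so the induced map $f_c\colon R/cR\to (S\star K)/c(S\star K)$ is defined, and it is enough to show $f_c$ is bijective for every $c\in C$. For injectivity, if $f(r)=(r,D(r))\in c(S\star K)$ then $r=cs$ with $s\in S$ and $D(r)=cD(s)\in cK$; since $K_C$ is $C$-torsion-free this gives $D(s)\in K$, so $s\in S\cap D^{-1}(K)=R$ and $r\in cR$. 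For surjectivity, given $(s,k)\in S\star K$, condition (c) lets us write $s=a+c\sigma$ with $a\in S\cap\Ker D$ and $\sigma\in S$, while Lemma~\ref{ontoness} gives $K_C=D(S)+K$, hence (multiplying by the unit $c$ of $K_C$ and using $C$-linearity of $D$) $K_C=D(cS)+cK$; writing $k=D(cs_1)+ck_1$ with $s_1\in S$ and $k_1\in K$, one has $D(cs_1)=k-ck_1\in K$, so $cs_1\in R$, and the element $r:=a+cs_1\in R$ satisfies $f(r)\equiv(s,k)\pmod{c(S\star K)}$. (Equivalently, this is the module-theoretic content packaged in Lemma~\ref{one case}, which computes $f(I)(S\star K)=(IS)\star K$ for ideals $I=IS\cap R$ of $R$ meeting $C$, combined with Proposition~\ref{basic cor}, as in \cite[Proposition 3.5]{OlbAR}.) Thus $f$ is an analytic isomorphism along $C$.

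Now assume in addition that $S$ is a domain, $K$ is torsion-free and $R$ is strongly twisted by $K$; put $A=S\cap\Ker D$ and $C=A\setminus\{0\}$. By Corollary~\ref{connection}(1) the extension $A\subseteq S$ is strongly analytic, hence has trivial generic fiber, so $S_C$ equals the quotient field $F$ of $S$ by Remark~\ref{old remark}; since $R_C=S_C$ (Theorem~\ref{pre-construction c}), $R_C=F$ is a field. Consequently every nonzero ideal of $R$ meets $C$: for $0\ne a\in R$, $a^{-1}\in R_C$ gives $a^{-1}=r/c$ with $c\in C$, so $c=ar\in aR\cap C$. Also $0\times K$ is a nilpotent ideal of $S\star K$, so $\Spec(S\star K)\to\Spec(S)$ is a homeomorphism; composing with the surjection $\Spec(S)\to\Spec(R)$ coming from integrality of $R\subseteq S$ (Theorem~\ref{pre-construction c}) shows $\Spec(f)$ is surjective, so it remains only to prove $f$ is flat. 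For this it suffices to show $\mathrm{Tor}^R_1(R/I,S\star K)=0$ for every finitely generated ideal $I$ of $R$; this is clear for $I=0$, so assume $I\ne 0$ and pick $c\in I\cap C$. Since $c$ is a nonzerodivisor on $R$ and on $S\star K$, and $R/I$ is an $R/cR$-module, the standard change-of-rings isomorphism gives
$$\mathrm{Tor}^R_1(S\star K,\,R/I)\ \cong\ \mathrm{Tor}^{R/cR}_1\big((S\star K)/c(S\star K),\,R/I\big).$$
By the previous part $f_c$ is a ring isomorphism $R/cR\to (S\star K)/c(S\star K)$, so $(S\star K)/c(S\star K)$ is a free, hence flat, $R/cR$-module, and the right-hand side vanishes. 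Therefore $S\star K$ is a flat $R$-module, and being flat with surjective spectral map, $f$ is faithfully flat.

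The step I expect to be the main obstacle is the surjectivity of $f_c$: producing an explicit anti-derivative $r\in R$ representing a prescribed class $(s,k)+c(S\star K)$ must use condition (c) and the surjectivity half of Lemma~\ref{ontoness} simultaneously, and the bookkeeping distinguishing $K$ from its localization $K_C$ is where care is needed (in the general twisted case one leans on Lemma~\ref{one case} and \cite[Proposition 3.5]{OlbAR} to organize this). For the faithful-flatness half the only essential new point is that $R_C$ is a field, which is precisely what makes every nonzero ideal of $R$ meet $C$ and thereby lets the already-established analytic isomorphism feed into the change-of-rings formula.
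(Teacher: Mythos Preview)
Your proof is correct. The first half (analytic isomorphism along $C$) is essentially the paper's argument made explicit: the paper simply invokes \cite[Proposition~3.5]{OlbAR} after checking its hypotheses, while you unpack the bijectivity of each $f_c$ directly, using condition~(c) and Lemma~\ref{ontoness} exactly as that cited result does.

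For faithful flatness, however, your route differs from the paper's. The paper argues flatness by observing that, since $f$ is an analytic isomorphism along $C=A\setminus\{0\}$, the cokernel of $f$ is $C$-torsion-free and $C$-divisible; because every nonzero ideal of $R$ meets $C$ (the fact you also prove), this makes $\mathrm{Coker}\,f$ a torsion-free divisible $R$-module, hence a vector space over the quotient field and therefore flat, so $S\star K$ is flat as an extension of flat $R$-modules. Faithfulness is then checked by showing $f(M)(S\star K)\ne S\star K$ for each maximal ideal $M$ of $R$, via Lemma~\ref{one case}. You instead verify the Tor criterion for flatness directly, using the change-of-rings isomorphism $\mathrm{Tor}^R_1(S\star K,R/I)\cong\mathrm{Tor}^{R/cR}_1((S\star K)/c(S\star K),R/I)$ (valid because $c$ is a nonzerodivisor on both $R$ and $S\star K$) and the already-established isomorphism $f_c$; faithfulness comes from surjectivity of $\Spec(f)$, obtained by composing the homeomorphism $\Spec(S\star K)\cong\Spec(S)$ with the surjection $\Spec(S)\to\Spec(R)$ coming from integrality. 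Both arguments hinge on the same key observation that every nonzero ideal of $R$ meets $C$; the paper's packaging via $\mathrm{Coker}\,f$ is slightly slicker, while yours is more self-contained and avoids appealing to the structure of torsion-free divisible modules.
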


\begin{proof}
Let $A = S \cap \Ker D$.  Then by Theorem~\ref{C connection}, $A \subseteq S$ is $C$-analytic.  It is shown in \cite[Proposition 3.5]{OlbAR} that this fact along with the following assumptions imply that $f$ is an analytic isomorphism along $C$:  
(a)  $K$ is a $C$-torsion-free $S$-module;  (b) $D:S_C \rightarrow K_C$ is an $A_C$-linear derivation; (c) 
 $D(S_C)$ generates $K_C$ as an $S_C$-module, and (d)   $R  = S \cap D^{-1}(K)$.  All of these conditions are satisfied since $R$ is twisted by $K$ along $C$, and $D$ is the derivation that twists it.  If also $S$ is a domain, $K$ is torsion-free and $R$ is strongly twisted by $K$, then $f$ is an analytic isomorphism along $C = A \setminus \{0\}$, so that Coker $f$ is a torsion-free divisible $R$-module, a fact which implies that $f$ is flat.  
If $M$ is  a maximal ideal of $R$, then since by Theorem~\ref{pre-construction c}, $R \subseteq S$ is integral, it follows that $M = MS \cap R$.  Moreover, since $A \subseteq S$ is strongly analytic (Corollary~\ref{connection}), and hence has TGF, the extension $A \subseteq R$ has TGF since $S/R$ is a torsion $R$-module.  If $R$ is a field, then clearly $f$ is faithful.  Otherwise, if $R$ is not a field, 
then 
the maximal ideals of $S$ meet $C = A \setminus \{0\}$, and hence by Lemma~\ref{one case}, $f(M)(S\star K) = MS \star K \ne S \star K$, so that $f$ is faithful.   
\end{proof}

\begin{cor} \label{pre-construction completion}  
If $R$ and $S$ are quasilocal, each  
 with finitely generated maximal ideal meeting $C$, then
$f: R \rightarrow S \star K$ lifts to  an isomorphism of rings, $\widehat{R}
\rightarrow \widehat{S} \star \widehat{K},$  where $\widehat{R}$ is the completion of $R$ in its ${\bf m}$-adic topology, while $\widehat{S}$ and  $\widehat{K}$ are the
completions of $K$ in the ${\bf m}$-adic topology of $S$.
\end{cor}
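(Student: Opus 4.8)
The plan is to use Theorem~\ref{pre-construction}, which says that $f\colon R\to S\star K$ is an analytic isomorphism along $C$, and to upgrade this to an isomorphism of ${\bf m}$-adic completions. Write ${\bf m}$ for the maximal ideal of $R$. Since ${\bf m}$ meets $C$, Theorem~\ref{prime correspond} shows ${\bf m}S$ is the maximal ideal of $S$, so ${\bf n}:={\bf m}S\star K$ is the maximal ideal of $S\star K$. The first step is the identity $f({\bf m})(S\star K)={\bf n}$: as ${\bf m}S$ is proper, ${\bf m}S\cap R$ is a proper ideal of $R$ containing ${\bf m}$, hence equals ${\bf m}$, so Lemma~\ref{one case} applies with $I={\bf m}$ and yields $f({\bf m})(S\star K)={\bf m}S\star K={\bf n}$. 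Taking powers of ideals, and using that $f$ is a ring homomorphism, gives ${\bf m}^i(S\star K)=f({\bf m})^i(S\star K)={\bf n}^i$ for all $i$; so the ${\bf m}$-adic topology on $R$ induces precisely the ${\bf n}$-adic topology on $S\star K$.

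Second, I would show $f$ induces an isomorphism $R/{\bf m}^i\to(S\star K)/{\bf n}^i$ for each $i$. Tensoring the exact sequence $0\to R\xrightarrow{f}S\star K\to\mathrm{Coker}\,f\to 0$ with $R/{\bf m}^i$ over $R$, the point is that the relevant $\mathrm{Tor}$ terms vanish. Indeed $\mathrm{Coker}\,f$ is $C$-torsion-free and $C$-divisible (this is the content of ``analytic isomorphism along $C$''), hence a module over $R_C$, while $R/{\bf m}^i$ is $C$-torsion, being killed by $c^i$ for any $c\in{\bf m}\cap C$. Thus a free resolution of $R/{\bf m}^i$ over $R$ localizes at $C$ to a bounded-below exact complex of free $R_C$-modules (it resolves $0$), which is split exact; tensoring it with the $R_C$-module $\mathrm{Coker}\,f$ preserves exactness, whence $\mathrm{Tor}^R_n(\mathrm{Coker}\,f,R/{\bf m}^i)=0$ for all $n$. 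The long exact sequence then degenerates to an isomorphism $R/{\bf m}^i\to(S\star K)\otimes_R R/{\bf m}^i=(S\star K)/{\bf m}^i(S\star K)=(S\star K)/{\bf n}^i$. These are compatible with the transition maps, so the inverse limit gives a ring isomorphism $\widehat R\to\varprojlim(S\star K)/{\bf n}^i$ lifting $f$.

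Third, I would identify $\varprojlim(S\star K)/{\bf n}^i$ with $\widehat S\star\widehat K$. A direct computation in the idealization gives ${\bf n}^i=({\bf m}S)^i\star({\bf m}S)^{i-1}K$, so $(S\star K)/{\bf n}^i=\bigl(S/({\bf m}S)^i\bigr)\star\bigl(K/({\bf m}S)^{i-1}K\bigr)$ as rings. Since an inverse limit commutes with the finite direct sum underlying the idealization construction and with the harmless shift of index in the $K$-factor, $\varprojlim(S\star K)/{\bf n}^i=\widehat S\star\widehat K$, where $\widehat S$ and $\widehat K$ are the completions of $S$ and $K$ in the ${\bf m}S$-adic (i.e.\ maximal-adic) topology of $S$. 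Combining the three steps proves $\widehat R\cong\widehat S\star\widehat K$. (Finite generation of the maximal ideals keeps everything within the framework of \cite{OlbAR}, where the passage from analytic isomorphisms along $C$ to completions is recorded, and one could quote that in place of the $\mathrm{Tor}$ computation above.)

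I expect the main obstacle to be the injectivity half of $R/{\bf m}^i\to(S\star K)/{\bf n}^i$, i.e.\ the equality $f(R)\cap{\bf n}^i=f({\bf m}^i)$. This does not follow from ${\bf m}$-adic formalities: for example $({\bf m}S)^i\cap R$ is in general strictly larger than ${\bf m}^i$, so one cannot simply contract ideals of $S$ to $R$. It is exactly here that the $C$-torsion-freeness of $\mathrm{Coker}\,f$ — not merely its $C$-divisibility — enters, through the vanishing of $\mathrm{Tor}^R_1(\mathrm{Coker}\,f,R/{\bf m}^i)$. A subsidiary point that must be handled with care is that ${\bf m}^i(S\star K)$ equals $({\bf m}S\star K)^i$ and not the naive ${\bf m}^iS\star K$, which is why Lemma~\ref{one case} is used only at $I={\bf m}$, with the powers taken afterwards.
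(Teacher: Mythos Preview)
Your proof is correct, but it proceeds along a genuinely different route from the paper's.

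The paper introduces the auxiliary ring $A = S \cap \Ker D$ and its maximal ideal ${\ff m} = M \cap A$, then shows that ${\ff m}R$ is $M$-primary (this is where finite generation of $M$ is used: some power of $M$ lies in ${\ff m}R$), so the $M$-adic and ${\ff m}$-adic topologies on $R$ coincide, and similarly for $S$ and $K$. Since $f$ is $A$-linear and $D$ vanishes on ${\ff m}$, one has ${\ff m}^i(S\star K) = {\ff m}^iS \star {\ff m}^iK$ directly, and the analytic isomorphism along $C$ (applied with $c \in {\ff m}\cap C$) gives $R/{\ff m}^iR \cong S/{\ff m}^iS \star K/{\ff m}^iK$ at each finite level, with no need to compute powers of ${\bf n}$ or invoke $\mathrm{Tor}$.

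Your approach bypasses $A$ entirely, working with $\mathbf{m}$ itself: Lemma~\ref{one case} identifies $f(\mathbf{m})(S\star K)$, the $\mathrm{Tor}$-vanishing (equivalently, $\mathrm{Coker}\,f$ is an $R_C$-module while $R/\mathbf{m}^i$ is $C$-torsion) gives the level-wise isomorphisms, and the explicit formula $\mathbf{n}^i = (\mathbf{m}S)^i \star (\mathbf{m}S)^{i-1}K$ handles the idealization side. This is more homological and, notably, does not seem to require finite generation of the maximal ideals at all; the paper's argument is more elementary but leans on the $A$-linearity of $f$ and the cofinality of the ${\ff m}$-adic filtration, which is exactly where the finite-generation hypothesis enters.
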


\begin{proof}  Let  $M$ and $N$ denote the maximal ideals of $R$ and $S$, respectively.
   By
Theorem~\ref{prime correspond},   $N = MS$. Let $A = S \cap \Ker D$, and let ${\ff m} =M \cap A$.  By Theorem~\ref{C connection}, $A \subseteq S$ is $C$-analytic.  Thus   by Proposition~\ref{basic cor}, ${\ff m}$ is a maximal ideal of $A$, so necessarily, ${\ff m}= MS \cap A$. We claim that ${\ff m}R$ is an $M$-primary ideal of
$R$.  Indeed, if $P$ is a prime ideal of $R$ such that ${\ff m}R
\subseteq P$, then $P$ meets $C$ since $M$ meets $C$ and ${\ff m} =
M\cap A$.  Thus ${\ff m}S \subseteq PS$, and  by
Proposition~\ref{basic cor}, ${\ff m}S$ is the maximal ideal of $S$,
so ${\ff m}S = PS$.  This shows that for every prime ideal $P$ of $R$ containing ${\ff m}R$, we have $PS = {\ff m}S$, and hence by Theorem~\ref{prime correspond}, $P = PS \cap R = {\ff m}S \cap R = M$.  Therefore, $M$ is the unique prime ideal of $R$ containing ${\ff m}R$, and
 hence
${\ff m}R$ is $M$-primary.  Now since $M$ is
finitely generated, some power of $M$ is contained in ${\ff m}R$,
and hence  $\widehat{R} \cong \lim_{\leftarrow} R/{\ff m}^iR$. Similarly, since ${\ff m}S$ is the
maximal ideal of $S$, we have $\widehat{S} \cong \lim_{\leftarrow}
S/{\ff m}^iS$ and $\widehat{K} \cong \lim_{\leftarrow} K/{\ff m}^iK$.
Now let $c \in {\ff m} \cap C$.  Then by Theorem~\ref{pre-construction} we have that for each $i$, the induced  mapping $R/c^iR \rightarrow S/c^iS \star K/c^iK$ is an isomorphism. It follows that the induced mapping $R/{\ff m}^i \rightarrow S/{\ff m}^iS \star K/{\ff m}^iK$ is an isomorphism. This in turn implies that $f$ lifts to an isomorphism $\widehat{R} \rightarrow \widehat{S} \star \widehat{K}$.   
\end{proof}

\begin{rem} \label{late remark} {\em Since the maximal ideal $N$ of $S$ is extended from that of $R$, the $N$-adic and $M$-adic topologies on $S$-modules agree. Hence $\widehat{S}$ and $\widehat{K}$ can also be viewed as the completions of $S$ and $K$ in the $M$-adic topology.}
\end{rem} 

\section{Noetherian rings}
 
In this section we characterize  when strongly twisted subrings are Noetherian, and consider also a special situation when being twisted
along a multiplicatively closed subset is enough to guarantee the subring is Noetherian.  

\begin{lem} \label{pre-construction 2}  Let $S$ be a ring,  let $C$ be a multiplicatively closed subset of nonzerodivisors of $S$, and let $K$ be a $C$-torsion-free $S$-module.   Suppose that 
 $R$ is a subring of $S$ that is twisted by $K$ along $C$, and let $I$ be an ideal of $R$ meeting $C$.
  If $IS \cap R$ \index{twisted subring!finitely generated ideal of}
is a finitely generated ideal of $R$, then $K/IK$ is a finitely
generated $S$-module. Conversely, if $I = IS \cap R$,  $IS$ is a
finitely generated ideal of $S$ and $K/cK$ is a finitely generated 
$S$-module for some $c \in I \cap C$, then $I$ is a finitely
generated ideal of $R$.
\end{lem}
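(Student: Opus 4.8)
The plan is to transport the statement to the idealization $S\star K$ by means of the analytic isomorphism $f\colon R\to S\star K$, $r\mapsto(r,D(r))$, of Theorem~\ref{pre-construction}, and then to exploit the square-zero ideal $0\star K:=\{(0,k):k\in K\}$ of $S\star K$. Identifying $R$ with its image $f(R)$, the extension $f(R)\subseteq S\star K$ is analytic along $f(C)$, a multiplicatively closed subset of nonzerodivisors of $S\star K$ (Theorem~\ref{pre-construction}), so Proposition~\ref{basic cor} and its ideal correspondence apply to this extension. I will also use that the projection $\pi\colon S\star K\to S$ induces $(S\star K)/(0\star K)\cong S$, and that $(H\star K)\cdot(0\star K)=0\star HK$ for every ideal $H$ of $S$.

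For the first assertion, put $J=IS\cap R$. Then $J$ is an ideal of $R$ meeting $C$ (it contains $I$), it satisfies $J=JS\cap R$ (since $JS\subseteq IS$ forces $JS\cap R\subseteq IS\cap R=J$), and $JK=IK$ (since $J\subseteq IS$ gives $JK\subseteq (IS)K=IK$, while $I\subseteq J$ gives the reverse inclusion). By hypothesis $J$ is finitely generated over $R$, hence the ideal $f(J)(S\star K)$ of $S\star K$ is finitely generated, and by Lemma~\ref{one case} applied to $J$ it equals $(JS)\star K$. Passing to the quotient by the action of $0\star K$, we have $((JS)\star K)\cdot(0\star K)=0\star(JS)K=0\star IK$, so $((JS)\star K)/(0\star IK)$ is a module over $(S\star K)/(0\star K)\cong S$, and as such it is $JS\oplus(K/IK)$ with the diagonal $S$-action. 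Being a quotient of a finitely generated ideal of $S\star K$, it is finitely generated over $S\star K$, hence over $S$; projecting onto the second summand shows $K/IK$ is a finitely generated $S$-module.

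For the converse, since $I=IS\cap R$ meets $C$, Lemma~\ref{one case} gives $f(I)(S\star K)=(IS)\star K$. As $cK\subseteq IK$, the $S$-module $K/IK$ is a homomorphic image of $K/cK$ and is therefore finitely generated; pick $k_1,\dots,k_q\in K$ whose residues generate it, and $b_1,\dots,b_p\in IS$ generating $IS$ over $S$. Then $\{(b_\ell,0)\}_\ell\cup\{(0,k_j)\}_j$ generates $(IS)\star K$ over $S\star K$: given $(x,k)$ with $x\in IS$ and $k\in K$, one subtracts a suitable $S\star K$-combination of the $(b_\ell,0)$ to reduce to $x=0$, and then uses $k\in IK+\textstyle\sum_j Sk_j=(IS)K+\sum_j Sk_j$. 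Thus $f(I)(S\star K)$ is a finitely generated ideal of $S\star K$ meeting $f(C)$; by Proposition~\ref{basic cor}(1) its contraction to $f(R)$ is $f(I)$, and by Proposition~\ref{basic cor}(2) that contraction is finitely generated over $f(R)$. Since $f$ restricts to an isomorphism $R\to f(R)$, it follows that $I$ is finitely generated over $R$.

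The step I expect to take the most care is the first assertion. One cannot extract finite generation of $K/IK$ directly from finite generation of $IS\cap R$, since the quotient $R/(IS\cap R)$ carries no information about $K$; the mechanism that makes it work is to pass to $S\star K$ and kill the square-zero ideal $0\star K$, which turns ``finitely generated ideal of the idealization'' into ``finitely generated ideal of $S$ together with a finitely generated $S$-module $K/IK$.'' One also has to keep careful track of the two closely related ideals $I$ and $J=IS\cap R$ and of the identities $JS\cap R=J$ and $JK=IK$, so that Lemma~\ref{one case} is only ever applied to an ideal equal to its own contraction from $S$ --- in the converse this is a hypothesis, and in the first assertion one passes to $J$ to arrange it.
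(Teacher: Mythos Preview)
Your proof is correct, and both directions work as you describe. The converse is close in spirit to the paper's argument: the paper passes to $R/cR\cong S/cS\star K/cK$ and identifies $I/cR$ with $IS/cS\star K/cK$, then observes this is a finitely generated $A$-module (hence $R$-module) because $S=A+cS$; you instead stay in the full idealization $S\star K$, show $(IS)\star K$ is finitely generated there, and pull back via Proposition~\ref{basic cor}(2). These amount to the same mechanism viewed either modulo $c$ or in the analytic extension.

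The first direction, however, is genuinely different. The paper argues directly with the derivation: it first proves that $K$ is generated as an $S$-module by $D(R)$, then expands $D(y)$ for $y=\sum r_ix_i\in IS\cap R$ by the Leibniz rule to see $D(IS\cap R)\subseteq \sum_i SD(x_i)+IK$, and finally uses $R=A+(IS\cap R)$ (from $C$-divisibility of $S/A$) to conclude $D(R)=D(IS\cap R)$, whence $K=\sum_i SD(x_i)+IK$. Your route instead transports the finitely generated ideal $J=IS\cap R$ into $S\star K$ via Lemma~\ref{one case}, then kills the square-zero ideal $0\star K$ to turn the resulting finitely generated $(S\star K)$-module into the $S$-module $JS\oplus K/IK$. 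Your argument is cleaner and more structural---it never unpacks the derivation---while the paper's computation has the mild advantage of exhibiting explicit $S$-generators $D(x_1),\dots,D(x_n)$ of $K/IK$.
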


\begin{proof}  Let $D$ be the derivation that twists $R$, and let $A = S \cap \Ker D$.  
  Suppose that $IS \cap R$ is a finitely generated ideal of $R$, and
  write $IS \cap R = (x_1,\ldots,x_n)R$.  Observe that since $K$ is an $S$-module,
   $IK \subseteq (IS \cap R)K \subseteq
  IK$, so that $(IS \cap R)K = IK$.
     We
claim that  $K = SD(x_1) + \cdots + SD(x_n) + IK$, and we prove this indirectly.
First we show  that  $K$ is generated as an $S$-module by $D(R)$.  For if $x \in K$, then since $K_C$ is generated by $D(S_C)$ as an $S_C$-module and $D$ is $C$-linear, there exist $s_i,\sigma_i \in S$ and $c \in C$ such that  $x = \sum_i \frac{s_i}{c}D(\sigma_i)$.  Since $S/A$ is $C$-divisible, there exist $a_i \in A$ and $\tau_i \in S$ such that $\sigma_i = a_i  + c\tau_i$.  Since $D(a_i) =0$, it follows that $x = \sum_i s_iD(\tau_i)$.  Now choose $e \in C$ such that for all $i$,  $eD(\tau_i) \in K$. For each $i$, write $s_i = b_i + et_i$ for some $b_i \in A$ and $t_i \in S$.  Then $x = \sum_i s_iD(\tau_i) = D(\sum_i b_i\tau_i) + \sum_i t_iD(e\tau_i)$.  Now $e\tau_i \in D^{-1}(K) \cap S = R$, and $x \in K$, so that $\sum_i b_i\tau_i \in D^{-1}(K) \cap S = R$, which proves that $K$ is generated as an $S$-module by $D(R)$. 

Now let $y \in IS \cap R$, and write $y = x_1r_1 + \cdots +
x_nr_n$ for $r_1,\ldots,r_n \in R$. Then
\begin{eqnarray*} D(y) & = & r_1D(x_1) + \cdots + r_nD(x_n) +
x_1D(r_1) + \cdots x_nD(r_n) \\ \: & \in & RD(x_1) + \cdots +
RD(x_n) + IK.\end{eqnarray*}    Therefore, $D(IS \cap R) \subseteq SD(x_1) +
\cdots + SD(x_n) + IK$. Now
 since $S/A$ is $C$-divisible,  then $S = A + IS$, and since $A \subseteq R$, we have then $R = A + (IS \cap R)$.  Thus
 $$D(R) =
D(A)+D(IS \cap R) = D(IS \cap R).$$ 
Since, as we have shown, $K$ is generated as an
$S$-module by $D(R)=D(IS \cap R)$, we conclude that $K = SD(x_1) + \cdots +
SD(x_n) + IK$. Therefore, $K/IK$ is a finitely generated $S$-module.

Conversely, suppose that $I = IS \cap R$,  $IS$ is a finitely generated ideal of $S$
and $K/cK$ is a finitely generated $S$-module for some $c \in I \cap
C$.  Then by Theorem~\ref{pre-construction}, the mapping $f$ induces an isomorphism $f_c:R/cR \cong S/cS \star K/cK$, and by 
Lemma~\ref{one case},  $$I/cR \cong f_c(I/cR)(S/cS \star K/cK) = IS/cS \star
K/cK.$$  Now $IS/cS$ and $K/cK$ are  finitely generated
$S$-modules.  But $S = A + cS$, so it follows that $IS/cS$ and $K/cK$ are finitely generated $A$-modules.
  Therefore, $I/cR$ is a finitely
generated $A$-module, and hence $I$ is a finitely generated ideal of
$R$.
\end{proof}

\begin{thm} \label{Twisted Noetherian subrings} Suppose that $S$ is a domain and $R$ is a subring of $S$ strongly twisted by a torsion-free $S$-module $K$. Let $D$ be the derivation that strongly twists $R$.  
The ring $R$ is a Noetherian domain if and only if $S$ is a Noetherian domain and for each $0 \ne a \in S \cap \Ker D$,  $K/aK$ is a finitely generated $S$-module.
\end{thm}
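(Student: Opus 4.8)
The plan is to prove both directions through Cohen's theorem, reducing Noetherianity to finite generation of prime ideals, and to transport finite generation of primes between $R$ and $S$ using the prime correspondence of Theorem~\ref{prime correspond} together with the analytic isomorphism $f\colon R\to S\star K$ of Theorem~\ref{pre-construction}. First I would fix notation: set $A=S\cap\Ker D$ and $C=A\setminus\{0\}$; then $R$ is twisted by $K$ along $C$ (immediate from the definitions), $R$ is a domain, and by Corollary~\ref{connection} the extension $A\subseteq S$ is strongly analytic with $\mathrm{Frac}(R)=F$. The one preliminary fact I would record is that \emph{every nonzero prime of $R$ meets $C$}: by Theorem~\ref{pre-construction c} (its proof) $R_C=S_C$, and since $A\subseteq S$ is strongly analytic, $F=\mathrm{Frac}(A)+S$ by Remark~\ref{old remark}, so $S_C$ contains both $S$ and $\mathrm{Frac}(A)$, giving $S_C=F$ and hence $R_C=F$; consequently, a nonzero prime $P$ of $R$ with $P\cap C=\emptyset$ would satisfy $F=R_C\subseteq R_P\subseteq F$, forcing $PR_P=0$ and so $P=0$, a contradiction.

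For ($\Leftarrow$), assume $S$ is Noetherian and $K/aK$ is a finitely generated $S$-module for every $0\ne a\in A$. By Cohen's theorem it suffices to show every prime of $R$ is finitely generated, and only nonzero primes need attention. Let $P$ be a nonzero prime of $R$; by the preliminary fact $P$ meets $C$, so Theorem~\ref{prime correspond} gives $P=PS\cap R$ with $PS$ a prime of the Noetherian ring $S$, hence finitely generated. Choosing $c\in P\cap C$, the $S$-module $K/cK$ is finitely generated by hypothesis, so the converse half of Lemma~\ref{pre-construction 2} applies and shows $P$ is finitely generated; therefore $R$ is Noetherian.

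For ($\Rightarrow$), assume $R$ is Noetherian. If $0\ne a\in A$ (so $a\in C$), Theorem~\ref{pre-construction} yields an isomorphism $R/aR\cong S/aS\star K/aK$; the left side is Noetherian, hence so is $S/aS\star K/aK$, and therefore its ideal $0\star K/aK$ is a finitely generated module over that ring. Since the action on $0\star K/aK$ factors through $S/aS$, this gives that $K/aK$ is finitely generated over $S/aS$, hence over $S$. To see that $S$ is Noetherian I would again use Cohen's theorem: given a nonzero prime $Q$ of $S$, integrality of $R\subseteq S$ (Theorem~\ref{pre-construction c}) together with $S$ being a domain makes $P:=Q\cap R$ a nonzero prime of $R$; by the preliminary fact $P$, and hence $Q\supseteq P$, meets $C$, so Theorem~\ref{prime correspond} gives $Q=PS$; as $R$ is Noetherian, $P$ is finitely generated, whence $Q=PS$ is a finitely generated ideal of $S$.

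I expect the crux to be the forward assertion that $S$ is Noetherian: because $R\subseteq S$ is integral but never module-finite (Theorem~\ref{pre-construction c}), Eakin--Nagata does not apply, and faithful flatness of $f$ does not by itself force $S\star K$, or $S$, to be Noetherian. The resolution is precisely the mechanism above --- Cohen's theorem plus the observation that every nonzero prime of $R$ meets $C$ --- which lets the prime correspondence of Theorem~\ref{prime correspond} carry finite generation of primes from $R$ to $S$ with no finiteness hypothesis on the extension. A lesser point still needing care in the ($\Leftarrow$) direction is verifying that the hypotheses of the converse half of Lemma~\ref{pre-construction 2} ($P=PS\cap R$, $PS$ finitely generated, and $K/cK$ finitely generated for a suitable $c\in P\cap C$) genuinely hold for each nonzero prime $P$ of $R$.
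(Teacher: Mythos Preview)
Your argument is correct and follows essentially the same route as the paper: both directions go through Cohen's theorem, the prime correspondence of Theorem~\ref{prime correspond}, and Lemma~\ref{pre-construction 2}. The only notable difference is that for the forward direction's claim that $K/aK$ is finitely generated, the paper invokes the first half of Lemma~\ref{pre-construction 2} (applied to the ideal $aS\cap R$), whereas you read it off directly from the isomorphism $R/aR\cong S/aS\star K/aK$; you are also more explicit than the paper about why every nonzero prime of $R$ meets $C$, a point the paper uses but does not spell out.
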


\begin{proof} Let $A = S \cap \Ker D$. If $R$ is a Noetherian domain, then since every prime ideal of
$S$ is extended from $R$ (Theorem~\ref{prime correspond}), every
prime ideal of $S$ is finitely generated, and hence $S$ is
Noetherian. Moreover, if $R$ is Noetherian, then for every $0 \ne a
\in A$, $aS \cap R$ is a finitely generated ideal of $R$, and so by
Lemma~\ref{pre-construction 2}, $K/aK$ is a finitely generated
$S$-module. Conversely, if $S$ is Noetherian and $K/aK$ is a
finitely generated $S$-module for every $0\ne a \in A$, then by
Lemma~\ref{pre-construction 2} every ideal $I$ of $R$ of the
form $I = IS \cap R$ is finitely generated. Since $R \subseteq S$ is an integral extension, every prime ideal of $R$ has this form, and hence
 every
prime ideal of $R$ is finitely generated, and $R$ is Noetherian.
\end{proof}

In the setting of  Theorem~\ref{Twisted Noetherian subrings},  the Noetherian rings between $R$ and $S$  correspond by Theorem~\ref{correspondence} to the $S$-submodules $L$ of $FK$ that contain $K$ for which $L/aL$ is a finitely generated $S$-module for all $0 \ne a \in S \cap \Ker D$.  Of course, when $R$ has dimension $1$, then every ring between $R$ and $S$ must be Noetherian, but in higher dimensions, the preceding observations make it easy to find non-Noetherian rings between $R$ and $S$ when $K$ is finitely generated.  By contrast, if $K$ is not finitely generated, we see below that it can happen that there are no non-Noetherian rings between $R$ and $S$ when $S$ has dimension $>1$.  But in the case where $K$ is finitely generated, non-Noetherian rings must occur:

\begin{prop} \label{fg non} Let $S$ be a Noetherian domain of Krull dimension $>1$, and suppose $R$ is a subring of $S$ strongly twisted by a finitely generated torsion-free $S$-module $K$.  Then there exists a non-Noetherian ring between $R$ and $S$. \end{prop}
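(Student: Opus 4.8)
First I would reduce the statement to a question about finite generation of a single module. Let $D$ be the derivation that strongly twists $R$ by $K$, and set $A=S\cap\Ker D$. By the discussion immediately following Theorem~\ref{Twisted Noetherian subrings}, since $S$ is Noetherian the ring between $R$ and $S$ that corresponds under Theorem~\ref{correspondence} to an $S$-submodule $L$ with $K\subseteq L\subseteq FK$ is Noetherian if and only if $L/aL$ is a finitely generated $S$-module for every nonzero $a\in A$. So it suffices to produce one such $L$ and one nonzero $a\in A$ for which $L/aL$ is not finitely generated over $S$; the ring $T=S\cap D^{-1}(L)$ corresponding to $L$ will then be a non-Noetherian ring with $R\subseteq T\subseteq S$. (Here I am using that $K\ne 0$, which is implicit in the hypothesis, since otherwise $R=S$ and the statement is vacuous; and that by Theorem~\ref{Twisted Noetherian subrings} the ring $R$ is itself Noetherian, so the ring $T$ produced will in fact lie strictly between $R$ and $S$.)

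Next I would choose the data. Because $\dim S>1$, there is a chain $0\subsetneq\mathfrak p\subsetneq\mathfrak q$ of prime ideals of $S$, so $\mathfrak p$ is a nonzero prime ideal of $S$ that is not maximal. By Corollary~\ref{connection} the extension $A\subseteq S$ is strongly analytic and hence has trivial generic fiber, so $\mathfrak p\cap A\ne 0$; fix a nonzero $a\in\mathfrak p\cap A$, and note that $a\in\Ker D$ and that $a$ is a nonunit of $S$. I would then take $L=K_{\mathfrak p}$, the localization $(S\setminus\mathfrak p)^{-1}K$, regarded as an $S$-submodule of $FK$ via the natural maps $K\subseteq K_{\mathfrak p}\subseteq FK$ (these are injective because $K$ is torsion-free, and $K_{\mathfrak p}\subseteq FK$ because $S\setminus\mathfrak p\subseteq S\setminus\{0\}$). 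This $L$ lies between $K$ and $FK$, so Theorem~\ref{correspondence} produces a ring $T=S\cap D^{-1}(K_{\mathfrak p})$ with $R\subseteq T\subseteq S$.

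The heart of the proof is to show that $K_{\mathfrak p}/aK_{\mathfrak p}$ is not a finitely generated $S$-module. Since $a\in\mathfrak p$, we have $aK_{\mathfrak p}\subseteq\mathfrak p K_{\mathfrak p}$, so there is a surjection of $S$-modules $K_{\mathfrak p}/aK_{\mathfrak p}\twoheadrightarrow K_{\mathfrak p}/\mathfrak p K_{\mathfrak p}\cong K\otimes_S\kappa(\mathfrak p)$, where $\kappa(\mathfrak p)=S_{\mathfrak p}/\mathfrak p S_{\mathfrak p}$ is the quotient field of $S/\mathfrak p$. Because $K$ is a nonzero finitely generated $S$-module, $K_{\mathfrak p}$ is a nonzero finitely generated $S_{\mathfrak p}$-module, so Nakayama's lemma gives $K\otimes_S\kappa(\mathfrak p)\ne 0$; mapping this onto a one-dimensional quotient yields a surjection of $S$-modules $K_{\mathfrak p}/aK_{\mathfrak p}\twoheadrightarrow\kappa(\mathfrak p)$ on which $\mathfrak p$ acts as $0$. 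If $K_{\mathfrak p}/aK_{\mathfrak p}$ were finitely generated over the Noetherian ring $S$, then $\kappa(\mathfrak p)$ would be a finitely generated module over $S/\mathfrak p$; but the quotient field of a domain is a finitely generated module over that domain only when the domain is a field (clear denominators), and $S/\mathfrak p$ is not a field because $\mathfrak p$ is not maximal. This contradiction shows $K_{\mathfrak p}/aK_{\mathfrak p}$ is not finitely generated over $S$, so by the reduction in the first paragraph the ring $T$ is not Noetherian.

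The step I expect to be the main obstacle is this finite-generation claim. The key observation that unlocks it is that $a\in\mathfrak p$ forces a surjection of $S$-modules from $K_{\mathfrak p}/aK_{\mathfrak p}$ onto the residue field $\kappa(\mathfrak p)$; after that, one needs only the elementary fact that the fraction field of a domain that is not a field is never finitely generated as a module over that domain. Everything else—the reduction via Theorem~\ref{Twisted Noetherian subrings} and Theorem~\ref{correspondence}, the existence of a nonzero non-maximal prime from $\dim S>1$, and the nonvanishing $\mathfrak p\cap A\ne 0$ from the trivial generic fiber of $A\subseteq S$—is routine.
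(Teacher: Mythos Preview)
Your proof is correct and follows essentially the same approach as the paper: pick a nonzero non-maximal prime $\mathfrak p$, set $T=S\cap D^{-1}(K_{\mathfrak p})$, and show $K_{\mathfrak p}/aK_{\mathfrak p}$ is not finitely generated over $S$ by surjecting onto $\kappa(\mathfrak p)$. You are in fact more careful than the paper on two points: you explicitly choose $\mathfrak p$ (the paper's proof never says what $P$ is), and you explicitly arrange $a\in\mathfrak p\cap A$ via trivial generic fiber, which is what is needed to invoke the Noetherian criterion.
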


  \begin{proof}   
Define $T = S \cap D^{-1}(K_P)$.  Then $T$ is a ring with $R \subseteq T \subseteq S$ and   $T$ is strongly twisted by $K_P$.
Suppose by way of contradiction that  $T$ is a Noetherian ring. Let $0 \ne c \in P$.  Then by Theorem~\ref{Twisted Noetherian subrings}, 
 $K_P/cK_P$ is a finitely generated $S$-module.  Moreover, since $K$ is a finitely generated $S$-module and $c \in P$, Nakayama's Lemma implies that $K_P/cK_P$ is a nonzero $S$-module.    Let $E$ be a nonzero cyclic $S_P$-submodule of $K_P/cK_P$.  Then since $S$ is a Noetherian ring and $K_P/cK_P$ is a finitely generated $S$-module, $E$ is also a finitely generated $S$-module.   Now $E \cong S_P/(0:_{S_P}E)$, and since $E \ne 0$, then $S_P/PS_P$ is a homomorphic image of $E$, and hence $S_P/PS_P$ must also be a finitely generated $S$-module.  But $S_P/PS_P$ is isomorphic to the quotient field of the domain $S/P$, so the finite generation of $S_P/PS_P$ forces $S_P = S$, which in turn implies that $P$ is a maximal ideal of $S$, a contradiction.  Therefore, $T$ is a non-Noetherian ring between $R$ and $S$.  \end{proof}

  The assumption that $K/aK$ is a finitely generated $S$-module for all $0 \ne a \in S \cap \Ker D$ is  weaker than simply requiring $K$ itself to be finitely generated.  This
subtlety leaves room for an interesting class of examples where although $K$ is not finitely generated, it produces Noetherian subrings $R$ of $S$.  We illustrate this in Example~\ref{new V example}, which uses the following observation.

\begin{lem} \label{tffr lemma} Let $V$ be a DVR with maximal ideal ${\ff M}$, let  $K$ be a torsion-free finite rank
$V$-module and let $r = \dim_{V/{\ff M}}K/{\ff M} K.$  Then $r \leq \rank(K)$ and
 for all proper nonzero ideals $J$ of $V$, $K/JK$ is a free $V/J$-module of rank $r$.   \end{lem}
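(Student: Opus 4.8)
The plan is to reduce everything to the structure theory of finitely generated modules over the principal ideal domain $V$, using the fact that a torsion-free finite rank $V$-module need not be finitely generated a priori, so the first task is to show it behaves well enough. First I would observe that, since $V$ is a DVR and $K$ is torsion-free of finite rank $n = \rank(K)$, the module $K$ sits inside its divisible hull $FK \cong F^n$, where $F$ is the quotient field of $V$; but more is true: a torsion-free module of finite rank over a DVR that is moreover required here to have $K/{\ff M}K$ finite-dimensional is in fact free of finite rank. I would justify this by picking elements $x_1,\dots,x_r \in K$ whose images form a $V/{\ff M}$-basis of $K/{\ff M}K$; by Nakayama-type reasoning (valid once we know $K$ is $\ff M$-adically separated, which holds because $K$ embeds in the $F$-vector space $FK$ and $\bigcap_i {\ff M}^i F^n = 0$) these generate $K$, so $K$ is finitely generated, hence free of some rank $m$ over the PID $V$. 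Then $r = \dim_{V/{\ff M}} K/{\ff M}K = m$ and also $m = \rank(K) = n$, giving $r = \rank(K)$ (in particular the inequality $r \le \rank(K)$, with equality).

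Once $K \cong V^r$ is established, the remaining statement is immediate: for any proper nonzero ideal $J = {\ff M}^k$ of $V$ (every nonzero ideal of a DVR is a power of the maximal ideal), we have $K/JK \cong (V/J)^r$, which is a free $V/J$-module of rank $r$. So the key steps, in order, are: (1) show $K$ is $\ff M$-adically separated by embedding it in $FK$; (2) use a lift of a $V/{\ff M}$-basis of $K/{\ff M}K$ together with Nakayama to conclude $K$ is finitely generated; (3) invoke the structure theorem over the PID $V$ to get $K$ free, and match up the three numbers $r$, $m$, $\rank(K)$; (4) base-change along $V \to V/J$ to finish.

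The main obstacle is step (2): one must be careful that Nakayama's lemma is being applied legitimately, since $K$ is not assumed finitely generated. The cleanest route is to let $N$ be the submodule of $K$ generated by the chosen lifts $x_1,\dots,x_r$; then $K = N + {\ff M}K$, so $K/N = {\ff M}(K/N)$, and since $K/N$ is a submodule of the finite-rank torsion-free-modulo-torsion quotient... — actually the slick argument is that $K/N$ is a torsion-free $V$-module (it need not be, a priori) — so instead I would argue directly: $K/N = {\ff M}(K/N)$ iterated gives $K/N = {\ff M}^i(K/N)$ for all $i$, and then use that $K \hookrightarrow FK \cong F^r$ forces $\bigcap_i {\ff M}^i K = 0$, hence $\bigcap_i {\ff M}^i(K/N)$ maps to $0$; combined with $K/N$ being a quotient of $K$ one concludes $K/N = 0$. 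This separatedness-plus-Nakayama argument is the only place requiring genuine care; everything afterward is routine PID module theory and base change.
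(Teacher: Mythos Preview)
There is a genuine gap in your argument, and it is precisely at the step you flagged as ``the only place requiring genuine care.'' Your claim that $K$ is ${\ff M}$-adically separated because ``$K$ embeds in the $F$-vector space $FK$ and $\bigcap_i {\ff M}^i F^n = 0$'' is false: since $F$ is a divisible $V$-module, ${\ff M}F = F$, so ${\ff M}^i F^n = F^n$ for every $i$ and the intersection is all of $F^n$, not $0$. Consequently $K$ need not be separated, Nakayama does not apply, and $K$ need not be finitely generated or free. A concrete counterexample is $K = F$ itself: this is torsion-free of rank $1$, yet ${\ff M}K = K$, so $r = 0 < 1 = \rank(K)$. More generally $K = V \oplus F$ has rank $2$ and $r = 1$. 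Your conclusion that $r = \rank(K)$ with equality is therefore wrong in general, which is exactly why the lemma only asserts $r \le \rank(K)$.

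The paper's proof handles this by isolating the divisible part of $K$. One considers the family ${\cal F}$ of submodules $H \subseteq K$ with $K/H$ nonzero torsion-free divisible. If ${\cal F}$ is empty, a projection argument shows $K$ embeds in a finitely generated free module and is hence free (this is essentially your intended argument, but with the correct hypothesis in place). If ${\cal F}$ is nonempty, one picks $H \in {\cal F}$ with $K/H$ of maximal rank; then for any $0 \ne v \in {\ff M}$ one has $vK \cap H = vH$ (torsion-freeness of $K/H$) and $K = H + vK$ (divisibility of $K/H$), giving $K/vK \cong H/vH$. Maximality of the rank of $K/H$ forces $H$ to have no divisible quotient, so $H$ is free of rank $r$, and the conclusion follows. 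The essential idea you are missing is that one must first strip off the divisible summand before any finiteness argument can go through.
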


\begin{proof}
Let ${\cal F}$ be the set of all $V$-submodules $H$ of $K$ such that
$K/H$ is a nonzero torsion-free divisible $V$-module.  Suppose first
that  ${\cal F}$ is empty.  Since $K$ is torsion-free, we may
view $K$ as contained in an $F$-vector space $L$ of the same rank,
say $n$, as $K$.  Write $L = Fe_1 \oplus \cdots \oplus Fe_n$, where
$e_1,\ldots, e_n$ is a basis for $L$.  Then since $\cal F$ is empty,
the image of the projection map $\pi_i:K \rightarrow F$ of $K$ onto
the $i$-th coordinate is not all of $F$.  Thus since $V$ is a DVR,
there exists $t \in V$ such that $\pi_i(K) \subseteq t^{-1}V$ for
all $i=1,\ldots,n$.  Therefore, $K \subseteq t^{-1}Ve_1 \oplus
\cdots \oplus t^{-1}Ve_n$, and hence $K$ is a submodule of a
finitely generated free $V$-module.  Since $V$ is a DVR, $K$ is a
 free $V$-module of rank $r$, and hence in the case where ${\cal F}$ is empty, the lemma is proved.

Next  suppose that ${\cal F}$ is nonempty, and  let $m$ be the
maximum of the ranks of the torsion-free  divisible $R$-modules $K/H$, where $H$ ranges over the members of ${\cal F}$.
 Choose $H \in
{\cal F}$ such that $K/H$ has rank $m$.   Let $J$ be a proper nonzero ideal of $V$, and write $J = vV$ for some $v \in V$.
We claim
that $K/vK \cong H/vH$, and that $H$ is a free
$V$-module (necessarily of rank no more than the rank of $K$). Now, since $K/H$ is torsion-free, we have $vK \cap H =
vH$, and hence there is an embedding $H/vH \rightarrow K/vK$ defined
by $h + vH \mapsto h + vK$ for all $h \in K$. Moreover, since $K/H$
is a divisible $V$-module, $K = H + vK$, and hence the mapping is an
isomorphism.
If there does not exist a
$V$-submodule $G$ of $H$ such that $H/G$ is a nonzero torsion-free
divisible $V$-module, then, as above, $H$ is a free $V$-module of rank $\leq  n$. In this case, since for any $0 \ne v \in {\ff M}$, we have shown that $H/vH \cong K/vK$, it follows that $H/{\ff M}H \cong K/{\ff M}K$, so that since $H$ is a free $V$-module, $\rank(H) = r$.

Thus the only case that remains to rule out is where
 there exists a $V$-submodule $G$ of $H$ such that $H/G$ is a nonzero torsion-free divisible $V$-module.
 Assuming the existence of such a $V$-submodule $G$ of $H$, there is an exact sequence of
$V$-modules:
$$0 \rightarrow H/G \rightarrow K/G \rightarrow K/H \rightarrow 0.$$
Since $H/G$ is a divisible torsion-free $V$-module, this sequence
splits, and hence $K/G \cong H/G \oplus K/H$.  Since $H/G$ is a nonzero divisible torsion-free module and $K/H$ is  a
 divisible torsion-free $V$-module of rank $m$, this means that $K/G$
is a divisible torsion-free $V$-module of rank $>m$, contradicting
the choice of $H$, and the lemma is proved.
\end{proof}

\begin{exmp} \label{new V example} {\em Let $k$ be a field of positive characteristic  that is separably generated and of 
infinite transcendence degree over a countable subfield, let $X_1,\ldots,X_d$ be indeterminates for $k$,  and let $S = k[X_1,\ldots,X_d]_{(X_1,\ldots,X_d)}$.  Matsumura has shown that the generic formal fiber of a local domain essentially of finite type over a field has dimension one less than the domain \cite[Theorem 1]{Mat2}, and Heinzer, Rotthaus and Sally have shown that this condition on the generic formal fiber guarantees the existence of a birationally dominating DVR    having residue field finite over the residue field of the base ring \cite[Corollary 2.4]{HRS}.   Thus $S$ is birationally dominated by a DVR $V$ having residue field finite over the residue field $k$  of $S$.   
  Let $K$ be a nonzero torsion-free finite rank $V$-module that is not divisible.  By Theorem~\ref{existence strongly twisted cor}, there exists a subring $R$ of $S$ strongly twisted by $K$.  
Also, with $N$ the maximal ideal of $S$, the fact that $V$ is a DVR implies  that  $K/NK$ is a finitely generated     $V$-module (Lemma~\ref{tffr lemma}).  If $a \in S \cap \Ker D$, where $D$ is the derivation that strongly twists $R$, then since $V$ is a DVR dominating $S$ and $K$ is a $V$-module, $a^iK = N^jK$ for some $i,j\geq 0$.  Since $V$ is a DVR and the residue field of $V$ is a finitely generated $S$-module, it follows that $V/xV$ is a finitely generated $S$-module for all $0 \ne x \in V$. Thus since $K/NK$ is a finitely generated $V/NV$-module and $V/NV$ is a finitely generated $S$-module, then $K/NK$ is a  finitely generated $S$-module.  Now since $N$ is a finitely  generated ideal of $S$, it follows that $K/N^jK = K/a^iK$ is a finitely generated $S$-module.  Therefore, $K/aK$ is a finitely generated $S$-module, and by Theorem~\ref{Twisted Noetherian subrings}, $R$ is a Noetherian ring.  Moreover, if the dimension of $S$ is more than $1$ and $K=V$, then $K$ is not a finitely generated $S$-module.  Twisted subrings arising in this manner are considered later in this section and the next, as well as in \cite{OlbSub}.}
\end{exmp}

The  next propositions contrasts the sort of twisted subrings occurring in the example  
with  those  in Proposition~\ref{fg non}.

\begin{prop} Let $S$ be a local Noetherian domain with maximal ideal $N$.  Suppose  that $S$ is birationally dominated by a DVR $V$ such that $V = S + NV$, and  that  there exists a subring $R$ of $S$ that is  strongly twisted by a torsion-free finite rank $V$-module $K$. 
  Then        every ring between $R$ and $S$ is a local 
 Noetherian ring that is strongly twisted by some $V$-module $L$ with $K \subseteq L \subseteq FK$.  
\end{prop}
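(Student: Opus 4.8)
The plan is to describe all rings between $R$ and $S$ by means of the correspondence of Theorem~\ref{correspondence}, to observe that the hypothesis $V = S + NV$ forces every module appearing there to be a $V$-module, and then to read off the local and Noetherian properties from Theorems~\ref{pre-construction c}, \ref{prime correspond} and \ref{Twisted Noetherian subrings}.

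First I would fix the derivation $D$ that strongly twists $R$, set $A = S \cap \Ker D$ and $C = A \setminus \{0\}$, so that $R$ is twisted by $K$ along $C$, and note that since $A \subseteq S$ is strongly analytic (Corollary~\ref{connection}) we have $S_C = F$ and $K_C = FK$. By Theorem~\ref{correspondence} the rings $T$ with $R \subseteq T \subseteq S$ are precisely the rings $T = S \cap D^{-1}(L)$, where $L$ ranges over the $S$-submodules of $FK$ with $K \subseteq L \subseteq FK$. Each such $L$ is torsion-free (a submodule of an $F$-vector space) and has divisible hull $FL = FK$, and conditions (a)--(c) defining ``strongly twisted'' hold for $T$ and $L$ with the same $D$: (a) is the defining equality $T = S \cap D^{-1}(L)$, (b) holds because $D(F)$ already spans $FK = FL$ over $F$, and (c) $S \subseteq \Ker D + sS$ for $0 \ne s \in S$ is a property of $D$ and $S$ alone. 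Thus $D$ strongly twists $T$ by $L$. Note also that $S$ is not a field, since a DVR with fraction field $F$ cannot contain $F$; hence $N \ne 0$.

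The hard part will be showing that every such $L$ is in fact a $V$-module; once this is done the rest is routine. Since $V$ is a DVR and $NV$ is a nonzero proper ideal of $V$, we have $NV = {\ff M}^\mu$ for the maximal ideal ${\ff M}$ of $V$ and some integer $\mu \ge 1$; iterating $V = S + NV$ and using $N^j \subseteq S$ gives $V = S + N^jV = S + {\ff M}^{j\mu}$ for all $j \ge 1$. Now take $v \in V$ and $\ell \in L$. Because $FK$ is the divisible hull of the torsion-free $S$-module $K$, I can write $\ell = k_0/s_0$ with $k_0 \in K$ and $0 \ne s_0 \in S$. Since $s_0 V$ is a nonzero ideal of the DVR $V$ it contains ${\ff M}^m$ for some $m$; choose $j$ with $j\mu \ge m$, so that ${\ff M}^{j\mu} \subseteq s_0 V$, and write $v = s + w$ with $s \in S$ and $w \in {\ff M}^{j\mu}$. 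Then
$$v\ell = s\ell + \frac{w}{s_0}\,k_0,$$
where $s\ell \in L$ since $L$ is an $S$-module, and $w/s_0 \in V$ by the choice of $j$, so $(w/s_0)k_0 \in VK = K \subseteq L$. Hence $v\ell \in L$, and $L$ is a $V$-module.

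Finally I would deduce locality and Noetherianness. By Theorem~\ref{pre-construction c} the extension $R \subseteq S$ is integral, hence $S$ is integral over $T$; since $S$ is local, every maximal ideal of $T$ is the contraction of a (necessarily maximal) ideal of $S$, so $T$ has the single maximal ideal $N \cap T$ and is local (alternatively, invoke the bijection $\Spec(S) \to \Spec(T)$ of Theorem~\ref{prime correspond}, valid because $T$ is strongly twisted by $L$ and $S$ is a domain). For the Noetherian property I would apply Theorem~\ref{Twisted Noetherian subrings} to $T$: it suffices that $L/aL$ be a finitely generated $S$-module for every $0 \ne a \in A$. Such an $a$ lies in $V$; if $a$ is a unit of $V$ then $L/aL = 0$, and otherwise $aV = {\ff M}^{v}$ for some $v \ge 1$ and $aL = (aV)L$ since $L$ is a $V$-module. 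As $L$ is a torsion-free $V$-module of finite rank (it sits between $K$ and $FK$), Lemma~\ref{tffr lemma} shows $L/aL$ is free of finite rank over $V/aV$; and $V/aV = V/{\ff M}^{v}$ has a finite filtration with successive quotients ${\ff M}^i/{\ff M}^{i+1} \cong V/{\ff M}$, which equals $S/N$ because $V = S + NV \subseteq S + {\ff M}$. Hence $V/aV$, and therefore $L/aL$, is a finitely generated $S$-module, and Theorem~\ref{Twisted Noetherian subrings} shows $T$ is Noetherian. Taking $L = K$ recovers $T = R$, so in particular $R$ itself is local and Noetherian, completing the proof.
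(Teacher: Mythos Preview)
Your proof is correct and follows the same plan as the paper's: use Theorem~\ref{correspondence} to write $T = S \cap D^{-1}(L)$ for an $S$-module $L$ with $K \subseteq L \subseteq FK$, show $L$ is in fact a $V$-module, and then combine Lemma~\ref{tffr lemma} with the finiteness of $V/aV$ over $S$ (forced by $V = S + NV$) to invoke Theorem~\ref{Twisted Noetherian subrings}. Your verification that $L$ is a $V$-module is a bit more direct than the paper's---you write $\ell = k_0/s_0$ with $k_0 \in K$ and absorb the remainder $(w/s_0)k_0$ into the $V$-module $K \subseteq L$, whereas the paper introduces an auxiliary free $V$-submodule $E \subseteq L$ of full rank, passes to coordinates in $Fe_1 \oplus \cdots \oplus Fe_n$, and shows $vy \in E + Sy$.
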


\begin{proof} Let $T$ be a ring such that $R \subseteq T \subsetneq S$, and let $F$ denote the quotient field of $S$.  Then by Theorem~\ref{correspondence},  there exists an $S$-module $L$ such that $K \subseteq L \subseteq FK$ and $T = S \cap D^{-1}(L)$.  We claim that $L$ is in fact a $V$-module (not just an $S$-module).  Let $E$ be a free $V$-submodule of $L$ having the same rank as $L$, and let $e_1,\ldots,e_n$ be a basis for $E$, so that $E = Ve_1 \oplus \cdots \oplus Ve_n$.  Then since $E$ and $L$
have the same rank, we may view $L$ as an $S$-submodule of $Fe_1
\oplus \cdots \oplus Fe_n$.  To show that $L$ is a $V$-module, it
suffices to show that for each $v \in V$ and $y \in L$, $vy \in E +
Sy$.  Let $v \in V$ and $y \in L$, and write $y = x_1e_1 + \cdots +
x_ne_n$, where $x_1,\ldots,x_n \in F$. Let $I = (V:_F x_1) \cap \cdots \cap (V:_F x_n)$.
Since $V$ is an overring of $S$, there exists $0 \ne t \in I \cap N$.  Moreover, since $V = S + NV$, it follows that $V = S + N^jV$ for all $j>0$.  Hence, since $V$ is a DVR and $0\ne t \in N$, it must be that
$V = S +
tV$.  Thus there exists $s \in S$ such that $v
-s \in tV \subseteq I$.  Consequently, $vx_i - sx_i \in V$ for all
$i=1,2,\ldots,n$. Returning now to the claim that $vy \in E + Sy$,
observe that $vy - sy = (vx_1-sx_1)e_1 + \cdots +(vx_n-sx_n)e_n \in
E$, so the claim is proved.  Therefore, $L$ is a $V$-module. By Lemma~\ref{tffr lemma}, $L/sL$ is a finitely generated $V$-module for all $0 \ne s \in S$. Since $V= S + NV$, then $V/NV$ is a cyclic $S$-module. This, along with the fact that $V$ is a DVR, implies that $V/sV$ is a finitely generated $S$-module for all $0 \ne s \in S$. Therefore, $L/sL$ is a finitely generated $S$-module for all $0 \ne s \in S$, and hence 
  by Theorem~\ref{Twisted Noetherian subrings}, $T$ is a local Noetherian domain. 
\end{proof}

The characterization of when strongly twisted subrings are Noetherian in Theorem~\ref{Twisted Noetherian subrings} depends on the subring being strongly twisted rather than twisted along $C$.  However, there is a specific circumstance when being twisted along a multiplicatively closed subset suffices to give Noetherianness.  The idea behind the following theorem originates with Ferrand and Raynaud \cite[Proposition 3.3]{FR} and the version we give here is a  generalization of a result of  Goodearl and Lenagan \cite[Proposition 7]{GL}.  Our formulation and approach to the theorem are different, but ultimately, as we point out in the proof, a key step depends on an argument from \cite{FR}.    Also, unlike  the strongly twisted Noetherian rings produced using Theorems~\ref{existence strongly twisted cor} and~\ref{Twisted Noetherian subrings}, the next theorem produces examples in arbitrary characteristic, as Corollary~\ref{existence V cor} illustrates.    



\begin{thm} \label{existence V} Let $(S,N)$ be a two-dimensional local Noetherian UFD  that  is birationally   dominated by a DVR $V$ having the same residue field as $S$, and such that there is $t \in N$ with  $tV$ the maximal ideal of $V$.  If  $R$ is a subring of $S$ that is twisted along $C =\{t^i:i>0\}$ by    an $S$-submodule $K$ of a  finitely generated free $\widehat{V}$-module $K'$ with $K'/K$ a $C$-torsion-free $S$-module, then $R$ is a two-dimensional local Noetherian ring such that for every height $1$ prime ideal $P$ of $R$, $R_P = S_Q$ for some height $1$ prime ideal $Q$ of $S$. 
\end{thm}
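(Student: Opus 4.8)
The plan is to prove, in order: that $R$ is a local domain of Krull dimension $2$; that each $K/t^nK$ is a finite-length $S$-module; that $R$ is Noetherian; and that $R_P$ has the asserted form at every height-$1$ prime $P$. The module $K/t^nK$ is the thread running through everything. For the structural facts: by Theorem~\ref{pre-construction c} the extension $R\subseteq S$ is integral with the same total quotient ring, so $R$ is a domain with quotient field $F$ and $\dim R=\dim S=2$; since $S$ is local and $R\subseteq S$ is integral, lying-over together with ``a domain integral over a field is a field'' forces $R$ to be local with maximal ideal $M=N\cap R$, which meets $C$ because $t\in M$. Next, since $K'/K$ is $C$-torsion-free one gets $t^nK'\cap K=t^nK$, so $K/t^nK$ embeds in $K'/t^nK'$; writing $K'$ as free of rank $r$ over $\widehat V$ and using that $tV$ is the maximal ideal of the DVR $V$ with residue field that of $S$, the module $K'/t^nK'\cong(\widehat V/t^n\widehat V)^r$ has a finite filtration with all factors isomorphic to $S/N$. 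Hence $K'/t^nK'$, and with it its submodule $K/t^nK$, has finite length over $S$; in particular $K/t^nK$ is a finitely generated $S$-module, supported only at $N$.

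For Noetherianness I would combine this with the idealization picture. By Theorem~\ref{pre-construction}, $f\colon R\to S\star K$ is an analytic isomorphism along $C$, so it induces isomorphisms $R/t^nR\cong(S/t^nS)\star(K/t^nK)$ for every $n$; since $S/t^nS$ is Noetherian and $K/t^nK$ is finitely generated, each $R/t^nR$ is a Noetherian ring. Moreover $R[1/t]=R_C=S_C=S[1/t]$ is Noetherian and $\bigcap_n t^nR\subseteq\bigcap_n t^nS=0$. Passing to the $t$-adic completion (hats below denote $t$-adic completions), $\widehat R=\varprojlim R/t^nR\cong\widehat S\star\widehat K$; this is Noetherian, because $\widehat S$ is Noetherian and $\widehat K$ embeds in $\widehat{K'}=K'$, which is finitely generated over $\widehat S$: from $V=S+NV=S+tV$ (using $NV=tV$) a completeness argument shows $\widehat S$ surjects onto $\widehat V$, so $K'$, being finitely generated over $\widehat V$, is finitely generated over $\widehat S$. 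With $R$ now $t$-adically separated and $R/t^nR$, $R[1/t]$, $\widehat R$ all Noetherian, the method of Ferrand and Raynaud \cite[Proposition 3.3]{FR} applies: one shows $R\to\widehat R\times R[1/t]$ is faithfully flat and then descends the ascending chain condition. I expect this last gluing---concretely, the faithful flatness of $R\to\widehat R$---to be the main obstacle, and it is precisely the point at which \cite{FR} is invoked.

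Finally, let $P$ be a height-$1$ prime of $R$. If $t\notin P$, then $t$ is a unit in $R_P$, so localizing $R[1/t]=S[1/t]$ gives $R_P=S_{\mathfrak q}$ for the prime $\mathfrak q$ of $S$ contracting to $P$, and the height of $\mathfrak q$ equals $\dim S_{\mathfrak q}=\dim R_P=1$. If $t\in P$, then $P$ meets $C$, so by Theorem~\ref{prime correspond} $Q:=PS$ is a prime of $S$ with $Q\cap R=P$ and $S/Q\cong R/P$; since $Q\ne N$ (otherwise $P=M$ has height $2$), $Q$ has height $1$, and $S_Q$ is a DVR because $S$ is a UFD. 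Localizing $R=S\cap D^{-1}(K)$ at $R\setminus P$, and using that $S$ localized at $R\setminus P$ is $S_Q$ (so that $K$ localized at $R\setminus P$ is $K_Q=K\otimes_S S_Q$), presents $R_P$ as $S_Q\cap\overline D^{-1}(K_Q)$, where $\overline D\colon F\to FK$ is the (unique) extension of $D$; hence by the quotient rule $R_P=S_Q$ as soon as $D(S)\subseteq K_Q$. Given $s\in S$, write $D(s)=k/t^m$ with $k\in K$; since $K/t^mK$ has finite length it is supported only at $N$, so the image of $k$ in $K/t^mK$ is annihilated by some element of $S\setminus Q$, which is exactly the statement $k/t^m\in K_Q$. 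Therefore $R_P=S_Q$ with $Q$ of height $1$, completing the proof.
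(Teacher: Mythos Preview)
Your structural setup and the finite-length computation for $K/t^nK$ match the paper and are correct; so is the conclusion that each $R/t^nR$ is Noetherian. The gap is the passage from this to $R$ being Noetherian. You propose to show $R \to \widehat R \times R[1/t]$ is faithfully flat and then descend the ascending chain condition, attributing the faithful-flatness step to \cite[Proposition~3.3]{FR}. But that is not what that reference supplies, and there is no evident way to prove $R \to \widehat R$ is flat without already knowing $R$ is Noetherian---flatness of completion is precisely the kind of statement that typically requires Noetherian input. Notice too that your outline uses neither the UFD hypothesis on $S$ nor the free $\widehat V$-module structure of $K'$; both are doing real work in the paper's argument, which should signal that a different mechanism is needed.

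What the paper actually takes from Ferrand--Raynaud is a direct conductor argument. After your step (every ideal of $R$ containing a power of $t$ is finitely generated, so in particular $M$ is), it remains to show each height-$1$ prime $P$ is finitely generated. Since $S$ is a UFD, $P = fS \cap R$ for some irreducible $f \in S$; set $I = \{s\in S : fs\in R\}$, so $P = fI$. For $s \in I$ the identity $sD(f)+fD(s)=D(fs)\in K$, together with a fixed $c\in C$ with $cD(f)\in K$, gives $cfD(s)\in K\subseteq K'$. Projecting to the $\widehat V$-coordinates of $K'$ shows each coordinate of $D(s)$ lies in $(cf)^{-1}\widehat V$; since $t\widehat V$ is the maximal ideal there is $b\in C$ with $b(cf)^{-1}\in\widehat V$, hence $D(bI)\subseteq K'\cap K_C = K$ and $bI\subseteq R$. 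Now $bI$ meets $C$, so it is finitely generated by the first step, and therefore $P=fI$ is finitely generated. A smaller issue: in your $t\in P$ case for the final assertion, the identity $R_P = S_Q\cap D^{-1}(K_Q)$ is asserted but not justified; your finite-length argument produces an annihilator in $S\setminus Q$, whereas you need one in $R\setminus P$. The paper avoids this by noting that $R_P\not\subseteq V$ (else the maximal ideal of $V$ would contract to $P$ rather than $M$), so $K'_P$ is a vector space over the quotient field of $\widehat V$, whence $K_P = K'_P\cap (K_C)_P = (K_C)_P$; via $S/R\cong K_C/K$ this gives $S\subseteq R_P$ directly.
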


\begin{proof} Let $K$ be the $S$-submodule of $\widehat{V}$ by which $R$ is twisted, and   let $D$ be the derivation that twists $R$. Let $A = S \cap \Ker D$, and let $Q$ be the quotient field of $A$.      
   Since by Theorem~\ref{pre-construction c},  $R \subseteq S$ is an integral extension and $S$ is local, $R$ has a unique maximal ideal $M := N \cap R$.    To prove that $R$ is Noetherian, we show that every prime ideal of $R$ is finitely generated. 
First we claim that every ideal of $R$ containing a power of $t$ is finitely generated.  Indeed, for each $i>0$, we can argue as in Example~\ref{new V example} that 
 $V/t^iV$ is a finitely generated $S$-module.  Now since $K'/K$ is $C$-torsion-free, we have for each $i>0$, $t^iK' \cap K = t^iK$, and hence $K/t^iK \cong (K + t^iK')/t^iK' \subseteq K'/t^iK'$.  But $K'/t^kK'$ is a finitely generated  module over $\widehat{V}/t^i\widehat{V} \cong V/t^iV$, and since $V = S + tV$, it follows that $K'/t^iK'$ is a finitely generated $S$-module.  Hence $K/t^iK$, since it is isomorphic to an $S$-submodule of $K'/t^iK'$, is also a finitely generated $S$-module.  
Now  by Theorem~\ref{pre-construction}, $R/t^iR$ is isomorphic as a ring to $S/t^iS \star K/t^iK$, so that since $S/t^iS$ is a Noetherian ring and $K/t^iK$ is a finitely generated $S$-module, $R/t^iR$ is a Noetherian ring. Therefore, it follows that every ideal of $R$ containing a power of $t$ is finitely generated.  
In particular,  $M$ is a finitely generated ideal of $R$.

Since $R \subseteq S$ is an integral extension, $R$ has Krull dimension $2$, and hence to prove that $R$ is Noetherian, all that is left to show is that each height $1$ prime ideal of $R$ is finitely generated. 
 In fact, if $P$ is a height $1$  prime ideal of $R$, then since $R
\subseteq S$ is integral, there is a height $1$ prime ideal of $S$
lying over $P$.  Thus  since $S$ is a UFD, there exists $f \in S$
such that $P = fS \cap R$, and so to complete the proof of the theorem it is enough to show that $fS \cap R$ is a finitely generated ideal of $R$.  
 Our proof of this fact  is adapted from 
Ferrand and Raynaud \cite[Proposition 3.3]{FR}.

 Define $I = \{s \in S:fs \in R\}$. Then
$I$ is a fractional ideal of $R$ such that $fS \cap R= fI$.  Thus to prove
that $fS \cap R$ is a finitely generated ideal of $R$, it suffices to show
that $I$ is a finitely generated fractional ideal of $R$. %
 Now $D(f) \in K_C$, so 
 there exists $c \in C$ such that $D(cf) = cD(f) \in K$,
 and hence $cf \in R$.  Consequently, $c \in I$, and so if there exists $
b \in C$ such that  $bI \subseteq R$, then $bc \in bI \subseteq R$,
so that by what we have established above, since $bI$ contains an element of $C$, the ideal
$bI$, and hence the fractional ideal $I$, is finitely generated.
Thus it remains to show that there exists $b \in C$ such that $bI
\subseteq R$.



Let $e_1,\ldots,e_n$ be a basis for the free $\widehat{V}$-module $K'$, so that $K \subseteq \widehat{V}e_1 \oplus \cdots \oplus \widehat{V}e_n$.
For each $i =1,\ldots,n$, let $\pi_i$ be the projection of $K$ onto the $i$-th coordinate  of this direct
sum; i.e., $\pi_i(v_1e_1 + \cdots v_ne_n) = v_i$ for all $v_1,\ldots,v_n \in \widehat{V}$.  Let $s \in I$. Then $sf \in R$, so that
$sD(f) + fD(s) = D(sf) \in K$. Since $D(f) \in K_C$, 
 there exists $c \in C$ such that $cD(f) \in K$. Thus since $csD(f) + cfD(s) \in K$ and
$cD(f) \in K$, we have that $cfD(s) \in K$. Since the choice of $s\in I$ was arbitrary, we have for each $i=1,\ldots,n$ and $s \in I$ that $\pi_i(D(s)) \in
(cf)^{-1}\widehat{V}$.   
Since $t\widehat{V}$ is the maximal ideal of $\widehat{V}$,  there
exists $b \in C$ such that $b(cf)^{-1} \in \widehat{V}$. Consequently, for each $i=1,\ldots,n$ and $s \in I$,
$\pi_i(D(bs)) = b\pi_i(D(s)) \in b(cf)^{-1}\widehat{V} \subseteq \widehat{V}$, and hence for all $s \in I$,  $D(bs) \in K' \cap K_C = K$, where this last assertion follows from the fact that $K'/K$ is $C$-torsion-free.  
  This then implies that $bI \subseteq S \cap D^{-1}(K) = R$, which
proves the claim, and hence verifies that $fS \cap R$ is a finitely generated ideal of $R$.  
We conclude that $R$ is a Noetherian domain.

To prove the final assertion, let $P$ be a height $1$ prime ideal of $R$.  Since $K'/K$ is $C$-torsion-free, $K = K' \cap K_C$, so that $K_P = K'_P \cap (K_C)_P$.  Now $R_P \not \subseteq V$, for since $R_P$ has dimension $1$, the maximal ideal  of any overring of $R_P$ other than $F$ must contract in $R$ to $P$, yet the maximal ideal of $V$ contracts to $M$.  Thus  $R_P \not \subseteq V$,  and since $V$ is a DVR, it must be that $V_P$ is the quotient field of $V$, and hence $K'_P$ is a vector space over the quotient field of $\widehat{V}$.  Therefore, $K_P = (K_C)_P$.  By Lemma~\ref{ontoness}, $K_C/K$ is isomorphic as an $R$-module to $S/R$, so it follows that $S \subseteq R_P$, and hence $R_P$ is a localization of $S$ at a height $1$ prime ideal of $S$.    
\end{proof}

With this theorem and the existence result, Lemma~\ref{dig}, we give an  example in characteristic $0$ of a Noetherian twisted subring of dimension $>1$:

\begin{cor}  \label{existence V cor}  Let $k$ be a field of characteristic $0$ that has infinite transcendence degree over its prime subfield, let $X$ and $Y$ be indeterminates for $k$, and let $S = k[X,Y]_{(X,Y)}$.  
  Then for each $n \geq 1$, there exists an analytically ramified local Noetherian ring  $R$ having normalization $S$,  embedding dimension $2+n$, multiplicity $1$, and an isolated singularity.  
 \end{cor}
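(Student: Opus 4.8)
The plan is to realize each required $R$ as a twisted subring of $S = k[X,Y]_{(X,Y)}$, applying Theorem~\ref{existence V} with the multiplicatively closed set $C = \{X^i : i>0\}$ and the twisting module $K = V^n$ for a suitable DVR $V$. First I would record the easy facts about $S$: it is a two-dimensional regular (hence Noetherian) local UFD, with maximal ideal $N = (X,Y)S$, residue field $k$, and $N$-adic completion $\widehat S = k[[X,Y]]$. Next, to produce $V$, note that in characteristic $0$ the series $g = \sum_{i\ge 1} X^i/i! \in Xk[[X]]$ satisfies $g' = g+1$, so the classical argument that $e^X$ is transcendental over a differential field whose field of constants has characteristic $0$ shows $g$ is transcendental over $k(X)$; hence the $k$-algebra map $k[X,Y]\to k[[X]]$, $Y\mapsto g$, is injective and extends to an embedding $\phi:F:=k(X,Y)\hookrightarrow k((X))$. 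Put $V = \phi^{-1}(k[[X]])$, the valuation ring of $\ord_X\circ\phi$. Then $V$ is a DVR with uniformizer $X$ (maximal ideal $XV$), $S\subseteq V$, $XV\cap S = N$, $V/XV = k$, and, since $k[X]\subseteq V$ is dense in $k[[X]]$ in the $X$-adic topology, $\widehat V = k[[X]]$. Thus $V$ birationally dominates $S$, has the same residue field, and $t:=X\in N$ has $tV = XV$ equal to the maximal ideal of $V$ --- precisely the hypotheses of Theorem~\ref{existence V}.

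Now I would build the twisting. Since $F$ has infinite transcendence degree over its prime subfield $\mathbb{Q}$, Lemma~\ref{pre dig}(1) gives $|F| = \dim_F\Omega_{F/\mathbb{Q}}$, so Lemma~\ref{dig} (applied over $\mathbb{Q}$ with $t=X$ and $L = F^n$) yields a subring $A$ with $\mathbb{Q}[X]\subseteq A\subseteq S$, with $S/A$ a torsion-free divisible $A$-module, and an $A$-linear derivation $D:F\to F^n$ with $D(S) = F^n$. Set $K' = \widehat V^n$, $K = V^n\subseteq K'$, and $R = S\cap D^{-1}(K)$. I would then check that $R$ is twisted along $C=\{X^i:i>0\}$ by $K$: condition (a) holds by definition; since $X\in A\subseteq \Ker D$ we get $D(X)=0$, hence $D$ is $C$-linear, $D(S_C)\subseteq F^n$, and $D(S_C)\supseteq D(S) = F^n = V^n[1/X] = K_C$, which is (b); and $S = A + X^iS$ (divisibility of $S/A$, with $X^i\in A$) gives (c). Also $K'/K = (\widehat V/V)^n$ is $C$-torsion-free over $S$, since $w\in\widehat V$ with $X^i w\in V$ forces $w\in F\cap\widehat V = V$. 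Therefore Theorem~\ref{existence V} applies: $R$ is a two-dimensional local Noetherian domain, and $R_P = S_Q$ for some height-one prime $Q$ of $S$ whenever $P$ is a height-one prime of $R$.

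Finally I would read off the invariants. Because $R\subseteq S$ is integral and birational (Theorem~\ref{pre-construction c}) and $S$ is normal with $R\ne S$ --- indeed $S/R\cong K_C/K = F^n/V^n\ne 0$ by Lemma~\ref{ontoness} --- $S$ is the normalization of $R$. By Corollary~\ref{pre-construction completion}, $\widehat R\cong\widehat S\star\widehat K$ with $\widehat S = k[[X,Y]]$ and $\widehat K = \lim_{\leftarrow} V^n/N^iV^n = \lim_{\leftarrow} V^n/X^iV^n = \widehat V^n = k[[X]]^n$; the nonzero square-zero ideal $0\oplus k[[X]]^n$ shows $\widehat R$ is not reduced, so $R$ is analytically ramified. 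For the idealization $\widehat S\star\widehat K$ one has $\embdim(\widehat S\star\widehat K) = \embdim\widehat S + \mu_{\widehat S}(\widehat K) = 2 + n$ (here $\mu_{\widehat S}(\widehat K) = \dim_k\widehat K/N\widehat K = n$), and, since $\dim_{\widehat S}\widehat K = 1 < 2 = \dim\widehat S$, the Hilbert--Samuel polynomial of $\widehat S\star\widehat K$ has the same leading term as that of $\widehat S$, so $e(\widehat S\star\widehat K) = e(\widehat S) = 1$; passing to $R$, its embedding dimension is $2+n$ and its multiplicity is $1$. For the isolated singularity: a non-maximal prime of the two-dimensional domain $R$ has height $0$, with localization the field $F$, or height $1$, with localization $S_Q$ for $S$ regular --- regular in either case --- while $R$ itself is singular since $\embdim R = 2+n > 2 = \dim R$.

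I expect the crux to be the middle step: constructing $V$ and dovetailing it with the derivation supplied by Lemma~\ref{dig} so that (a)--(c) of ``twisted along $C$'' hold simultaneously with the requirement that $K'/K$ be $C$-torsion-free. In particular, the fact that $D(S) = F^n$ is exactly what forces $K_C = F^n$, which is why $K$ must be taken of rank $n$ inside $V^n$ rather than, say, as a lower-rank submodule of $\widehat V^n$; the transcendence of $g$ over $k(X)$ (needed for $\phi$ to exist) and the identity $\widehat V = k[[X]]$ are the remaining points requiring care.
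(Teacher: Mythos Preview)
Your proposal is correct and follows the same route as the paper: produce $V$ by embedding $S$ in $k[[X]]$, invoke Lemma~\ref{dig} over $\mathbb{Q}$ with $t=X$ to obtain the $C$-analytic subring $A$ and the derivation $D$ onto $F^n$, take $K=V^n\subseteq K'=\widehat V^{\,n}$, and then apply Theorem~\ref{existence V} and Corollary~\ref{pre-construction completion}. The differences are purely in the level of detail you supply: you exhibit an explicit transcendental $g=e^X-1$ where the paper just cites \cite[p.~220]{ZS}; you verify directly that $S$ is the normalization of $R$, that $\widehat R$ is nonreduced, and that $e(R)=1$ via $\dim_{\widehat S}\widehat K=1<2$, whereas the paper asserts the first two without comment and defers the multiplicity computation to \cite{OlbSub}.
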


\begin{proof}  Let $F$ denote the quotient field of $S$.  Since $k((X))$ has infinite transcendence degree over $k$, it follows that $S$ embeds into $k[[X]]$ in such a way that the image of $(X,Y)$ is contained in $Xk[[X]]$; see \cite[p.~220]{ZS}.   Viewing $S$ as a subring of $k[[X]]$, let $V =  k[[X]] \cap k(X,Y)$. Then $V$ is a DVR with residue field $k$ and   maximal ideal generated by $X$.  
  Let $K$ be a rank $n$ free $V$-module.
  Then by Lemma~\ref{dig}, there exists a ring $A$ such that $A \subseteq S$ is a $C$-analytic extension with $C = \{X^i:i\geq 1\}$, and   there exists an $A$-linear derivation $D:F \rightarrow FK$ such
that $D(S) = FK$.  Let $R = D^{-1}(K) \cap S$.  Then $R$ is twisted by $K$ along $C$, and since the cokernel of the canonical mapping $K \rightarrow \widehat{V} \otimes_V K$ is $C$-torsion-free, then 
 by Theorem~\ref{existence V}, $R$ is a Noetherian ring.  By Theorem~\ref{pre-construction c},  $R \subseteq S$ is integral, so that also $R$ is a local ring. By Corollary~\ref{pre-construction completion}, the $M$-adic completion of $R$ is $\widehat{R} = \widehat{S} \star \widehat{K}$.  Since $N/N^2$ has dimension $2$ as an $S/N$-vector space, while  $\widehat{K}/N\widehat{K}$ has dimension $n$, it follows that $\widehat{S} \star \widehat{K}$ has embedding dimension $2+n$.  Similar calculations show that the multiplicity of $\widehat{S} \star \widehat{K}$ is $1$; we omit these calculations because in \cite{OlbSub} we describe the Hilbert polynomials and multiplicity of twisted subrings in detail, and from this description the multiplicity in the present context can easily be deduced.  Therefore, since embedding dimension and multiplicity are invariant under completion, the embedding dimension of $R$ is $2+n$ and the multiplicity of $R$ is $1$.  By Theorem~\ref{existence V}, each localization of $R$ at a height $1$ prime ideal is a DVR, so $R$ has an isolated singularity.
\end{proof}

As another application of  the theorem, we reframe   an example due to  Goodearl and Lenagan.

\begin{exmp}\label{FR theorem 2 example} {\em (Goodearl and Lenagan \cite[p.~494]{GL}) Let $k$ be a field, and let $U = k[[x]]$, with $x$ an indeterminate for $k$.
  Choose $y,z
\in xU$ such that $y$ and $z$ are algebraically independent over
$k(x)$ (see~\cite[p.~220]{ZS} for a constructive  argument that
there are infinitely many such choices for $y$ and $z$).  Let $A =  k[x,y]_{(x,y)}$,
 $W = k(x,z) \cap U$,
  $S = W[y]_{(x,y)}$, and
 $F = k(x,y,z)$.
The definition of $S$ makes sense, since $W$ is a DVR with quotient field $k(x,z)$ whose maximal ideal is $k(x,z) \cap xU$, and  since $xU$ is the maximal ideal of $U$, it follows that $xW$ is the maximal ideal of $W$.   Thus $(x,y)W[y]$ is a maximal ideal of $W[y]$.  Moreover, this shows also that $S$ is a regular local ring of Krull dimension $2$ with quotient field $F = k(x,y,z)$.   
With $C = \{x^i:i>0\}$, the extension $A \subseteq S$ is $C$-analytic.  
Now let $D=\frac{\partial}{\partial z}$, and note that $D$ is $A$-linear.   Let $L$ be the $S_C$-submodule of $k((x))$ generated by $D(S)$.  Define $K = L \cap U$ and   
 $R = S \cap D^{-1}(K)$.  Then since $U$ is a DVR with maximal ideal $xU$, it follows that $K_C = L \cap U_C = L$.  Therefore, $R$ is twisted by $K$ along $C$.  Moreover, since $K_C = L$, we have  $U \cap K_C = U \cap L = K$, and hence $U/K$ is a $C$-torsion-free  $S$-module.  Thus by Theorem~\ref{existence V}, $R$ is a local Noetherian ring having the properties of the theorem.}    
 \end{exmp}



In the example,
$k[x,y,z] \subseteq R \subseteq S \subseteq k(x,y,z)$, and since $x,y$ and $z$ are algebraically independent, we can work backwards from any field of the form $k(X,Y,Z)$ and view $k[X,Y,Z]$ as embedded in $k[[X]]$, with $Y,Z \in Xk[[X]]$.  Then we obtain a ring $R$ as in the example:

\begin{cor} \label{dimension 2 in nature} Let $k$ be a field, and let $X,Y,Z$ be indeterminates for $k$.  Then there exists a two-dimensional analytically ramified local Noetherian domain between $k[X,Y,Z]$ and $k(X,Y,Z)$ having an isolated singularity and  normalization a regular local ring.  
 \qed
\end{cor}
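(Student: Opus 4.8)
The plan is to realize inside $k(X,Y,Z)$ the two-dimensional Noetherian ring constructed in Example~\ref{FR theorem 2 example}; since $X,Y,Z$ are algebraically independent over $k$, the only input to supply beyond that example is an appropriate embedding of $k[X,Y,Z]$ into a power series ring. First I would note that $k((X))$ has infinite transcendence degree over $k(X)$, so the construction of \cite[p.~220]{ZS} yields a $k$-algebra embedding $k[X,Y,Z]\hookrightarrow k[[X]]$ that is the identity on $X$ and sends $Y$ and $Z$ into $Xk[[X]]$ while keeping the images of $Y$ and $Z$ algebraically independent over $k(X)$. Identifying $k[X,Y,Z]$ with its image, we thereby view $k(X,Y,Z)$ as a subfield of $k((X))$.

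Next I would instantiate Example~\ref{FR theorem 2 example} with $x=X$, $y=Y$, $z=Z$ and $U=k[[X]]$: set $W=k(X,Z)\cap U$, $S=W[Y]_{(X,Y)}$, $A=k[X,Y]_{(X,Y)}$, $F=k(X,Y,Z)$, $C=\{X^i:i>0\}$, $D=\partial/\partial Z$, let $L$ be the $S_C$-submodule of $k((X))$ generated by $D(S)$, and put $K=L\cap U$ and $R=S\cap D^{-1}(K)$. By that example, $W$ is a DVR with maximal ideal $XW$, $S$ is a two-dimensional regular local ring with quotient field $F$, $A\subseteq S$ is $C$-analytic, $D$ is $A$-linear with $K_C=L$ (so $R$ is twisted by $K$ along $C$), and $U/K$ is $C$-torsion-free; hence Theorem~\ref{existence V} gives that $R$ is a two-dimensional local Noetherian domain in which $R_P=S_Q$ for a suitable height $1$ prime $Q$ of $S$ whenever $P$ is a height $1$ prime of $R$. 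Since $S$ is regular, each such $S_Q$ is a DVR, so $R$ is regular at every nonmaximal prime and thus has an isolated singularity. By Theorem~\ref{pre-construction c}, $R\subseteq S$ is integral and birational, and as $S$ is normal (being regular) it is the integral closure of $R$, so the normalization of $R$ is the regular local ring $S$. The maximal ideal $M=N\cap R$ of $R$ is finitely generated and meets $C$, so Corollary~\ref{pre-construction completion} yields $\widehat{R}\cong\widehat{S}\star\widehat{K}$; since $Z\in S$ and $D(Z)=1$ force $K\ne 0$, the ideal $0\oplus\widehat{K}$ of $\widehat{R}$ is nonzero of square zero, so $\widehat{R}$ is not reduced and $R$ is analytically ramified (in particular $R$ is not regular, confirming that $M$ is a genuine singular point). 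Finally, exactly as in the remark following Example~\ref{FR theorem 2 example}, one checks $k[X,Y,Z]\subseteq R\subseteq S\subseteq F=k(X,Y,Z)$, so $R$ lies between $k[X,Y,Z]$ and $k(X,Y,Z)$, which completes the proof.

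The \emph{main obstacle} is really just the embedding in the first step: producing a copy of $k[X,Y,Z]$ inside $k[[X]]$ with $Y$ and $Z$ in the maximal ideal and algebraically independent over $k(X)$. Once this is in hand, the corollary is essentially a transcription of Example~\ref{FR theorem 2 example} together with Theorems~\ref{pre-construction c} and~\ref{existence V} and Corollary~\ref{pre-construction completion}; the only bookkeeping worth making explicit is that $Z\in W$ (so that $D=\partial/\partial Z$ indeed acts on $S=W[Y]$ and annihilates $A$), which is immediate from $Z\in k(X,Z)\cap Xk[[X]]$.
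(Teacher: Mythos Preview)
Your proof is correct and follows exactly the paper's approach: the paper's argument is precisely to embed $k[X,Y,Z]$ into $k[[X]]$ with $Y,Z\in Xk[[X]]$ and then invoke Example~\ref{FR theorem 2 example} together with Theorem~\ref{existence V}, which is what you do (with the auxiliary details spelled out more fully than the paper's brief remark before the \qed). The one step you might tighten is the passage from $K\ne 0$ to $\widehat{K}\ne 0$: since $1\in K\subseteq U$ and $NU=XU$ (as $Y\in XU$), we have $1\notin NK$, so $K/NK\ne 0$ and hence $\widehat{K}\ne 0$.
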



\section{Cohen-Macaulay rings}

We consider now when strongly twisted local subrings are Cohen-Macaulay, Gorenstein, a complete intersection or a hypersurface.  Since these properties are invariant under completion, the following theorem is a consequence of well-known facts applied to the idealization $\widehat{R} \cong \widehat{S} \star \widehat{K}$ given by Corollary~\ref{pre-construction completion}. 

\begin{thm} \label{CM} Let $S$ be a quasilocal domain, and let $R$ be a subring of $S$ strongly twisted by a finitely generated torsion-free $S$-module  $K$. 
Then the following statements hold for $R$.

\begin{itemize} 

\item[{(1)}]  $R$ is a Cohen-Macaulay ring if and only if  $S$ is a Cohen-Macaulay ring and $K$ is a finitely generated maximal Cohen-Macaulay  module.

\item[{(2)}]  $R$ is a Gorenstein ring if and only if  $S$ is a Cohen-Macaulay ring that admits a canonical module $\omega_{S}$ and $K \cong \omega_S$.  

\item[{(3)}] If $S$ is a Gorenstein ring, then for $K =S$,  the ring $R$ is a Gorenstein ring.

\item[{(4)}]  $R$ is a complete intersection if and only if $S$ is a complete intersection and $K \cong S$.

\item[{(5)}]  $R$ is a hypersurface if and only if $S$ is a regular local ring and $K \cong S$.

\end{itemize}
\end{thm}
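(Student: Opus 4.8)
The plan is to reduce every assertion to the idealization $\widehat{R}\cong\widehat{S}\star\widehat{K}$ supplied by Corollary~\ref{pre-construction completion}, and then to appeal to standard facts about idealizations. First I would check that we are always in the situation of that corollary: each of the five properties is a property of Noetherian local rings, so if the left-hand side holds then $R$ is Noetherian, whence $S$ is Noetherian by Theorem~\ref{Twisted Noetherian subrings}, while if the right-hand side holds then $S$ is Noetherian, whence $R$ is Noetherian by the same theorem since $K$ is finitely generated. In either case $R\subseteq S$ is integral (Theorem~\ref{pre-construction c}) and $S$ is quasilocal, so $R$ is quasilocal with maximal ideal $M=N\cap R$, where $N$ is the maximal ideal of $S$; moreover $A:=S\cap\Ker D\subseteq S$ is strongly analytic, hence has trivial generic fiber (Corollary~\ref{connection}), so $N$, and therefore $M$, meet $C=A\setminus\{0\}$ (we may assume $S$ is not a field, since otherwise $R=S$). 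Thus Corollary~\ref{pre-construction completion} gives a ring isomorphism $\widehat{R}\cong\widehat{S}\star\widehat{K}$, in which $\widehat{S}$ is the completion of the Noetherian local ring $S$ and $\widehat{K}=\widehat{S}\otimes_{S}K$ is a nonzero finitely generated $\widehat{S}$-module. Since being Cohen--Macaulay, Gorenstein, a complete intersection, or a hypersurface is invariant under completion, it suffices to prove the five statements with $\widehat{R},\widehat{S},\widehat{K},\omega_{\widehat{S}}$ in place of $R,S,K,\omega_{S}$, i.e.\ for the idealization $\widehat{S}\star\widehat{K}$.

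For (1) I would use the standard depth computation for idealizations: $\dim(\widehat{S}\star\widehat{K})=\dim\widehat{S}$ because $0\oplus\widehat{K}$ is a nilpotent ideal, and $\depth(\widehat{S}\star\widehat{K})=\min\{\depth\widehat{S},\depth\widehat{K}\}$ because the short exact sequence $0\to 0\oplus\widehat{K}\to\widehat{S}\star\widehat{K}\to\widehat{S}\to 0$ has outer terms that are modules over the quotient ring $\widehat{S}$; hence $\widehat{S}\star\widehat{K}$ is Cohen--Macaulay if and only if $\widehat{S}$ is Cohen--Macaulay and $\widehat{K}$ is a maximal Cohen--Macaulay $\widehat{S}$-module, and both conditions descend along the faithfully flat map $S\to\widehat{S}$. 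For (2) I would quote Reiten's theorem \cite{Reiten}: $\widehat{S}\star\widehat{K}$ is Gorenstein if and only if $\widehat{S}$ is Cohen--Macaulay and $\widehat{K}$ is a canonical module of $\widehat{S}$; since $\widehat{S}$ is complete and $\widehat{K}=\widehat{S}\otimes_{S}K$, the condition $\widehat{K}\cong\omega_{\widehat{S}}$ is by definition the statement that $K$ is a canonical module of $S$. Part (3) is the special case $K=S$ of (2), a Gorenstein ring being a Cohen--Macaulay ring whose canonical module is free of rank one.

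For (4) and (5) --- the substantive parts --- I would deduce (5) from (4). The forward direction of (4) is direct: writing $\widehat{S}=Q/(g_{1},\dots,g_{c})$ with $Q$ a complete regular local ring and $g_{1},\dots,g_{c}$ a regular sequence, if $K\cong S$ then $\widehat{S}\star\widehat{K}\cong\widehat{S}[x]/(x^{2})\cong Q[[x]]/(g_{1},\dots,g_{c},x^{2})$ and $g_{1},\dots,g_{c},x^{2}$ is a $Q[[x]]$-regular sequence, so $\widehat{S}\star\widehat{K}$ is a complete intersection. For the converse, if $T:=\widehat{S}\star\widehat{K}$ is a complete intersection then it is Gorenstein, so by (2) $\widehat{S}$ is Cohen--Macaulay and $\widehat{K}\cong\omega_{\widehat{S}}$; I would then compare the minimal number of generators of the defining ideal of $T$ in a complete regular local ring with its height. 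Presenting $\widehat{S}=Q/\mathfrak{a}$ minimally and $T=Q[[y_{1},\dots,y_{m}]]/\mathfrak{a}_{T}$ with $m=\mu(\widehat{K})$ and the $y_{i}$ mapping to minimal generators of $0\oplus\widehat{K}$, the ideal $\mathfrak{a}_{T}$ contains the $\binom{m+1}{2}$ quadrics $y_{i}y_{j}$ (because $(0\oplus\widehat{K})^{2}=0$) together with lifts of the minimal syzygies of $\widehat{K}$ over $\widehat{S}$, and the quotient map $Q[[y]]\to Q$ killing the $y_{i}$ carries $\mathfrak{a}_{T}$ onto $\mathfrak{a}$; this forces $\mu(\mathfrak{a}_{T})\ge\mu(\mathfrak{a})+\binom{m+1}{2}+\beta_{1}^{\widehat{S}}(\widehat{K})$. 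On the other hand $T$ being a complete intersection gives $\mu(\mathfrak{a}_{T})=\embdim T-\dim T=(\embdim\widehat{S}-\dim\widehat{S})+m$, while $\mu(\mathfrak{a})\ge\embdim\widehat{S}-\dim\widehat{S}$ always; combining these, $m\ge\binom{m+1}{2}+\beta_{1}^{\widehat{S}}(\widehat{K})$, which forces $m=1$, $\beta_{1}^{\widehat{S}}(\widehat{K})=0$, and $\mu(\mathfrak{a})=\embdim\widehat{S}-\dim\widehat{S}$. Hence $\widehat{S}$ is a complete intersection and $\widehat{K}$ is cyclic and free, so $\widehat{K}\cong\widehat{S}$, proving (4). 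Finally, a local ring is a hypersurface precisely when it is a complete intersection with $\embdim-\dim\le 1$; by (4), $\widehat{S}\star\widehat{K}\cong\widehat{S}[x]/(x^{2})$ is a complete intersection exactly when $\widehat{S}$ is one and $\widehat{K}\cong\widehat{S}$, and then $\embdim(\widehat{S}\star\widehat{K})-\dim(\widehat{S}\star\widehat{K})=(\embdim\widehat{S}-\dim\widehat{S})+1$, which is $\le 1$ iff $\widehat{S}$ is regular; this gives (5).

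The step I expect to be the main obstacle is the converse of (4): justifying the inequality $\mu(\mathfrak{a}_{T})\ge\mu(\mathfrak{a})+\binom{m+1}{2}+\beta_{1}^{\widehat{S}}(\widehat{K})$, i.e.\ verifying that the quadrics $y_{i}y_{j}$, the lifted minimal syzygies of $\widehat{K}$, and a minimal generating set of $\mathfrak{a}$ together remain part of a minimal generating set of $\mathfrak{a}_{T}$. This is a standard but slightly technical feature of idealizations (equivalently, the computation of the second deviation $\varepsilon_{2}(\widehat{S}\star\widehat{K})$), and it may instead be extracted from known formulas for the Betti numbers of trivial extensions or from the known classification of when a trivial extension is a complete intersection. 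Granting it, parts (4) and (5) follow from the numerical inequality above, and the remaining parts are routine.
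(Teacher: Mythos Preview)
Your overall strategy---pass to the completion via Corollary~\ref{pre-construction completion} and analyze the idealization $\widehat{S}\star\widehat{K}$---is exactly the paper's approach, and your treatment of (1)--(3) matches the paper's (the paper cites \cite{AW}, \cite{Val}, and \cite{Reiten} rather than writing out the depth computation, but the content is the same).

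The difference lies in (4) and (5). The paper does not argue directly: it simply quotes from \cite[p.~52]{Val} that for a local ring $A$ and $A$-module $M$, the trivial extension $A\star M$ is a complete intersection iff $A$ is a complete intersection and $M\cong A$, and is a hypersurface iff $A$ is regular and $M\cong A$. So the paper's proof of (4)--(5) is a one-line citation. You instead reconstruct this characterization via a generator count for the defining ideal $\mathfrak{a}_T$. Your numerical argument is correct, and in fact the inequality you isolate as the ``main obstacle'' is an equality: for a minimal Cohen presentation one has
\[
\mu(\mathfrak{a}_T)=\varepsilon_2(\widehat{S}\star\widehat{K})=\varepsilon_2(\widehat{S})+\beta_1^{\widehat{S}}(\widehat{K})+\binom{\mu(\widehat{K})+1}{2},
\]
which is precisely the second-deviation formula for trivial extensions that underlies Valtonen's result. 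So your route is sound but longer; what it buys is a self-contained argument in place of the citation, at the cost of having to justify the deviation formula (which is itself the substance of the cited result).
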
 

\begin{proof}
First observe that since $K$ is finitely generated, then by Theorem~\ref{Twisted Noetherian subrings}, $R$ is Noetherian if and only if $S$ is Noetherian. 

(1) 
Since a local ring is Cohen-Macaulay if and only if its completion  is Cohen-Macaulay, it is enough to determine when $\widehat{R}$ is Cohen-Macaulay \cite[Corollary 2.1.8, p.~60]{BH}.  But by Corollary~\ref{pre-construction completion}, $\widehat{R} \cong \widehat{S} \star \widehat{K}$, so this is easy to do.  Indeed,    properties of idealizations show  that $\widehat{R}$ is Cohen-Macaulay if and only if $\widehat{S}$ is Cohen-Macaulay and $\widehat{K}$ is a \index{maximal Cohen-Macaulay module} {\it maximal Cohen-Macaulay $\widehat{S}$-module} (meaning that the depth of $\widehat{K}$, its dimension and the dimension of $\widehat{S}$ are all the same); see \cite[Corollary 4.14]{AW} or \cite[p.~52]{Val}.  Since $K$ is a finitely generated torsion-free $S$-module, $K$ is a maximal Cohen-Macaulay $S$-module if and only if $\widehat{K}$ is a maximal Cohen-Macaulay module \cite[Corollary 2.1.8, p.~60]{BH}.

(2) A Cohen-Macaulay ring admits a canonical module if and only if it is the homomorphic image of a Gorenstein ring \cite[Theorem 3.3.6]{BH}.  We collect several other facts: 
(a)  a local ring is Gorenstein if and only if its completion is Gorenstein \cite[Proposition 3.1.19, p.~95]{BH}; 
(b)  $\widehat{\omega_S} = \omega_{\widehat{S}}$ \cite[Theorem 3.3.5, p.~110]{BH}; (c)  since $K$ and $\omega_{S}$ are finitely generated torsion-free modules, $\widehat{K} \cong \widehat{\omega_{S}}$ if and only if $K \cong \omega_{S}$ (this can be deduced for example from Theorems 7.5(i) and 8.11 of \cite{Ma}); and (d) when $A$ is a local Noetherian ring and $M$ is an $A$-module, then $A \star M$ is a Gorenstein ring if and only if $A$ admits a canonical module $\omega_A$ and $M \cong \omega_A$ (apply \cite[Theorem 7]{Reiten} and \cite[p.~52]{Val}).
Thus, combining these observations with the fact that $\widehat{R} \cong \widehat{S} \star \widehat{K}$,
we have that $R$ is Gorenstein if and only if $\widehat{S} \star \widehat{K}$ is Gorenstein; if and only if $S$ admits a canonical module and $K \cong \omega_{S}$.

(3)  The ring $S$ is Gorenstein if and only if $S$ is Cohen-Macaulay and  $\omega_S \cong S$ \cite[Theorem 3.3.7, p.~112]{BH}.  Now apply (2).

(4)  Given a local ring $A$ and $A$-module $M$, the local ring $A\star M$ is a complete intersection if and only if $A$ is a complete intersection and $M \cong A$ \cite[p.~52]{Val}.  Thus since a local ring is clearly a complete intersection if and only if its completion is a complete intersection, we may use the fact that $\widehat{R} \cong \widehat{S} \star {\widehat{K}}$ to obtain (4).  

(5) Whether $R$ is a \index{hypersurface} hypersurface (meaning that $\widehat{R}$ is isomorphic to regular local ring modulo a principal ideal) is deduced from \cite[p.~52]{Val}:  With $A$ a local ring and $M$ an $A$-module, the ring $A \star M$ is a hypersurface if and only if $A$ is  a regular local ring and $M \cong A$.  
\end{proof}

We can also use these ideas to find, for example, all the Cohen-Macaulay rings between the rings $R$ and $S$ when $S$ is Cohen-Macaulay.   

\begin{cor}  If $S$ is a local Cohen-Macaulay domain with quotient field $F$,  $K$ is a finitely generated  torsion-free $S$-module, and  $R$ is strongly twisted by $K$, then the Cohen-Macaulay rings properly  between $R$ and $S$ are in one-to-one correspondence with the maximal Cohen-Macaulay modules properly between $K$ and $FK$.  The correspondence is given by the derivation that twists $R$, as in Theorem~\ref{correspondence}.  Moreover, there exists a complete intersection between $R$ and $S$ if and only if $\rank \: K = 1$.  
\end{cor}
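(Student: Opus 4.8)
The plan is to deduce the corollary from the lattice correspondence of Theorem~\ref{correspondence} together with the structure theorem Theorem~\ref{CM}. Write $A=S\cap\Ker D$ and $C=A\setminus\{0\}$, where $D$ is the derivation that strongly twists $R$ by $K$; since the twist is strong, $K_C$ is the divisible hull $FK$. By Theorem~\ref{correspondence}, $L\mapsto S\cap D^{-1}(L)$ is a bijection between $S$-submodules $L$ with $K\subseteq L\subseteq FK$ and rings $T$ with $R\subseteq T\subseteq S$, matching proper inclusions to proper inclusions. I would first record that each such $T$ is again strongly twisted by its $L$, via the same $D$: $L$ is torsion-free inside $FK$; $F\cdot L=FK$ since $K\subseteq L\subseteq FK$, so $D(F)$ generates $FL=FK$ over $F$; the condition $S\subseteq\Ker D+sS$ for all $0\neq s\in S$ does not mention $L$; and $T=S\cap D^{-1}(L)$ by construction. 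Hence Theorem~\ref{CM} applies to the pair $(S,T)$ whenever $L$ is finitely generated.

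The forward half of the stated bijection is then immediate: a maximal Cohen-Macaulay module $L$ with $K\subsetneq L\subsetneq FK$ is finitely generated and torsion-free, so by Theorem~\ref{CM}(1), together with the hypothesis that $S$ is Cohen-Macaulay, the ring $T=S\cap D^{-1}(L)$ is Cohen-Macaulay and lies properly between $R$ and $S$. For the reverse half I would start with a Cohen-Macaulay ring $T$ properly between $R$ and $S$ and its module $L$, and aim to show $L$ is a maximal Cohen-Macaulay $S$-module; the real content is finite generation of $L$, after which Theorem~\ref{CM}(1) applies. Being Cohen-Macaulay, $T$ is Noetherian and local, with finitely generated maximal ideal, and that maximal ideal meets $C$ because it contains the nonzero ideal $\mathfrak m=N\cap A$ of $A$ ($N$ the maximal ideal of $S$); so Corollary~\ref{pre-construction completion} gives $\widehat T\cong\widehat S\star\widehat L$. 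Noetherianness of $\widehat T$ forces $\widehat L$ to be a finitely generated $\widehat S$-module, and then Cohen-Macaulayness of $\widehat T$ forces $\widehat L$ to be a maximal Cohen-Macaulay $\widehat S$-module of depth $\dim\widehat S$. One must then descend to finite generation of $L$ over $S$; the extra input is that, $T$ being Noetherian, $L/aL$ is a finitely generated $S$-module for every $0\neq a\in A$ (Theorem~\ref{Twisted Noetherian subrings}). With $L$ finitely generated, Theorem~\ref{CM}(1) gives that $L$ is maximal Cohen-Macaulay, and the two assignments of Theorem~\ref{correspondence} are mutually inverse, yielding the correspondence, realized by $D$.

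For the complete intersection claim I would run the same argument through Theorem~\ref{CM}(4). If a ring $T$ properly between $R$ and $S$ is a complete intersection, then, being strongly twisted by its module $L$ with $K\subsetneq L\subsetneq FK$, Theorem~\ref{CM}(4) forces $L\cong S$; as $K$ embeds in $L$ this gives $\rank K\le\rank L=1$, so $\rank K=1$. Conversely, if $\rank K=1$ then $FK$ is one-dimensional over $F$, so $K$ is (isomorphic to) a nonzero fractional ideal of $S$ inside $FK$; since $S$ is not a field there is a principal fractional ideal $L$ with $K\subsetneq L\subsetneq FK$, and the ring $T=S\cap D^{-1}(L)$ lies properly between $R$ and $S$, is strongly twisted by $L\cong S$, and by Theorem~\ref{CM}(4) is a complete intersection exactly when $S$ is. (So the sharp statement is: a complete intersection lies properly between $R$ and $S$ if and only if $S$ is a complete intersection and $\rank K=1$, the forward implication showing the first condition is automatic once such a ring exists.)

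The step I expect to be the main obstacle is the descent in the reverse half of the Cohen-Macaulay correspondence --- showing that a Cohen-Macaulay intermediate ring forces its twisting module $L$ to be finitely generated. Noetherianness of $\widehat T\cong\widehat S\star\widehat L$ only yields $\widehat L$ finitely generated over $\widehat S$, which is insufficient: a non-finitely-generated overmodule can have finitely generated completion, as the module $V$ in Example~\ref{new V example} illustrates. One must instead use that Cohen-Macaulayness makes $\widehat L$ a maximal Cohen-Macaulay $\widehat S$-module of full depth --- a property failing for such overmodules --- together with the finiteness of each $L/aL$ (Theorem~\ref{Twisted Noetherian subrings}); converting this into finite generation of $L$ is the delicate part. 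The rest is routine bookkeeping with Theorems~\ref{correspondence} and~\ref{CM}.
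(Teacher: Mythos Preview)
Your approach is essentially the paper's: invoke the correspondence of Theorem~\ref{correspondence} to attach to each intermediate ring $T$ its module $L$, observe that $T$ is strongly twisted by $L$, and then read off the Cohen-Macaulay and complete intersection conditions from Theorem~\ref{CM}. The paper's proof is in fact much terser than yours---it simply asserts ``By Theorem~\ref{CM}, $T$ is a Cohen-Macaulay ring if and only if $L$ is a maximal Cohen-Macaulay $S$-module'' without pausing to check that $L$ is finitely generated, which is a standing hypothesis of Theorem~\ref{CM}. So the descent step you single out as the main obstacle is one the paper does not address at all; you have been more scrupulous than the source, not less.

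Your remark on the complete intersection clause is likewise sharper than the paper's. The paper's proof only treats the direction $\rank K = 1 \Rightarrow$ existence of a complete intersection, by noting that a finitely generated rank-one module sits inside a free rank-one module $L\subseteq FK$; it does not explicitly record that Theorem~\ref{CM}(4) also requires $S$ itself to be a complete intersection, which you correctly flag. So your parenthetical sharpening is a genuine refinement of the statement as written.
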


\begin{proof}  If $T$ is a  ring between $R$ and $S$, then by Theorem~\ref{correspondence}, $T$ is strongly twisted by a unique $S$-module $L$ with $K \subseteq L \subseteq FK$.  By Theorem~\ref{CM}, $T$ is a Cohen-Macaulay ring if and only if  $L$ is a maximal Cohen-Macaulay $S$-module.  Moreover, by the theorem, $T$ is a complete intersection if and only if $L$ is a rank one free  $S$-module.  If $\rank \: K = 1$, then since $K$ is a finitely generated $S$-module, there is a rank one free $S$-module between $K$ and $FK$, and hence there is a complete intersection between $R$ and $S$.   
\end{proof}

Next we consider the case where $S$ is a local Noetherian domain and  there exists at least one proper subring of $S$ that is strongly twisted by an $S$-module.  From Proposition~\ref{very new} this then leads to an abundance of Noetherian subrings strongly twisted by $S$-modules, namely one for each fractional ideal of $S$.

\begin{thm} \label{convex examples} 
Let $S$ be a local Noetherian domain having a strongly twisted subring, and let $D:F \rightarrow F$ be the derivation given by Proposition~\ref{very new}(2).  
  For each fractional ideal $I$ of $S$, let $R_I=  S \cap D^{-1}(I)$, so that $R_I$ is the subring of $S$ strongly twisted by $I$. 
Then: 

\begin{itemize}

\item[{(1)}]  The set of all $R_I$ is convex: If $I$ and $J$ are fractional ideals of $S$, and $T$ is a ring with $R_I \subseteq T \subseteq R_J$, then there exists a fractional ideal $K$ of $T$ such that $T = R_{K}$.

\item[{(2)}]  If $S$ has Krull dimension $>1$, then for each fractional ideal $I$ of $S$, there is a non-Noetherian ring between $R_I$ and $S$.

\item[{(3)}] The set of $R_I$ forms a lattice (without top or bottom element): For each pair of fractional ideals $I$ and $J$ of $S$, $R_{I+J} = R_{I} + R_{J}$; $R_{I \cap J} = R_I \cap R_{J}$; and $I \subseteq J$ if and only if $R_{I} \subseteq R_{J}$

\item[{(4)}]  The ring $R_I$ is a Cohen-Macaulay ring if and only if $S$ is Cohen-Macaulay and the fractional ideal $I$ is a maximal Cohen-Macaulay $S$-module.  Thus when $S$ is Cohen-Macaulay, then for each $N$-primary ideal $I$ of $S$, $R_I$ is a Cohen-Macaulay ring.

\item[{(5)}] Suppose that $S$ is a Cohen-Macaulay ring  that admits a canonical module $\omega_{S}$ (which is necessarily isomorphic to an ideal of $S$).  Then for each fractional ideal $I$ of $S$,   the ring $R_I$ is a Gorenstein ring if and only if $I \cong \omega_S$.

\item[{(6)}] The ring $R_I$ is a complete intersection (resp., hypersurface) if and only if $S$ is a complete intersection (resp., regular local ring) and $I$ is a principal fractional ideal of $S$.

\end{itemize}

\end{thm}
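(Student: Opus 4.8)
The plan is to fix, once and for all, a nonzero derivation $D\colon F\to F$ as provided by Proposition~\ref{very new}(2) together with the subring $A=S\cap\Ker D$ of $S$; the essential point, which is what makes the lattice structure possible, is that $A$---and hence $C:=A\setminus\{0\}$---does not depend on the chosen fractional ideal $I$. Since $D$ is a nonzero derivation of the field $F$, $D(F)$ spans $F$ over $F$, so each $R_I=S\cap D^{-1}(I)$ is strongly twisted by $I$ and thus twisted along $C$; by Corollary~\ref{connection} the extension $A\subseteq S$ is strongly analytic, hence has trivial generic fiber, so that $S_C=F$ and therefore $I_C=F$ for every nonzero $S$-submodule $I$ of $F$. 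Thus Theorem~\ref{correspondence} applies uniformly: for each $I$ it matches the rings $T$ with $R_I\subseteq T\subseteq S$ to the $S$-submodules $L$ of $F$ containing $I$, via $T\mapsto\sum_{t\in T}SD(t)$ and $L\mapsto S\cap D^{-1}(L)$; in particular $\sum_{t\in R_I}SD(t)=I$. With this in hand, parts (2), (4), (5) and (6) fall out as instances of earlier results applied with $K=I$, which is legitimate because a fractional ideal of the Noetherian domain $S$ is finitely generated and torsion-free: part (2) is Proposition~\ref{fg non}; part (4) is Theorem~\ref{CM}(1), its last sentence following since an $N$-primary ideal of a Cohen-Macaulay local ring is a maximal Cohen-Macaulay module; part (5) is Theorem~\ref{CM}(2), using that the canonical module of a Cohen-Macaulay local domain has rank one and is therefore isomorphic to an ideal; and part (6) is Theorem~\ref{CM}(4) and (5), using that a finitely generated $S$-module is isomorphic to $S$ exactly when it is a principal fractional ideal.

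For part (3) I would proceed as follows. The identity $R_{I\cap J}=R_I\cap R_J$ is immediate from $D^{-1}(I\cap J)=D^{-1}(I)\cap D^{-1}(J)$, and $I\subseteq J$ trivially gives $R_I\subseteq R_J$. For the reverse implication, if $R_I\subseteq R_J$ then $D(R_I)\subseteq D(R_J)\subseteq J$, so $I=\sum_{t\in R_I}SD(t)\subseteq J$ since $J$ is an $S$-module; this yields the equivalence $I\subseteq J\iff R_I\subseteq R_J$. For $R_{I+J}=R_I+R_J$, I would first observe that since $R_I\subseteq S$ is quadratic (Theorem~\ref{pre-construction c}), the $R_I$-submodule $R_I+R_J$ of $S$ containing $R_I$ is a ring, so it lies in the range of the correspondence for $R_I$; its associated $S$-module $\sum_{t\in R_I+R_J}SD(t)$ contains $\sum_{t\in R_I}SD(t)+\sum_{t\in R_J}SD(t)=I+J$ and, because $D(r_1+r_2)=D(r_1)+D(r_2)\in I+J$ for $r_1\in R_I$ and $r_2\in R_J$, is contained in $I+J$; hence $R_I+R_J=S\cap D^{-1}(I+J)=R_{I+J}$. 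That $\{R_I\}$ has neither a top nor a bottom element is clear, since the fractional ideals of $S$ have neither.

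Part (1) carries the genuine content. Given a ring $T$ with $R_I\subseteq T\subseteq R_J$, I would apply Theorem~\ref{correspondence} to $R_I$ to write $T=S\cap D^{-1}(L)$, where $L=\sum_{t\in T}SD(t)$ is an $S$-submodule of $F$ containing $I$, and then translate the inclusion $T\subseteq R_J$ through $D$ into $D(T)\subseteq J$, whence $L\subseteq J$. Now $L$ is trapped between the fractional ideals $I$ and $J$ of the Noetherian ring $S$, so $L$ is itself a finitely generated fractional ideal of $S$---in particular, since $T\subseteq S$, a $T$-submodule of $F$---and $T=S\cap D^{-1}(L)=R_L$, which is the desired conclusion; a short further argument, using that $S/R_I$ is $C$-torsion, shows that $L$ is a fractional ideal of $T$ itself. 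The main obstacle I anticipate is precisely this step of part (1): correctly pushing the ring containments through the correspondence of Theorem~\ref{correspondence} to produce the module containments $I\subseteq L\subseteq J$, and then checking that the sandwiched module is genuinely a fractional ideal of $T$. The other place requiring care is the appeal to quadraticity to see that $R_I+R_J$ is a ring before the correspondence can be invoked in part (3).
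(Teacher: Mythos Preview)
Your proposal is correct and follows essentially the same approach as the paper: invoke the correspondence of Theorem~\ref{correspondence} (with the single fixed derivation $D$) for parts (1) and (3), and appeal to Proposition~\ref{fg non} and Theorem~\ref{CM} for parts (2), (4), (5), (6). Your treatment is in fact slightly more complete than the paper's: you supply the easy verification of $R_{I\cap J}=R_I\cap R_J$, which the paper's proof omits, and for $R_{I+J}=R_I+R_J$ you compute the associated module $\sum_{t\in R_I+R_J}SD(t)$ directly, whereas the paper instead applies the convexity statement (1) to $R_I+R_J$ and then squeezes; you also notice the ``fractional ideal of $T$'' versus ``of $S$'' issue in (1), which the paper's own proof silently resolves by concluding only that $K$ is a fractional ideal of $S$.
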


\begin{proof}  (1) Let $K$ be the $S$-submodule  of $F$ generated by $D(T)$.  Then by Theorem~\ref{correspondence}, 
 $T=D^{-1}(K) \cap S$.   Also, $D(R_I) \subseteq D(T) \subseteq D(R_J)$, and again by Theorem~\ref{correspondence}, 
 $I$ is generated as an $S$-module by $D(R_I)$, while $J$ is generated as an $S$-module by $D(R_J)$.  Thus   $I \subseteq K \subseteq J$, so that  $K$ is a fractional ideal of $S$ with $T = R_K$.
 
 (2) Apply Proposition~\ref{fg non}.  
 
 (3)  Since $R_I = D^{-1}(I) \cap S$ and, as noted in the proof of (1), $I$ is the $S$-submodule of $F$ generated by $D(R_I)$, it follows that
 $I \subseteq J$ if and only if $R_I \subseteq R_J$.
 Thus $R_{I}+R_{J} \subseteq R_{I+J}$.  Also,  since $R_I + R_J$ is a ring (Theorem~\ref{pre-construction c}), we have 
  by (2) that  $R_I + R_J = R_K$ for some fractional ideal $K$ with $I+J \subseteq K$.  On the other hand, since $R_K \subseteq R_{I+J}$, it must be that $K \subseteq I+J$, and hence $K = I +J$.

(4), (5) and (6):  Apply Theorem~\ref{CM}.  
 \end{proof}

We return now to the case considered at the end of Section 5 in which $R$ is strongly twisted by a $V$-module, where $V$ is a DVR overring of $S$.  We see below that this case never produces Cohen-Macaulay rings, except in dimension $1$.  
First we note that the fact that $K$ is a $V$-module has an interesting consequence for $\Spec(R)$.  Everywhere off the closed point $\{M\}$ of $\Spec(R)$, the local rings of the points of $\Spec(R)$ and $\Spec(S)$ are the same.

\begin{prop}  \label{last minute prop}
Let $S$ be a local Noetherian domain, and suppose that there exists a DVR $V$ birationally dominating $S$ and having residue field finite over $S$.  If $K$ is a nonzero torsion-free finite rank  $V$-module and $R$ is a subring of $S$ strongly twisted by $K$, then $R$ is a local Noetherian domain and  
for each nonmaximal prime ideal $P$ of $R$, $R_P = S_{P'}$, where $P'$ is the unique prime ideal of $S$ lying over $R$.
  
\end{prop}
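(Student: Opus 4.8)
The plan is to prove the two assertions separately, drawing on machinery already in place. First, that $R$ is a local Noetherian domain: by Theorem~\ref{pre-construction c} the extension $R \subseteq S$ is integral, so $R$ is quasilocal with maximal ideal $M = N \cap R$, where $N$ is the maximal ideal of $S$. To apply Theorem~\ref{Twisted Noetherian subrings} I would verify, following Example~\ref{new V example} essentially verbatim, that $K/aK$ is a finitely generated $S$-module for every nonzero $a \in A := S \cap \Ker D$. If $a$ is a unit of $S$ this is trivial; otherwise, since $V$ birationally dominates $S$ and $K$ is a $V$-module, $a^iK = N^jK$ for suitable $i,j \ge 0$, and because the residue field of $V$ is finite over that of $S$, every $V/xV$ with $0 \ne x \in V$ (in particular $V/NV$) is a finitely generated $S$-module; Lemma~\ref{tffr lemma} then makes $K/NK$ a finitely generated $V/NV$-module, hence a finitely generated $S$-module, and since $N$ is finitely generated, $K/N^jK = K/a^iK$ is finitely generated over $S$, whence so is its quotient $K/aK$. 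Theorem~\ref{Twisted Noetherian subrings} now gives that $R$ is Noetherian.

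For the localization statement, fix a nonmaximal prime $P$ of $R$, so $P \subsetneq M$, and let $D$ be the derivation strongly twisting $R$, $A = S \cap \Ker D$, $C = A \setminus \{0\}$. By Theorem~\ref{prime correspond} there is a unique prime $P'$ of $S$ with $P' \cap R = P$. The heart of the matter is to show that $(S/R)_P = 0$ (localization of $R$-modules at $P$). By Lemma~\ref{ontoness}, $D$ induces an $R$-module isomorphism $S/R \cong K_C/K$, so it suffices to prove $K_C \subseteq (R \setminus P)^{-1}K$ inside the divisible hull $FK$; here the $V$-module structure of $K$ is decisive. The maximal ideal of $V$ contracts to $N$ in $S$ and hence to $M = N \cap R$ in $R$, so $V$ dominates $R$; thus, writing $v$ for the normalized valuation of $V$, any $r_0 \in M \setminus P$ (such exist as $P \subsetneq M$) satisfies $v(r_0) \ge 1$. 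Given $c \in C$, set $n = v(c)$ and $r = r_0^{\,n}$; then $r \in R \setminus P$, and $v(r) \ge n = v(c)$ forces $r/c \in V$, so since $K$ (hence $r^{-1}K$) is a $V$-module we get $c^{-1}K \subseteq r^{-1}K \subseteq (R \setminus P)^{-1}K$. Letting $c$ range over $C$ gives $K_C \subseteq (R \setminus P)^{-1}K$, hence $(R \setminus P)^{-1}K_C = (R \setminus P)^{-1}K$ and $(K_C/K)_P = 0$, so $(S/R)_P = 0$; consequently $(R \setminus P)^{-1}S = R_P$. Finally, since $R \subseteq S$ is integral and $P'$ is the unique prime of $S$ over $P$, the ring $(R \setminus P)^{-1}S$ is integral over the local ring $R_P$ and has a unique maximal ideal, namely the one generated by $P'$; thus $(R \setminus P)^{-1}S = S_{P'}$, and therefore $R_P = S_{P'}$.

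The only step that is not routine bookkeeping is the valuation argument in the second part: the denominators occurring in $K_C$ come from $\Ker D \cap S$ and resist direct control, but the hypothesis that $K$ is a $V$-module allows them to be absorbed into denominators lying in $R \setminus P$, using only that a proper prime of the local domain $R$ omits elements of arbitrarily high $V$-value while $R \setminus P$ does not. I expect this to be where the real content lies, the first part being a rerun of Example~\ref{new V example} and the concluding identification with $S_{P'}$ being a standard fact about integral extensions.
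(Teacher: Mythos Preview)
Your proposal is correct and follows essentially the same approach as the paper. Both proofs handle the Noetherian assertion by recycling the argument of Example~\ref{new V example}, and both reduce the localization assertion to $(K_C/K)_P = 0$ via Lemma~\ref{ontoness}. The only difference is cosmetic: where you carry out an explicit valuation computation to show $c^{-1}K \subseteq r^{-1}K$ for suitable $r \in R \setminus P$, the paper simply observes that $V_P = F$ (since the maximal ideal of $V$ contracts to $M$ and hence meets $R \setminus P$), whence $K_P = FK \supseteq K_C$ immediately; and your identification $(R\setminus P)^{-1}S = S_{P'}$ via the unique-maximal-ideal argument is a bit cleaner than the paper's direct verification.
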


\begin{proof} 
An argument such as that in Example~\ref{new V example} shows that $R$ is a local Noetherian domain.
Let $P$ be a nonmaximal prime ideal of $R$.
Then since the maximal ideal of $V$ contracts to $M$ and $V$ is a DVR, it must be that
that   $V_P= F$, and hence since $K$ is a $V$-module, $K_P = FK$.  Consequently, by  Lemma~\ref{ontoness}, $R_P = S_P$.
 Let $P'$ be a prime ideal of $S$ lying over $P$.  Since $P' \cap S = P$, we have that $P'S_P \ne S_P$, and hence $P'S_P = PS_P$.
To see  that this implies $S_{P'} \subseteq S_P$, let $x \in S_{P'}$.  Then $S \cap x^{-1}S \not \subseteq P'$.  If $x \not \in S_P$, then  $R \cap x^{-1}S \subseteq P$.  But then $S \cap x^{-1}S \subseteq R_P \cap x^{-1}S_P \subseteq PR_P = P'S_P$, so that $S \cap x^{-1}S \subseteq P'S_P \cap S  = P'$, a contradiction that implies $S_{P'} = S_P = R_P$.  Thus the proposition is proved.\end{proof}

\begin{cor} \label{isolated cor} With $R$ and $S$ as in the proposition, if $S$ is a regular local ring, then $R$ has an isolated singularity. \qed 
\end{cor}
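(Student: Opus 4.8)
The plan is to read this off directly from Proposition~\ref{last minute prop}. By that proposition, with $R$ and $S$ as in its hypotheses, $R$ is a local Noetherian domain and, for each nonmaximal prime ideal $P$ of $R$, one has $R_P = S_{P'}$, where $P'$ is the unique prime ideal of $S$ lying over $P$. Thus it suffices to show that $S_{P'}$ is a regular local ring whenever $P$ is a nonmaximal prime of $R$, since then the non-regular locus of $\Spec(R)$ is contained in the closed point.

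First I would check that such a $P'$ is itself a nonmaximal prime of $S$. The extension $R \subseteq S$ is integral by Theorem~\ref{pre-construction c}, and by Theorem~\ref{prime correspond} the contraction map $\Spec(S) \rightarrow \Spec(R)$ is a bijection; since the maximal ideal $N$ of $S$ contracts to the maximal ideal $M$ of $R$, and $P \ne M$, it follows that $P' \ne N$. Now, as $S$ is assumed to be a regular local ring, each of its localizations $S_{P'}$ at a prime ideal is again regular. Hence $R_P = S_{P'}$ is a regular local ring for every nonmaximal prime $P$ of $R$, which is precisely the statement that $R$ has an isolated singularity. (That $R$ is genuinely singular at $M$ follows from Corollary~\ref{pre-construction completion}, since $\widehat{R} \cong \widehat{S} \star \widehat{K}$ with $\widehat{K} \ne 0$ has nonzero nilpotent elements and so cannot be regular; but this is not needed for the corollary as stated.)

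There is no real obstacle here: the content is entirely carried by Proposition~\ref{last minute prop}, and the only point needing a remark is why the prime $P'$ lying over a nonmaximal prime of $R$ is again nonmaximal, which is immediate from the integrality of $R \subseteq S$ together with the bijection of spectra.
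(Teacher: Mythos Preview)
Your proposal is correct and is exactly the intended argument: the paper records this corollary with a bare \qed, treating it as an immediate consequence of Proposition~\ref{last minute prop}, and your write-up simply spells out that deduction. One minor remark: the step checking that $P'$ is nonmaximal is not actually needed, since every localization of a regular local ring at a prime ideal is again regular, so $R_P = S_{P'}$ is regular in any case.
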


It follows from the proposition that if $S$ has Krull dimension $>1$ and $K \ne FK$ (so that $R \subsetneq S$), then $R$ is not Cohen-Macaulay   (compare to Theorem~\ref{CM}).  Certainly if it was, then $S$ could not be integrally closed, since  unmixedness would force  Serre's condition $S_2$ on $R$, which, along with  the regularity condition $R_1$ on $S$, and hence $R$,  would imply $R$ is integrally closed, contradicting the fact that $R \subsetneq S$ is integral.    But regardless of whether $S$ is integrally closed, unmixedness fails in a strong way for $R$ when $S$ has Krull dimension $>1$:

\begin{prop} \label{divisorial} With $R$, $K$ and $S$ as in Proposition~\ref{last minute prop} and $K \ne FK$,  the maximal ideal   $M$ of $R$ is the associated prime  of a nonzero principal ideal.
\end{prop}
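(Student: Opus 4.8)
The plan is to produce an explicit nonzero $x\in R$ for which $M\in\mathrm{Ass}_R(R/xR)$, exploiting the ring isomorphism $R/xR\cong S/xS\star K/xK$ furnished by Theorem~\ref{pre-construction}. Write $D$ for the derivation that strongly twists $R$, put $A=S\cap\Ker D$ and $C=A\setminus\{0\}$ (so that $R$ is twisted along $C$ by $K$), and let $uV$ be the maximal ideal of the DVR $V$; since $V$ dominates $S$ we have $N\subseteq uV$, where $N$ denotes the maximal ideal of $S$. First I would fix the element $x$. By Corollary~\ref{connection}, $A\subseteq S$ is strongly analytic and so has TGF; since $S$ is dominated by a DVR it is not a field, so $N\neq 0$ and therefore $N\cap A\neq 0$, and I choose any $0\neq x\in N\cap A$. (Concretely: for $0\neq s\in N$, condition (c) of the strongly twisted definition gives $S\subseteq\Ker D+s^2S$, so $s=a+s^2\sigma$ with $a\in\Ker D$ and $\sigma\in S$; the element $x:=a=s(1-s\sigma)$ then lies in $A$, lies in $sS\subseteq N$, and is nonzero because $1-s\sigma$ is a unit of $S$.) In particular $x\in C$, $x\in M=N\cap R$, and $D(x)=0$.

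The crux is to exhibit a nonzero element of $K/xK$ killed by $N$. Set $m=v_V(x)\geq 1$, so $xV=u^mV$ and hence $xK=(xV)K=u^mK$, since $K$ is a $V$-module. I claim $K\neq uK$: otherwise $K=u^jK$, whence $u^{-j}K=K$, for all $j\geq 1$, and clearing denominators in a finite representation of an arbitrary element of $FK$ then shows $FK=\bigcup_j u^{-j}K=K$, contradicting $K\neq FK$. So I may pick $\kappa\in K\setminus uK$; let $\bar k$ be the image of $u^{m-1}\kappa$ in $K/xK=K/u^mK$. Then $\bar k\neq 0$, for $u^{m-1}\kappa=u^mk'$ would give $u^{m-1}(\kappa-uk')=0$ and hence $\kappa=uk'\in uK$ by torsion-freeness of $K$; and $N(u^{m-1}\kappa)\subseteq uV\cdot u^{m-1}\kappa=u^mV\kappa\subseteq u^mK=xK$, so $N\bar k=0$ in $K/xK$. (Alternatively one can invoke Lemma~\ref{tffr lemma}, which exhibits $K/xK$ as a nonzero free $V/u^mV$-module whose socle supplies such a $\bar k$.)

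To finish, since $x\in C$ Theorem~\ref{pre-construction} yields a ring isomorphism $\varphi:R/xR\to B:=S/xS\star K/xK$, which carries the maximal ideal $M/xR$ of the local ring $R/xR$ onto the maximal ideal $\mathcal M=(N/xS)\star(K/xK)$ of the local ring $B$. Consider $\eta=(0,\bar k)\in B$: for every $(\bar s,\bar y)\in B$ one has $(\bar s,\bar y)\eta=(0,\bar s\bar k)$, so $\mathrm{ann}_B(\eta)=(\mathrm{ann}_{S/xS}\bar k)\star(K/xK)$, and since $N/xS\subseteq\mathrm{ann}_{S/xS}(\bar k)\subsetneq S/xS$ with $S/xS$ local, this annihilator is exactly $\mathcal M$. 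Hence $\mathcal M\in\mathrm{Ass}_B(B)$, and taking $y\in R$ with $\varphi(y+xR)=\eta$ gives $(xR:_Ry)=M$, so $M$ is the associated prime of the nonzero principal ideal $xR$. I expect the only subtle point to be the choice of $x$: it must lie in $A$ (so that $x\in C$ and $D(x)=0$) yet be a nonzero nonunit of $S$ (so that $xR$ is a proper nonzero ideal and $v_V(x)\geq 1$), and that both requirements can be met simultaneously is exactly where the trivial generic fibre of $A\subseteq S$, and hence the strongly twisted hypothesis, is used.
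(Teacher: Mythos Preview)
Your proof is correct, but it takes a genuinely different route from the paper's. The paper argues directly at the level of fractional ideals: it picks $t\in M$ with $tV=MV$, observes that $K\subsetneq t^{-1}K$ (since $K\neq FK$), and then uses the correspondence of Theorem~\ref{correspondence} to find $s\in S$ with $D(s)\in t^{-1}K\setminus K$. A short Leibniz-rule computation gives $D(sM)\subseteq K + Mt^{-1}K = K$, so $sM\subseteq R$ while $s\notin R$; writing $s=a/b$ with $a,b\in R$ then exhibits $M=(bR:_Ra)$.

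Your argument instead passes through the analytic isomorphism $R/xR\cong S/xS\star K/xK$ of Theorem~\ref{pre-construction} and locates a socle element in the idealization. This is a perfectly natural line, and arguably makes more transparent \emph{why} the phenomenon occurs: in any idealization $A\star L$ with $(A,\mathfrak m)$ local, any nonzero element of $(0:_L\mathfrak m)$ gives an element of the ring annihilated by the maximal ideal, and here the $V$-module structure of $K$ guarantees such elements exist in $K/xK$. The paper's approach, on the other hand, is shorter, avoids choosing $x\in A$, and yields the slightly stronger conclusion $(R:_FM)\supsetneq R$ directly (i.e., $M^{-1}\ne R$), which is the ``divisorial'' flavor suggested by the label. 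Both proofs hinge on the same two ingredients: that $K$ is a $V$-module with $K\neq uK$, and that $N\subseteq uV$.
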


\begin{proof}   It suffices to exhibit an element $s \in (R:_F M)$ that is not in $R$, for then $M = R \cap s^{-1}R$ and the proposition follows.
 Let $t \in M$ such that $tV = MV$.  Observe that $tK \ne K$, for otherwise since $V$ is a DVR and $tV \ne V$, it follows that $K$ is a divisible $V$-module and hence $K = FK$, contrary to  assumption.
 Therefore,
$K \subsetneq t^{-1}K \subseteq FK$, and since $R = D^{-1}(K) \cap S$ and by Theorem~\ref{correspondence},  $R \ne D^{-1}(t^{-1}K) \cap S$,   there exists $s \in S$ such that $D(s) \in
t^{-1}K \setminus K$.  (Here, $D$ is the derivation that twists $R$.)  Now, since $D$ is a derivation, $K$ is a $V$-module and $MV = tV$, we have  $$D(sM) = sD(M) + MD(s) \subseteq K
+Mt^{-1}K = K.$$  Thus
$sM \subseteq D^{-1}(K) \cap S = R$, and we have $s \in (R:_FM)
\setminus R$, as claimed.  \end{proof}

If $A$ is a local Cohen-Macaulay ring of Krull dimension $d$, then an inequality due to Abhyankar in  \cite{AbhLocal} places a lower bound on the multiplicity $e(A)$ of $A$: $$e(A) \geq \embdim A - d+1.$$ To contrast this with the non-Cohen-Macaulay case, 
Abyhankar constructs in \cite{AbhLocal} for each pair of integers $n > d>1$ a local ring of  embedding dimension $n$, Krull dimension $d$ and  multiplicty $2$.  Example~\ref{new V example} can be used to accomplish something similar:

\begin{exmp} {\em
Let $n>d > 1$, and let $S = k[X_1,\ldots,X_d]_{(X_1,\ldots,X_d)}$ and  $V$ be as in Example~\ref{new V example}.  Let $K$  be a free $V$-module of rank $n$, and let $R$ be the subring of $S$ that is strongly twisted by $K$.  Then, as in the example, $R$ is a local Noetherian domain. Since $R \subseteq S$ is an integral extension, $R$ has Krull dimension $d$.  Moreover, as in Corollary~\ref{existence V cor}, the fact that $\widehat{R}$ is isomorphic as a ring to $ \widehat{S} \star \widehat{K}$ implies that $R$ has multiplicty $1$ and embedding dimension $d+n$.  Also, by Corollary~\ref{isolated cor}, $R$ has an isolated singularity, and by Proposition~\ref{divisorial}, the maximal ideal of $R$ is associated to a principal ideal of $R$. }\qed       
\end{exmp}











\section{Non-Noetherian rings}

Although our focus is mainly on the Noetherian case, we make in this section a  few remarks on twisted subrings of not-necessarily-Noetherian domains.
Specifically, we   characterize   the twisted subrings  of $S$,
where $S$  is
allowed to be either a Pr\"ufer domain, a Dedekind domain or a Krull
domain, and we see that various degrees of ``stability'' are necessitated
by such assumptions on $S$. Following Lipman \cite{Lipman} and Sally and Vasconcelos \cite{SV}, an ideal $I$ of a domain $R$ is {\it stable} if $I$ is projective over its ring of endomorphisms.  In case $R$ is quasilocal, $I$ is stable if and only if $I^2 = iI$ for some $i \in I$ \cite[Lemma 3.1]{OlStructure}.  A domain is {\it finitely stable} if every nonzero finitely generated ideal is stable; it is {\it stable} if every ideal is stable. 
We use the following two facts; the first  is due to Rush 
\cite[Theorem 2.3]{Rush1}, and the second, which can be found in \cite[Corollary 2.5]{OlClass},  is based on similar ideas.  

\smallskip

{(a)}
If $R$ is a finitely stable domain, then $R \subseteq
\overline{R}$ is a quadratic extension and $\overline{R}$ is a Pr\"ufer domain.
Conversely, if $R \subseteq S$ is a quadratic extension and $\overline{R}$ is a
Pr\"ufer domain such that at most two maximal ideals of $\overline{R}$ lie over
each maximal ideal of $R$, then $R$ is a finitely stable domain.

\smallskip

{(b)} 
 A  domain $R$  is one-dimensional and stable   if and only if  $R \subseteq \overline{R}$ is a quadratic extension;  $\overline{R}$ is a Dedekind domain; and
there are at most two maximal ideals of $\overline{R}$ lying over each maximal
ideal of $R$.

\smallskip

\begin{thm} \label{fs} Let $S$ be an integrally closed domain, and let $C$ be a multiplicatively closed subset of $S$.  Suppose that $R$ is a subring of $S$ that is twisted along $C$ by some $C$-torsion-free $S$-module.
 Then:

\begin{itemize}

\item[{(1)}]  $S$ is a Pr\"ufer
domain if and only if $R$ is a finitely stable domain.

\item[{(2)}]   $S$ is a Dedekind
domain if and only if   $R$ is a stable domain of Krull dimension
$1$.

\item[{(3)}]   $S$ is a Krull
domain if and only if  $S$ is the
intersection of its localizations at height $1$ prime ideals; the
set of height one prime ideals  of $R$ has finite character; and for
each such prime ideal $P$, $R_P$ is a stable domain.

\end{itemize}
\end{thm}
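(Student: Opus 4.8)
The plan is to exploit a single structural observation: since $S$ is integrally closed in its quotient field $F$, and $R\subseteq S$ is an integral extension with $R$ and $S$ sharing the quotient field $F$ (Theorem~\ref{pre-construction c}), the ring $S$ is precisely the integral closure $\overline{R}$ of $R$. Indeed $S\subseteq\overline{R}$ because $S$ is integral over $R$, while $\overline{R}$ is integral over $R$ hence over $S$, so $\overline{R}\subseteq\overline{S}=S$. Next, because $S$ is a domain, Theorem~\ref{prime correspond} says $\Spec(S)\to\Spec(R)$ is a bijection; in particular exactly one maximal ideal of $\overline{R}=S$ lies over each maximal ideal of $R$, and the bijection preserves heights (it is order-preserving in both directions and $R\subseteq S$ is integral). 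These facts — $\overline{R}=S$, a unique prime of $S$ of equal height over each prime of $R$, and $R\subseteq S$ quadratic (Theorem~\ref{pre-construction c}) — are the only inputs needed beyond facts (a) and (b) recorded just above the theorem.

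For (1) and (2) I would simply feed these into (a) and (b). If $S$ is Pr\"ufer, then $\overline{R}=S$ is Pr\"ufer, $R\subseteq\overline{R}$ is quadratic, and at most (in fact exactly) one maximal ideal of $\overline{R}$ lies over each maximal ideal of $R$; the converse half of fact (a) then gives that $R$ is finitely stable. Conversely, if $R$ is finitely stable, the first half of (a) gives that $\overline{R}=S$ is Pr\"ufer. The argument for (2) is identical, citing fact (b) in place of (a) and replacing ``Pr\"ufer/finitely stable'' by ``Dedekind/one-dimensional stable''; note that $\dim R=\dim S$ throughout, since $R\subseteq S$ is integral, so ``Krull dimension $1$'' matches on both sides.

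Part (3) is the substantive case, and I would verify it condition by condition against the standard criterion that an integrally closed domain $S$ is a Krull domain exactly when $S=\bigcap S_Q$ with $Q$ running over the height one primes, each $S_Q$ a DVR, and every nonzero element of $S$ lying in only finitely many height one primes. Condition (i) is the intersection representation verbatim. For finite character: the height one primes of $R$ and of $S$ correspond under contraction, and a nonzero $r\in R$ lies in a height one prime $P$ of $R$ iff it lies in the corresponding height one prime of $S$, so finite character for $S$ transfers to $R$; conversely, given $0\ne s\in S$, use that $S/R$ is $C$-torsion (Theorem~\ref{C connection}(1)) to choose $c\in C$ with $0\ne cs\in R$, and observe that any height one prime of $S$ containing $s$ contracts to a height one prime of $R$ containing $cs$, so finite character for $R$ transfers back to $S$. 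For the local condition, let $P$ be a height one prime of $R$ and $Q$ the unique prime of $S$ over it; the Spec bijection forces $S_P=S_Q$, so $R_P\subseteq S_Q$ is integral with $S_Q$ integrally closed, whence $S_Q=\overline{R_P}$, and $R_P\subseteq S_Q$ is quadratic because quadraticity localizes. Since $R_P$ is one-dimensional, fact (b) says $R_P$ is stable iff $\overline{R_P}=S_Q$ is Dedekind (the ``at most two maximal ideals'' hypothesis being automatic here) iff the local ring $S_Q$ is a DVR. Assembling these equivalences yields (3).

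The main obstacle I anticipate lies entirely in part (3): the localization bookkeeping — establishing $S_P=S_Q$ and $\overline{R_P}=S_Q$ from the Spec bijection and integral closedness, confirming that quadratic extensions localize, and running the finite-character transfer cleanly in both directions. None of this is deep, but it is where care is required; parts (1) and (2) are essentially immediate once one sees that $\overline{R}=S$ and that the prime correspondence forces a unique maximal ideal upstairs.
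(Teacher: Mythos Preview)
Your proposal is correct and follows essentially the same approach as the paper: identify $S=\overline{R}$ via Theorem~\ref{pre-construction c}, invoke the Spec bijection of Theorem~\ref{prime correspond} and quadraticity, then feed these into facts (a) and (b). The only notable variation is in (3), where the paper proves $S_{P\cap R}=S_P$ in the forward direction by using the Krull representation $S=\bigcap_Q S_Q$ with finite character, whereas you extract it directly from the Spec bijection; both work, and your finite-character transfer via $C$-torsion is in fact more careful than the paper's one-line assertion.
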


\begin{proof}
First note that
 by Theorem~\ref{pre-construction c},
  $R \subseteq S$ is a
quadratic extension and  $R$ and $S$ share the same quotient field.  
 By Theorem~\ref{C-bijection} every prime ideal of $R$ has a unique prime ideal of $S$ lying over it.   Moreover, since $R \subseteq S$ is integral and $S$ is integrally closed, the ring $S$ is the integral closure of $R$ in its quotient field.
Thus  to prove (1) we may apply
(a) above to obtain that $R$ is a finitely stable domain if and
only if $S$ is a Pr\"ufer domain.
 Moreover, by (b),
  $S$ is a Dedekind domain if and only if  $R$ is a stable
domain of Krull dimension $1$, and this proves (2).

To prove (3), observe first that since
 each height $1$ prime ideal of $R$ has a unique height $1$ prime ideal of $S$ lying over it, it follows that the set of height $1$ prime ideals of $S$ has finite character if and only if the set of height $1$ prime ideals of $R$ has finite character.
Suppose that $S$ is a Krull domain, and
 let $P$ be a height $1$ prime ideal of $S$.  We claim that $S_{P \cap R} = S_P$.  Indeed, since $S$ is a Krull domain, $S = \bigcap_{Q}S_Q$, where $Q$ ranges over the height $1$ prime ideals of $S$.  Since this intersection has finite character, it follows that $S_{P \cap R} = \bigcap_{Q} (S_Q)_{P \cap R}$.  Since $S_Q$ is a DVR and there is a unique prime ideal of $S$ lying over $P \cap R$, then $(S_Q)_{P \cap R}$ is the quotient field of $S$ for all $Q \ne P$.  Thus $S_{P \cap R} = S_P$, and from the fact that $R \subseteq S$ is a quadratic extension, we obtain that for each height $1$ prime ideal $P$ of $S$,
 $R_{P \cap R} \subseteq S_P$ is a quadratic
extension.
Therefore,  by (b) above, $R_{P
\cap R}$ is a stable domain.

Conversely, suppose that  $S$ is the
intersection of its localizations at height $1$ prime ideals; the
set of height 1 prime ideals  of $R$ has finite character; and for
each such prime $P$, $R_P$ is a stable domain.  Then, as we have already noted, the set of height $1$ prime ideals of $S$ has finite character, so it remains to show that $S_{P}$ is a DVR for each prime ideal $P$ of $S$.  Now by assumption $R_{P \cap R}$ is a stable domain of Krull dimension $1$, and hence by (b), the
 integral closure of $R_{P \cap R}$ in its quotient field is  a Dedekind domain.  But the quasilocal domain $S_P$, as an integrally closed overring of $R_{P \cap R}$, must contain this Dedekind domain and hence $S_P$ must be a DVR.
 Thus
$S$ is a Krull domain.
\end{proof}




As an example of how to apply Theorem~\ref{fs}, as well as Theorem~\ref{existence strongly twisted cor} (the theorem on the existence of strongly twisted subrings), we   build in Corollary~\ref{global stable example} a one-dimensional  stable
domain that has infinitely many maximal ideals $M_n$, each of which has a generating set of  prescribed size.   The existence of such rings is a consequence of a general fact, which we establish in Proposition~\ref{global stable}, regarding Dedekind domains that have a strongly twisted subring.  The proposition relies on the following technical observation.

\begin{lem} \label{construction}
Let $S$ be a domain with quotient field $F$, and let $K$ be a nonzero torsion-free $S$-module.  If $R$ is a subring of $S$ that is strongly twisted by $K$, then  for each nonzero prime ideal $P$ of $S$, the subring  $R_{P \cap R}$ of $S_P$   is strongly twisted by $K_P$.

\end{lem}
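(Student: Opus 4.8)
The plan is to twist $R_{P\cap R}$ by $K_P$ using the \emph{same} derivation $D\colon F\to FK$ that strongly twists $R$; the subtlety is that the three conditions defining ``strongly twisted'' do not pass naively to a localization of $R$ at $R\setminus P$, because the subring $\Ker D$ of $F$ need not be stable under dividing by an element of $R\setminus P$. The remedy is to work with a multiplicative set sitting inside $\Ker D$. So first I would set $A=S\cap\Ker D$ (so $D$ is $A$-linear and, by Corollary~\ref{connection}, $A\subseteq S$ is strongly analytic, hence has trivial generic fibre and is $C$-analytic for $C=A\setminus\{0\}$), put $\mathfrak p=P\cap A$ (nonzero, by TGF) and $W=A\setminus\mathfrak p$. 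Then $W$ is a multiplicative set of nonzero, hence nonzerodivisor, elements of $A$, and $W\subseteq A\subseteq\Ker D$; note also $W\cap P=\varnothing$, so $W\subseteq R\setminus(P\cap R)$ and $W\subseteq S\setminus P$.

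The next step is to identify $S_W$, $R_W$ and $K_W$ with the rings and module appearing in the statement. The inclusions $S_W\subseteq S_P$ and $R_W\subseteq R_{P\cap R}$ are clear from $W\subseteq S\setminus P$ and $W\subseteq R\setminus(P\cap R)$. For the reverse inclusion $S_P\subseteq S_W$: given $u\in S\setminus P$, the nonzero ideal $uS$ meets $C$ by TGF, so $uS=(uS\cap A)S$ by Proposition~\ref{basic cor}; if $uS\cap A\subseteq\mathfrak p$ this would force $uS\subseteq\mathfrak pS\subseteq P$, contradicting $u\notin P$, so there is $a\in(uS\cap A)\cap W$, and writing $a=u\sigma$ with $\sigma\in S$ exhibits $u$ as a unit of $S_W$. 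Hence $S_W=S_P$, and then $K_W=K\otimes_S S_W=K_P$. For $R_W=R_{P\cap R}$: $R\subseteq S$ is integral by Theorem~\ref{pre-construction c}, so $R_W\subseteq S_W$ is integral; given $u\in R\setminus(P\cap R)$ we have $u^{-1}\in S_W=S_P$, so $u^{-1}$ is integral over $R_W$, and multiplying an integral equation for $u^{-1}$ over $R_W$ by a suitable power of $u$ puts $u^{-1}\in R_W$.

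With these identifications, I would verify that $D$ strongly twists $R_W$ by $K_W$. Since $D$ is $A$-linear and $W\subseteq A$, the quotient rule gives $D(s/w)=D(s)/w$ for all $s\in S$, $w\in W$, and the three axioms become routine. Axiom (b) holds because $F\otimes_{S_P}K_P=FK$ and $D(F)$ already generates $FK$ over $F$. For (a): if $r\in R$ and $w\in W$ then $D(r/w)=D(r)/w\in K_W$ since $D(r)\in K$; conversely, if $y=s/w\in S_W$ has $D(y)=D(s)/w\in K_W$, then $D(s)\in K_W$, say $D(s)=k/w'$ with $k\in K$, $w'\in W$, whence $D(w's)=w'D(s)=k\in K$ and $w's\in S\cap D^{-1}(K)=R$, so $y=(w's)/(w'w)\in R_W$ (torsion-freeness of $K$ is used to pass from $D(s)=k/w'$ in $K_W$ to $w'D(s)=k$ in $FK$). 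For (c): any nonzero $x\in S_W$ satisfies $xS_W=sS_W$ for some nonzero $s\in S$; applying condition (c) of $R$ to $s$ writes any $t\in S$ as $t=k_0+s\sigma$ with $k_0\in S\cap\Ker D=A$ and $\sigma\in S$, and dividing by $w\in W$ gives $k_0/w\in A_W\subseteq\Ker D$ and $\sigma/w\in S_W$, so $S_W\subseteq\Ker D+sS_W=\Ker D+xS_W$.

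The only genuinely delicate point is the second paragraph: one must realize that the correct multiplicative set is $W=A\setminus(P\cap A)$ rather than $R\setminus P$, and then use the prime/ideal correspondence for the $C$-analytic extension $A\subseteq S$ (Proposition~\ref{basic cor}) together with the integrality of $R\subseteq S$ to see that localizing at $W$ produces exactly $S_P$, $R_{P\cap R}$ and $K_P$. Once that is done, everything else is a formal consequence of the $A$-linearity of $D$.
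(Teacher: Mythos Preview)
Your proof is correct and follows essentially the same strategy as the paper: set $A=S\cap\Ker D$, use Proposition~\ref{basic cor} and TGF to prove $S_P=S_{P\cap A}$, and then verify the three axioms for $D$ on the localized data. The only variation is in axiom (a): the paper computes $R_{P\cap R}=S_P\cap D^{-1}(K_P)$ directly via the quotient rule with denominators in $R\setminus P$, whereas you first prove $R_W=R_{P\cap R}$ from integrality of $R\subseteq S$ and then check $R_W=S_W\cap D^{-1}(K_W)$ using only $W$-linearity---a minor detour that is, if anything, a bit cleaner.
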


\begin{proof}  
 Let $D$ be the derivation that twists $R$, let $A = S \cap \Ker D$, and note that by Corollary~\ref{connection}, $A \subseteq S$ is a strongly analytic extension.  First we show that $S_P = S_{P \cap A}$, where the second localization is with respect to $A \setminus (P \cap A)$.  We need only verify that $S_P \subseteq S_{P
\cap A}$, since the reverse inclusion is clear. In fact, it suffices to verify that $s^{-1} \in S_{P \cap A}$ for each $s \in S \setminus P$.  To this end, let $s \in S \setminus P$.  Then $s^{-1} \in S_{P \cap A}$ if and only if $A \cap sS \not \subseteq P$.  If $A \cap sS \subseteq P$, then applying 
Proposition~\ref{basic cor}(1) we have  $sS = (A \cap sS)S \subseteq P$, contrary to the choice of $s$.  Hence $A \cap sS \not \subseteq P$, and the claim that $S_P  = A_{P \cap A}$ follows.

Next we claim that $R_{P \cap R} =
D^{-1}(K_{P}) \cap S_{P}$.  Let $r \in R$ and $b \in R \setminus P$.   Then since $R = D^{-1}(K) \cap S$, we have  $D(r/b) = (bD(r) - rD(b))/b^2 \in K_{P}$, so that $D(R_{P \cap R}) \subseteq K_{P}$.  Thus $R_{P\cap R} \subseteq D^{-1}(K_{P}) \cap S_{P}$.  To see that the reverse inclusion holds, suppose that
 $x \in S_{P}$  such that $D(x) \in K_{P}$.  By our above argument,
$S_{P} = S_{P \cap A}$, so
 there exist $s \in S$ and $c \in A \setminus (P \cap A)$ such that $x = \frac{s}{c}$.  By assumption $D(\frac{s}{c}) \in K_{P}$, and since $D$ is $A$-linear, we have $\frac{1}{c}D(s) = D(\frac{s}{c}) \in K_{P}$.  Thus,
since $c \not \in P$, we conclude $D(s) \in K$.  Since $R = S \cap D^{-1}(K)$, this implies that $s \in R$, and hence $x  = \frac{s}{c} \in R_{P \cap R}$.  This proves the claim  that $R_{P \cap R} =
D^{-1}(K_{P}) \cap S_{P}$.

Finally we claim that $R_{P \cap R}$ is strongly twisted by $K_{P}$.  Indeed, we have verified that $R_{P \cap R} = D^{-1}(K_{P}) \cap S_{P}$.  Also, since $R$ is strongly twisted by $K$, $D(F)$ generates $FK$ as an $F$-vector space.  Moreover, $S \subseteq \Ker D + sS$ for all $0 \ne s \in S$, and as noted above $S_{P} = S_{P \cap A}$, so since $A_{P\cap A} \subseteq \Ker D$ we have that $S_{P} \subseteq \Ker D + sS_{P}$ for all $0 \ne s \in S$.  Thus $R_{P \cap R}$ is strongly twisted by $K_{P}$.
\end{proof}

\begin{prop} \label{global stable}
Suppose that $S$ is a Dedekind domain with quotient field $F$ having countably many maximal ideals, and that $S$ has a subring that is strongly twisted by a torsion-free $S$-module of infinite rank.     If
 $\{e_n\}_{n=1}^\infty$ is a sequence for which  each $e_n \in  {\mathbb{N}} \cup \{\infty\}$,
 then
   there exists a subring $R$ of $S$ having countably many maximal ideals $M_1,M_2,\ldots$ such that:
\begin{itemize}
\item[{(1)}]
 $R$ is a   stable domain  having normalization $S$ and quotient field $F$.

\item[{(2)}]
For each $n>0$,  $M_n$ is  minimally generated by $e_n+1$ elements.

\item[{(3)}] If  each $e_n$ is finite, then $R$ is a Noetherian domain.

\end{itemize}
\end{prop}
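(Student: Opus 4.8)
The plan is to prescribe, by means of a carefully chosen twisting module, the local behavior of $R$ at each maximal ideal of $S$, and then to read off the stated properties from the structure theory of twisted subrings already in hand (Theorems~\ref{pre-construction c}, \ref{pre-construction}, \ref{fs}, \ref{Twisted Noetherian subrings} and Lemmas~\ref{more}, \ref{construction}, \ref{one case}, \ref{pre-construction 2}). First I build the module. Enumerate the maximal ideals of $S$ as $N_1,N_2,\dots$, and for each integer $i\ge 1$ set $L_i=\bigcap\{S_{N_n}:e_n\ge i\}$, with the empty intersection read as $F$. Each $L_i$ is an overring of $S$, and by the standard description of overrings of a Dedekind domain, $(L_i)_{N_n}=S_{N_n}$ if $e_n\ge i$ and $(L_i)_{N_n}=F$ if $e_n<i$. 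Let $K=\bigoplus_{i\ge 1}L_i$, a torsion-free $S$-module with $\rank K=\aleph_0$. Then $K/N_nK=\bigoplus_i L_i/N_nL_i$ and $\dim_{S/N_n}(L_i/N_nL_i)$ is $1$ when $e_n\ge i$ and $0$ otherwise, so $\dim_{S/N_n}(K/N_nK)=\#\{i:e_n\ge i\}=e_n$, i.e. $\mu_{S_{N_n}}(K_{N_n})=e_n$ for every $n$. Since $S$ has a subring strongly twisted by a torsion-free $S$-module of infinite rank and $\rank K=\aleph_0$ is at most that rank, Lemma~\ref{more} produces a derivation $D$ with $R:=S\cap D^{-1}(K)$ strongly twisted by $K$; write $A=S\cap\Ker D$ and $C=A\setminus\{0\}$.

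For (1): by Theorem~\ref{pre-construction c}, $R\subseteq S$ is a quadratic, hence integral, extension with common quotient field $F$, and since $S$ is integrally closed, $S=\overline{R}$. Strongly twisted implies twisted along $C$ by the $C$-torsion-free module $K$, so Theorem~\ref{fs}(2) applies; as $S$ is Dedekind, $R$ is a stable domain of Krull dimension $1$. Because $A\subseteq S$ is strongly analytic (Corollary~\ref{connection}), every prime of $S$ meets $C$, so by Theorem~\ref{C-bijection} the contraction $\Spec(S)\to\Spec(R)$ is a bijection carrying maximal ideals to maximal ideals; with $M_n=N_n\cap R$ we get $\Max(R)=\{M_1,M_2,\dots\}$, which is countable. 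For (3): if every $e_n$ is finite, then for $0\ne a\in S$ the module $K/aK=\bigoplus_i L_i/aL_i$ has each $L_i/aL_i$ of finite length (since $a$ lies in only finitely many maximal ideals of $S$) and $L_i/aL_i=0$ once $i>\max\{e_n:a\in N_n\}$, so $K/aK$ is a finitely generated $S$-module; hence $R$ is Noetherian by Theorem~\ref{Twisted Noetherian subrings}.

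For (2), fix $n$. By Lemma~\ref{construction}, $R_{M_n}$ is a subring of the DVR $V_n:=S_{N_n}$ strongly twisted by $K_{N_n}$, so by Theorem~\ref{pre-construction} the map $r\mapsto(r,D(r))$ is an analytic isomorphism $R_{M_n}\to V_n\star K_{N_n}$ along $C_n:=(V_n\cap\Ker D)\setminus\{0\}$. Choose $c$ in $C_n$ and in the maximal ideal of $R_{M_n}$, and replace $c$ by $c^2$, so that $c\in M_n^2R_{M_n}$ and hence $c\in\mathfrak{n}_n^2$, where $\mathfrak{n}_n$ is the maximal ideal of $V_n$. The induced isomorphism $R_{M_n}/cR_{M_n}\cong V_n/cV_n\star K_{N_n}/cK_{N_n}$ carries $M_nR_{M_n}$ onto the maximal ideal (Lemma~\ref{one case}), and reducing modulo squares gives
$$\dim_{R/M_n}(M_n/M_n^2)\;=\;\dim_{k_n}\bigl(\mathfrak{n}_n/\mathfrak{n}_n^2\oplus K_{N_n}/\mathfrak{n}_nK_{N_n}\bigr)\;=\;1+\mu_{V_n}(K_{N_n})\;=\;1+e_n,$$
where $k_n=R/M_n$. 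When $e_n$ is finite, Lemma~\ref{pre-construction 2} (with $M_nR_{M_n}=\mathfrak{n}_n\cap R_{M_n}$ and $K_{N_n}/cK_{N_n}$ finitely generated over $V_n$) shows $M_n$ is finitely generated, so it is minimally generated by exactly $e_n+1$ elements; when $e_n=\infty$ the display shows $M_n$ is not finitely generated and $M_n/M_n^2$ is infinite-dimensional. This proves (2).

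The main obstacle is the module construction in the first step: one must realize a single torsion-free $S$-module whose fiber dimensions at all countably many maximal ideals of $S$ are prescribed simultaneously, and it is essential that $K$ be far from finitely generated, since a finitely generated torsion-free module over a Dedekind domain has the same local number of generators at every maximal ideal. The overring modules $L_i$ are exactly what supplies the needed local divisibility—the $F$-coordinates at the primes where fewer generators of $M_n$ are wanted. Everything after that is routine manipulation inside the analytic-isomorphism/idealization framework already developed, the only delicate point being the not-finitely-generated case in (2), where ``minimally generated by $e_n+1$ elements'' is understood through $\dim_{R/M_n}(M_n/M_n^2)$.
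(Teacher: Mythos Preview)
Your argument follows the same strategy as the paper's: build a torsion-free $S$-module $K$ whose fiber dimension at $N_n$ is $e_n$, invoke Lemma~\ref{more} to get a strongly twisted subring, and read off (1) and (3) from Theorems~\ref{fs} and~\ref{Twisted Noetherian subrings}. Your module $K=\bigoplus_{i\ge 1}L_i$ with $L_i=\bigcap_{e_n\ge i}S_{N_n}$ is a genuinely different (and rather elegant) choice from the paper's $K=\bigoplus_t\bigoplus_{i=1}^{e_t}S_{N_t}$; both give $\dim_{S/N_n}(K/N_nK)=e_n$, which is all that matters.

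There is, however, a real gap in your proof of (2) when $e_n$ is finite. You correctly establish $\dim_{R/M_n}(M_n/M_n^2)=e_n+1$ via the analytic isomorphism (the paper does this with completions, but your route is fine), and you show $M_nR_{M_n}$ is finitely generated. But $\dim_{R/M_n}(M_n/M_n^2)$ is only a \emph{lower} bound for $\mu_R(M_n)$; in a non-local ring it need not equal the global minimal number of generators (already in a Dedekind domain a non-principal maximal ideal $N$ has $\dim N/N^2=1$ but $\mu(N)=2$). Your appeal to Lemma~\ref{pre-construction 2} is local---it gives $\mu_{R_{M_n}}(M_nR_{M_n})=e_n+1$, not $\mu_R(M_n)=e_n+1$---and a global application would require $K/cK$ to be finitely generated for some $c\in M_n\cap A$, which can fail if $c$ happens to lie in some $N_m$ with $e_m=\infty$.

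The missing ingredient is a local-to-global principle for numbers of generators. The paper invokes exactly this: since every nonzero ideal of $R$ lies in only finitely many maximal ideals (via the bijection $\Spec(S)\to\Spec(R)$), Matlis's theorem \cite[Theorem~26, p.~35]{M2} says an ideal can be generated by $k$ elements, $k\ge 2$, if and only if it can be so generated locally. Since $M_nR_{M_m}=R_{M_m}$ for $m\ne n$ and $M_nR_{M_n}$ needs $e_n+1$ generators, this yields $\mu_R(M_n)=e_n+1$. Add this step and your proof is complete.
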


\begin{proof}
List the  maximal ideals of $S$ as  $N_1,N_2,\ldots$, and for each
$t \geq 1$, define $K_t = \bigoplus_{i=1}^{e_t} S_{N_t}$. Then
define $K = \bigoplus_{t=1}^\infty K_t$.  By Lemma~\ref{more}, $K$ is a strongly twisting module for $S$.
%
Let $D:F \rightarrow FK$ be the corresponding derivation that twists $R := S \cap D^{-1}(K)$.  Let $A =  S \cap \Ker D$.
Then for each $0 \ne a \in A$, since
$a$ is contained in at most finitely many of the $N_i$'s, there
exist positive integers $t_1,t_2,\ldots,t_m$ such that  $K/aK
\cong K_{t_1}/aK_{t_1} \oplus \cdots \oplus K_{t_m}/aK_{t_m}$ as
$S$-modules.  For each $t_i$, since $S_{N_{t_i}}$ is a DVR, it follows that
$S_{N_{t_i}}/aS_{N_{t_i}}$ is a cyclic $S$-module.  (This is because for each maximal ideal $N$ of $S$ and $k>0$, $S_N = S + N^kS_N$, so that since $S_N$ is a DVR, $S_N = S + aS_N$ for each $0 \ne a \in S$.)
  Thus if $e_{n}$ is finite for all
$n>0$, then
 $K/aK$ is a finite $S$-module.
Therefore, in this case by Theorem~\ref{Twisted Noetherian subrings}, $R$ is a
Noetherian domain, proving (3).   Also, regardless of whether all the $e_n$'s are
finite,  Theorem~\ref{fs} implies that $R$ is a  stable domain  with quotient field $F$ and normalization $S$, and this proves (1).

To prove (2),
for each $n$, let $M_n = N_n \cap R$. Then
each $M_n$ is a maximal ideal of $R$, and since $R \subseteq S$ is integral, every maximal ideal of $R$ is accounted for in this way. Fix $n$, and to simplify notation, let $M = M_n$ and $N = N_n$.   By Lemma~\ref{construction}, $R_{M}$ is a subring of $S_{N}$ that is  strongly twisted by $K_{N}$.  If the maximal ideal of $R_M$ is finitely generated, so that $R_M$ is a Noetherian domain, then since by Theorem~\ref{pre-construction completion},
 $(R_M)^{\widehat{\:}} \cong (S_N)^{\widehat{\:}} \star (K_N)^{\widehat{\:}}$ and $S_N$ is a DVR, the embedding dimension of $R_M$ is given by the following calculation (recall our notation $M = M_n$ and $N = N_n$):
\begin{eqnarray*}
 \embdim {R_{M}} & = &  1 + \dim_{S_{N}/NS_{N}} K_{N}/NK_{N} \\
 \: & = & 1 + \dim_{S_N/NS_N} K_n/NK_n \: \: = \: \: 1 + e_n.
 \end{eqnarray*}
 Thus for each $n$, either $R_{M_n}$ is a non-Noetherian ring or $R_{M_n}$ is Noetherian and its maximal ideal can be generated by $e_n+1$ but no fewer elements.
  Since every nonzero ideal of $R$ is contained in at most finitely many maximal ideals of $R$, an ideal of $R$ can be generated by $k$ elements, with $k\geq 2$, if it can  can be locally generated by $k$ elements \cite[Theorem 26, p.~35]{M2}.
Therefore, if $M_n$ is finitely generated, it  can be minimally generated by   $e_n+1$ elements.  This proves (2).
\end{proof}

\begin{cor}  \label{global stable example}
Assume that: \index{twisted subring!of characteristic $p$}
\begin{itemize}
\item[{(a)}]
  $k$ is a countable field of prime characteristic  that is a separably generated extension of infinite transcendence degree over a subfield, and

\item[{(b)}]  $\{e_n\}_{n=1}^\infty$ is a sequence for which  each $e_n \in  {\mathbb{N}} \cup \{\infty\}$.

\end{itemize}
Then there exists a subring $R$ of $k[X]$ having quotient field $k(X)$ such that $R$ is a stable domain with normalization $k[X]$ and the set of maximal ideals of $R$ can be written $\{M_1,M_2,\ldots\}$, where for each $n$, $M_n$ is minimally generated by $e_n+1$ elements.
\end{cor}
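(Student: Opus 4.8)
The plan is to deduce the corollary directly from Proposition~\ref{global stable} applied to the Dedekind domain $S = k[X]$; all that is needed is to check the hypotheses of that proposition, namely that $k[X]$ is a Dedekind domain with quotient field $k(X)$ and countably many maximal ideals, and that $k[X]$ has a subring strongly twisted by a torsion-free $k[X]$-module of infinite rank.

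First I would dispense with the easy hypotheses. The ring $S = k[X]$ is a principal ideal domain, hence Dedekind and integrally closed, with quotient field $F = k(X)$; its maximal ideals correspond to the monic irreducible polynomials over $k$, and since $k$ is countable there are only countably many (and, by Euclid's argument, infinitely many) of these, so $S$ has countably infinitely many maximal ideals.

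Next I would produce the required twisting module. By assumption (a) there is a subfield $k_0$ of $k$ such that $k/k_0$ is separably generated of infinite transcendence degree; since $k$ is countable, $k_0$ is at most countable, and since $k$ has prime characteristic, so does $k_0$. Adjoining the transcendental element $X$ preserves these features of the extension: if $\{t_i\}$ is a separating transcendence basis of $k/k_0$, then $\{t_i\}\cup\{X\}$ is one of $k(X)/k_0$, so $k(X)/k_0$ is separably generated of infinite transcendence degree. Since $k[X]$ is a $k_0$-subalgebra of $k(X)$ with quotient field $k(X)$, Theorem~\ref{existence strongly twisted cor} (with base field $k_0$) applies: taking $K = \bigoplus_{n\geq 1} k[X]$, a torsion-free $k[X]$-module of countably infinite rank, there is a subring of $k[X]$ strongly twisted by $K$. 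Thus $k[X]$ has a subring strongly twisted by a torsion-free module of infinite rank.

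Finally, with these hypotheses in hand, I would invoke Proposition~\ref{global stable} with $S = k[X]$ and the given sequence $\{e_n\}$: it yields a subring $R$ of $k[X]$ with countably many maximal ideals $M_1, M_2, \ldots$ such that $R$ is a stable domain, $R$ has quotient field $k(X)$ and normalization $k[X]$, and each $M_n$ is minimally generated by $e_n+1$ elements, which is precisely the statement of the corollary. I do not expect any serious obstacle here, since Proposition~\ref{global stable} (and, behind it, Theorems~\ref{existence strongly twisted cor} and~\ref{fs}) does all the real work; the one point that deserves a line of care is verifying that passing from $k$ to $k(X)$ keeps the extension over $k_0$ separably generated of infinite transcendence degree, which is what legitimizes the appeal to Theorem~\ref{existence strongly twisted cor}.
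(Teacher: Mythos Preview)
Your proposal is correct and follows essentially the same approach as the paper, which simply observes that $k[X]$ is a PID with countably many maximal ideals and then invokes Theorem~\ref{existence strongly twisted cor} and Proposition~\ref{global stable}. You supply the details the paper leaves implicit, in particular the verification that $k(X)/k_0$ remains separably generated of infinite transcendence degree so that Theorem~\ref{existence strongly twisted cor} applies.
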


\begin{proof}  Since $k$ is countable, $k[X]$ is a PID having countably many maximal ideals.  Therefore, we may apply Theorem~\ref{existence strongly twisted cor} and
Proposition~\ref{global stable} to obtain a stable
subring $R$ of $S$ whose maximal ideals behave accordingly.
\end{proof}

 The proposition and its corollary  concern one-dimensional non-local twisted subrings.   The one-dimensional local case is treated extensively in \cite{OlbAR}, while more on local stable domains can be found in \cite{OlbGFF}.

\end{document}